\newtheorem{theorem}{Theorem}[section]
\newtheorem{lemma}[theorem]{Lemma}
\newtheorem{proposition}[theorem]{Proposition}
\newtheorem{corollary}[theorem]{Corollary}
\newtheorem{remark}{Remark}[section]
\newtheorem{example}{Example}[section]
\newtheorem{assumption}{Assumption}
\let\originalleft\left
\let\originalright\right
\renewcommand{\left}{\mathopen{}\mathclose\bgroup\originalleft}
\renewcommand{\right}{\aftergroup\egroup\originalright}
\newcommand{\paren}[2][a]{%
\IfEqCase{#1}{%
{a}{\left(#2\right)}%
{0}{(#2)}%
{1}{\big(#2\big)}%
{2}{\Big(#2\Big)}%
{3}{\bigg(#2\bigg)}%
{4}{\Bigg(#2\Bigg)}%
}[\PackageError{paren}{Undefined option to paren: #1}{}]%
}
\newcommand{\norm}[2][a]{%
\IfEqCase{#1}{%
{a}{\left\lVert#2\right\rVert}%
{0}{\lVert#2\rVert}%
{1}{\big\lVert#2\big\rVert}%
{2}{\Big\lVert#2\Big\rVert}%
{3}{\bigg\lVert#2\bigg\rVert}%
{4}{\Bigg\lVert#2\Bigg\rVert}%
}[\PackageError{norm}{Undefined option to norm: #1}{}]%
}
\newcommand{\brac}[2][a]{%
\IfEqCase{#1}{%
{a}{\left[#2\right]}%
{0}{[#2]}%
{1}{\big[#2\big]}%
{2}{\Big[#2\Big]}%
{3}{\bigg[#2\bigg]}%
{4}{\Bigg[#2\Bigg]}%
}[\PackageError{brac}{Undefined option to brac: #1}{}]%
}
\newcommand{\inner}[2][a]{%
\IfEqCase{#1}{%
{a}{\left\langle#2\right\rangle}%
{0}{\langle#2\rangle}%
{1}{\big\langle#2\big\rangle}%
{2}{\Big\langle#2\Big\rangle}%
{3}{\bigg\langle#2\bigg\rangle}%
{4}{\Bigg\langle#2\Bigg\rangle}%
}[\PackageError{inner}{Undefined option to inner: #1}{}]%
}
\newcommand{\abs}[2][a]{
\IfEqCase{#1}{%
{a}{\left\vert#2\right\rvert}%
{0}{\vert#2\rvert}%
{1}{\big\vert#2\big\rvert}%
{2}{\Big\vert#2\Big\rvert}%
{3}{\bigg\vert#2\bigg\rvert}%
{4}{\Bigg\vert#2\Bigg\rvert}%
}[\PackageError{abs}{Undefined option to abs: #1}{}]%
}
\newcommand{\set}[2][a]{
\IfEqCase{#1}{%
{a}{\left\{#2\right\}}%
{0}{\{#2\}}%
{1}{\big\{#2\big\}}%
{2}{\Big\{#2\Big\}}%
{3}{\bigg\{#2\bigg\}}%
{4}{\Bigg\{#2\Bigg\}}%
}[\PackageError{set}{Undefined option to set: #1}{}]%
}
\newcommand{\pipe}[1][a]{
\IfEqCase{#1}{%
{a}{\middle|}%
{0}{|}%
{1}{\big|}%
{2}{\Big|}%
{3}{\bigg|}%
{4}{\Bigg|}%
}[\PackageError{set}{Undefined option to set: #1}{}]%
}
\newcommand{\R}{\mathbb{R}}
\newcommand{\1}{1}
\newcommand{\Cov}{\mbox{Cov}}
\renewcommand{\P}{\mathbb{P}}
\newcommand{\E}{\mathbb{E}}
\newcommand{\telque}{\,:\,}
\newcommand{\TDP}{\mathrm{TDP}}
\newcommand{\FDR}{\mathrm{FDR}}
\newcommand{\BH}{\mathrm{BH}}
\newcommand{\FDP}{\mathrm{FDP}}
\newcommand{\FCP}{\mathrm{FCP}}
\newcommand{\bp}{\mathbf{p}}
\newcommand{\mtc}{\mathcal}
\newcommand{\wt}[1]{{\widetilde{#1}}}
\newcommand{\wh}[1]{{\widehat{#1}}}
\newcommand{\cC}{{\mtc{C}}}
\newcommand{\cH}{{\mtc{H}}}
\newcommand{\cR}{{\mtc{R}}}
\newcommand{ \cadlag }{c\`adl\`ag }
\definecolor{mulberry}{rgb}{0.77, 0.29, 0.55}
\definecolor{blendedblue}{rgb}{0.2,0.2,0.7}
\newcommand{\stkout}[1]{\ifmmode\text{\sout{\ensuremath{#1}}}\else\sout{#1}\fi}
\definecolor{mygreen}{rgb}{0.82, 1.0, 0.82}
\definecolor{myred}{rgb}{ 1.0, 0.84, 0.84}
\newcommand{\Pc}[3][a]{\P\brac[#1]{#2\pipe[#1] #3}}
\newcommand{\Ec}[3][a]{\E\brac[#1]{#2\pipe[#1] #3}}
\newcommand{\dd}{\operatorname{d}}
\newcommand{\range}[1]{\llbracket #1\rrbracket}
\newcommand{\U}{\mathbb{U}}
\newcommand{\Z}{\mathbb{Z}}             
\newcommand{\N}{\mathbb{N}}              
\newcommand{\K}{\mathbb{K}}              
\renewcommand{\P}{\mathbb{P}} 
\newcommand{\T}{\mathcal{T}}            
\newcommand{\Loi}{\mathcal{L}}
\newcommand{\Nor}{\mathcal{N}}
\newcommand{\V}{\mathbb{V}}
\renewcommand{\1}[1]{{\mathds{1}_{#1}}}
\renewcommand{\geq}{\geqslant}
\renewcommand{\leq}{\leqslant}
\newcommand{\cvloi}{\overset{\Loi}{\underset{\tau_{n,m}\rightarrow +\infty}{\longrightarrow}}}
\newcommand{\W}{\mathbb{W}}
\newcommand{\dcal}{\mathcal{D}_{{\tiny \mbox{cal}}}}
\newcommand{\dtest}{\mathcal{D}_{{\tiny \mbox{test}}}}
\newcommand{\dtrain}{\mathcal{D}_{{\tiny \mbox{train}}}}
\newcommand{\Pcal}{P_{\tiny \mbox{cal}}}
\newcommand{\Fcal}{F_{\tiny \mbox{cal}}}
\newcommand{\Gncal}[1]{\wh{F}_{\tiny \mbox{cal}}^{#1}}
\newcommand{\Ptest}{P_{\tiny \mbox{test}}}
\newcommand{\Ftest}{F_{\tiny \mbox{test}}}
\newcommand{\Fmtest}{\wh{F}_{m,\tiny \mbox{test}}}
\newcommand{\Fcalcag}{F_{\tiny \mbox{cal}-}}
\newcommand{\Ftestcalcag}{{\widetilde{F}_{\tiny \mbox{test,cal}}}}
\newcommand{\Flimcag}{\widetilde{G}_{{\tiny\mbox{mixt}}}}
\newcommand{\Vwcag}{\widetilde{V^w}_{\tiny \mbox{cal}}}
\newcommand{\p}[2]{{p_{#1}^{(#2)}}}
\newcommand{\pw}[2]{{p_{#1}^{w,(#2)}}}
\newcommand{\Fm}[2]{\wh{G}_{#1}^{(#2)}}
\newcommand{\Fmw}[2]{\wh{G}_{#1}^{w,(#2)}}
\newcommand{\FCPm}{\mathrm{FCP}_m^{(n)}}
\newcommand{\FCPmw}{\mathrm{FCP}_m^{w,(n)}}
\newcommand{\FDPm}{\mathrm{FDP}_m^{(n)}}
\newcommand{\TDPm}{\mathrm{TDP}_m^{(n)}}
\newcommand{\pwo}[2]{{p_{#1}^{w^*,(#2)}}}
\newcommand{\Fmwo}[2]{\wh{G}_{#1}^{w^*,(#2)}}
\newcommand{\FCPmwo}{\mathrm{FCP}_m^{w^*,(n)}}
\newcommand{\FDPmw}{\mathrm{FDP}_m^{(n)}}
\newcommand{\TDPmw}{\mathrm{TDP}_m^{(n)}}
\newcommand{\F}[1]{\wh{{G}}_{m,{#1}}^{(n)}}
\newcommand{\Fw}[1]{\wh{{G}}_{m,{#1}}^{w,(n)}}
\newcommand{\Fwo}[1]{\wh{{G}}_{m,{#1}}^{w^*,(n)}}
\newcommand{\Flim}{G_{{\tiny\mbox{mixt}}}}
\newcommand{\Flimw}{{G_{{\tiny\mbox{mixt}}}^{w^*}}}
\newcommand{\Flimwnull}{{G_{{\tiny\mbox{mixt}}}^{w}}}
\newcommand{\Zw}{\Z^w}
\newcommand{\Zwo}{\Z^{w^*}}
\newcommand{\Wcal}{{F^w_{{\tiny\mbox{cal}}}}}
\newcommand{\Wcalcag}{{\widetilde{F^w_{{\tiny\mbox{cal}}}}}}
\newcommand{\Wcalo}{{F^{w^*}_{{\tiny\mbox{cal}}}}}
\newcommand{\rw}{{\rho^{w}}}
\newcommand{\rwo}{{\rho^{w^*}}}
\newcommand{\Ftestcal}{{F_{\tiny \mbox{test,cal}}}}
\newcommand{\Fzerocal}{{F_{\tiny \mbox{$0$,cal}}}}
\begin{document}

\begin{frontmatter}
\title{Asymptotics for conformal inference}

\runtitle{Asymptotics for conformal inference}

\begin{aug}

\author[A]{ \fnms{Ulysse}~\snm{Gazin}\ead[label=e1]{ugazin@lpsm.paris}}
\runauthor{Gazin }
\address[A]{Université Paris Cité and Sorbonne Université, CNRS, Laboratoire de Probabilités, Statistique et Modélisation, F-75013 Paris, France\printead[presep={,\ }]{e1}
}

\end{aug}


\begin{abstract}
Conformal inference is a versatile tool for building prediction sets in regression or classification.  
We 
study the false coverage proportion (FCP) in a simultaneous inference setting with a calibration sample of $n$ points and a test sample of $m$ points. We identify the exact, distribution-free, asymptotic distribution of the FCP when both $n$ and $m$ tend to infinity. This shows in particular that FCP control can be achieved by using the well-known Kolmogorov distribution, and puts forward that the asymptotic variance is decreasing in the ratio $n/m$. We then provide a number of extensions by considering the problems of novelty detection, weighted conformal inference or distribution shift between the calibration sample and the test sample. In particular, our asymptotic results allow to accurately quantify the asymptotic behavior of the errors (a miscovering interval or declaring a false novelty) when weighted conformal inference is used.
\end{abstract}
\begin{keyword}
\kwd{Conformal prediction}
\kwd{empirical cumulative distribution function}
\kwd{functional central limit theorem}
\kwd{functional delta method}
\kwd{multiple testing}
\kwd{weighted conformal prediction}
\end{keyword}

\end{frontmatter}


\section{Introduction}

\subsection{Background}

In classical statistics, producing prediction sets for outcomes often relies on strong model assumptions. Recent advances involve complex data sets and sophisticated machine learning methods, for which such an approach is not appropriate. One recent solution is given by conformal prediction \citep{saunders1999transduction,vovk2005algorithmic,angelopoulos2021gentle} which consists in calibrating the prediction set according to an appropriate quantile of a calibration/training sample.
Strikingly, this method provides a finite-sample valid coverage (i.e. valid for any size $n\geq 1$ of the calibration sample) for any underlying distribution of the data and for any underlying point-prediction machine learning algorithm. Similar techniques can be employed for the novelty detection task \citep{balasubramanian2014conformal,bates2023testing,marandon2022machine}. 

\subsection{Aim and contributions}

We consider here the
multiple
setting \citep{vovk2013transductive}, where it is given 
a training dataset (which is considered as fixed here), a calibration sample of $n$ points $(X_{1},Y_{1})$, $\dots$, $(X_{n},Y_{n})$ and a test sample of $m$ points $(X_{n+1},Y_{n+1}),$ $\dots$, $(X_{n+m},Y_{n+m})$. While the calibration sample is fully observed, the $Y_i$'s of the test points are not observed and a prediction set  should be provided for each of them. The false coverage proportion (FCP) for the $m$ conformal prediction sets $\mathcal{C}^{\alpha}(X_{n+1}), \dots, \mathcal{C}^{\alpha}(X_{n+m})$ (see below for a formal definition) is defined as the proportion of coverage errors among the test sample:
$$
 \FCP_m^{(n)}(\alpha)\coloneqq\frac{1}{m}\sum_{i\in\range{m}}\1{Y_{n+i}\notin\mathcal{C}^\alpha(X_{n+i})}.
$$
Under standard assumptions 
and when the conformal prediction sets come from the inductive/split conformal procedure the distribution of the process $\FCP_m^{(n)}$ has been shown to be distribution-free, in the sense that it does only depend on $n$ and $m$ \citep{f2023universal,huang2024uncertainty,gazin2023transductive}. 
Due to the dependence between the individual coverage errors, combinatorial formulas derived in those references are particulary complex in $m$ and $n$.
Nevertheless, focusing on the maximum absolute deviation 
\begin{equation}\label{equ-diffinfty}
\| \FCP_m^{(n)}- I_n\|_\infty := \sup_{\alpha\in [0,1]} |\FCP_m^{(n)}(\alpha)- I_n(\alpha)|,
\end{equation}
with $I_n(\alpha)=\lfloor (n+1)\alpha\rfloor /(n+1)$, a DKW-type concentration inequality has been derived in \cite{gazin2023transductive}, which is both simple and finite-sample valid. It explicitly involves a rate $\tau_{n,m}^{1/2}$ with
\begin{equation}\label{eq:taunm}
\tau_{n,m}\coloneqq\frac{nm}{n+m}\in\brac{\frac{n\wedge m}{2},n\wedge m}.
\end{equation} 
 However, this DKW inequality is conservative in general (see Figure~\ref{fig:intro} below), which makes the corresponding FCP control stringent.

The aim of this paper is to complement the above studies by analyzing  $\FCP_m^{(n)}$ from an asymptotic point of view, where {\it both}  $m$ and $n$ tend to infinity 
which is equivalent to the convergence of $\tau_{n,m}$ to infinity. This asymptotic study will also cover the case of a distribution shift i.e. when the exchangeability assumption does not hold thus  extending the analysis of \cite{gazin2023transductive} and more broadlythe usual framework of conformal inference. We also study, in the same asymptotic regime, the asymptotic of the Benjamini-Hochberg procedure when conformal $p$-values are used \citep{bates2023testing}, which  complement the non-asymptotic work of \cite{bates2023testing,gazin2023transductive} among others. We refer to Section~\ref{sec:relationtopreviouswork} for more details on the relation of this work with previous studies. 
More precisely, our contributions are the following ones:
\begin{enumerate}
\item We show that under standard assumptions $\FCP_m^{(n)}$ converges uniformly to the nominal value at rate $\tau_{n,m}^{1/2}$ and that the asymptotic covariance process is a standard Brownian bridge  (Theorem~\ref{thr:BB}). Compared to the ``oracle''  case where $n=\infty$, it means that the variance is inflated by a factor asymptotically  equivalent to $(n+m)/n$, for instance $2$ in the case where $n\sim m$. 
\item A direct corollary of this result is that $\tau_{n,m}^{1/2}\| \FCP_m^{(n)}- I_n\|_\infty$ converges to the well known Kolmogorov distribution, that is, the distribution with  c.d.f. $x\mapsto (1-2\sum_{k\geq 1} (-1)^k e^{-2 k^2x^2})\1{x\geq 0}$. A comparison between the quantiles of this distribution, those given by empirical simulations of $\FCPm$ or those from the Conformal-DKW is provided in Figure~\ref{fig:intro}. While its validity is only asymptotic with $n,m\to +\infty$, it appears that the Kolmogorov quantile is simple and more accurate than the quantile obtained from DKW. Hence, our limit theorem allows to get simple, accurate and asymptotically-valid confidence bounds for the FCP process as well as asymptotic approximations of all quantities related to the distribution of the $\FCP$ process.
\item We then extend this result to the case where the distribution of the calibration sample is not equal to the test sample, that is, under a {\it distribution shift} (Theorem~\ref{thr:cvAlter}). As expected, the convergence of the FCP is not towards the nominal level in this case but rather towards a new term $G$ that takes this shift into account. 
The asymptotic covariance process is also modified according to $G$ and an explicit formula is given. The role of the ratio $n/(n+m)$ (number of calibration points divided by the total number of points)  clearly appears in the limit and indicates  the existence of three asymptotic regimes: under, proportional and over calibrated, see Table~\ref{tab:3Regime}.
\item To recover the appropriate nominal level in the limit, we adopt the weighted conformal approach \citep{tibshirani2019conformal,barber2023conformal} with specific weights that rely on the data distribution, that we refer to as {\it oracle weights}. The obtaned central limit theorem shares similarities with the exchangeable case described above, with the essential difference that the asymptotic covariance process is not distribution free, and depends on the sample distributions (Theorem~\ref{thr:BBWeight}).
\item We also obtain a convergence result in case of non-oracle weights (Theorem~\ref{thr:cvAlterWeight}), which is crucial to quantify the FCP asymptotic behavior in the difficult but realistic case where the user has not access to the true distribution shift. An illustration is displayed in Figure~\ref{fig:IlluNonOracle}, where the asymptotic confidence interval for the FCP is given as a function of an error $\Delta$, measuring how the used weights deviate from the oracle ones. This puts forward that the FCP gets significantly away from $\alpha$ when $\Delta$ is above $\approx 0.133$ or below $\approx -0.11$ in this framework. 
\item Finally, we obtain similar results for the novelty detection task, by studying the asymptotic behavior of the false discovery proportion (FDP) of classical procedures \citep{bates2023testing,jin2023model} (Theorems~\ref{thr:BH95} and \ref{thr:BH95Weight}).  
\end{enumerate}
The proofs are based on specific decompositions of the processes that can be found in Section~\ref{sec:empiricalProcess}, while further details are postponed to appendices, notably Section~\ref{sec:WeakCPlimit} with a presentation of the main results of the paper under relaxed assumptions.

\begin{figure}[h!]
\center
\includegraphics[scale=0.28]{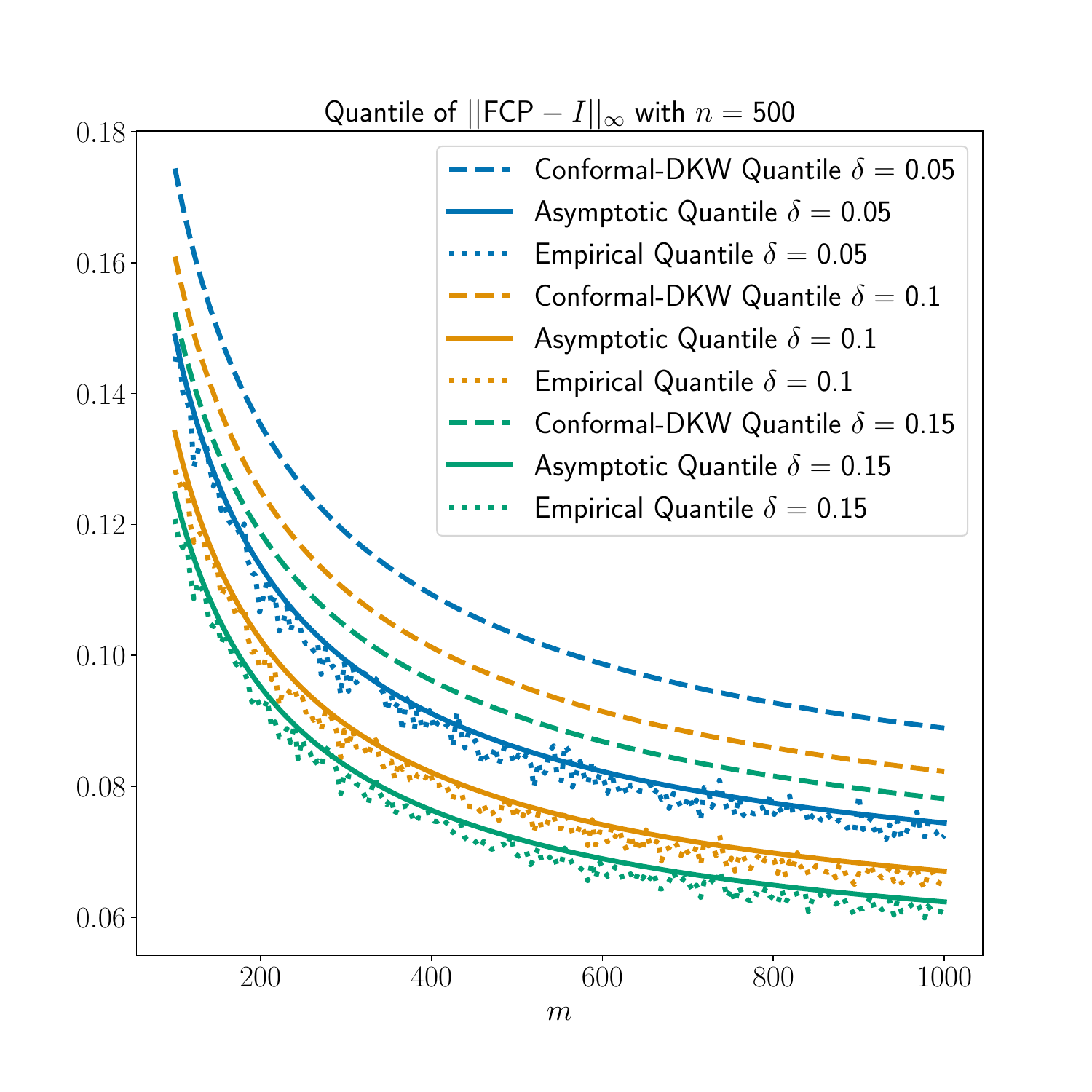}\includegraphics[scale=0.28]{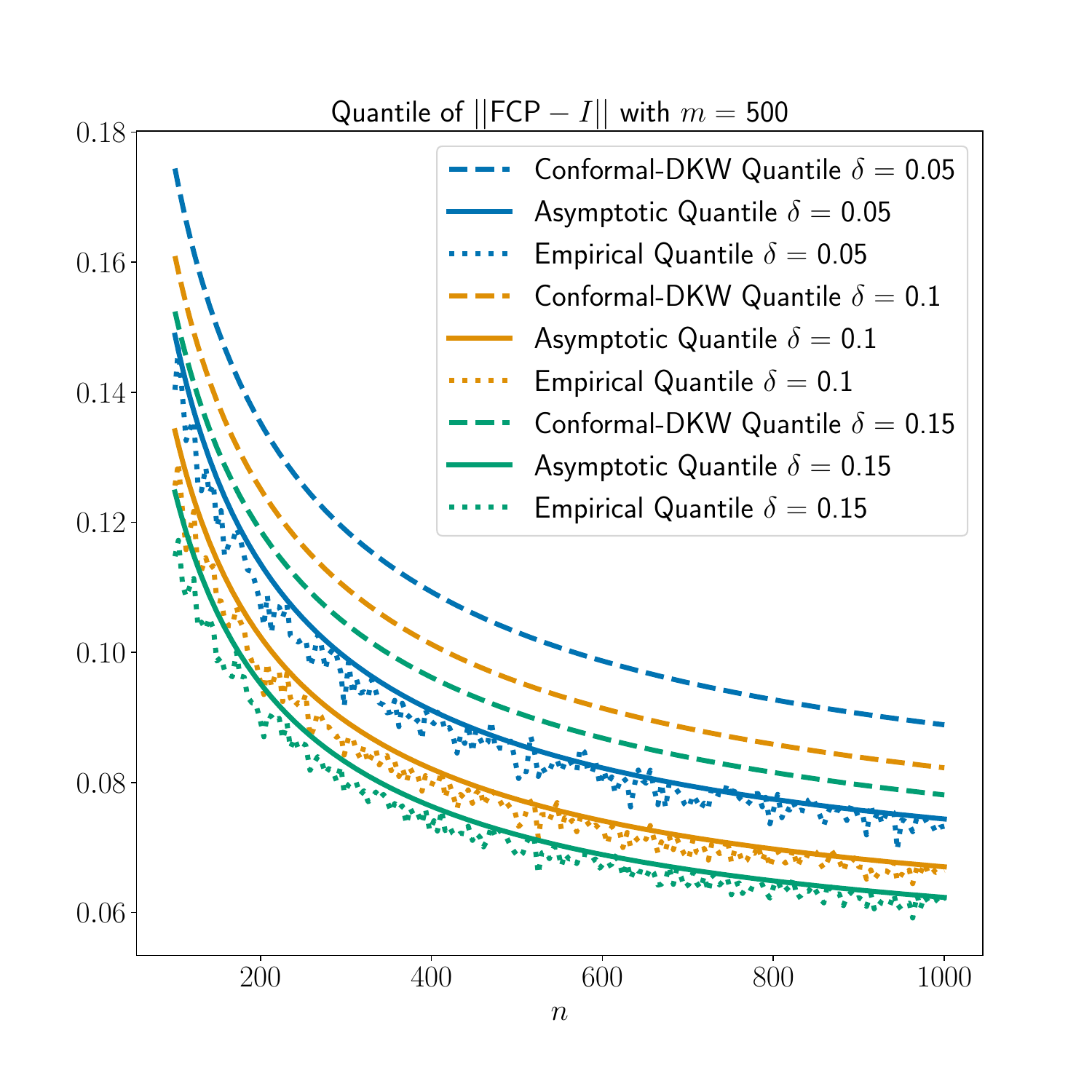}
\caption{
\label{fig:intro}
Comparison of the $(1-\delta)$-quantile of different approximations of the distribution of 
$\| \FCP_m^{(n)}- I_n\|_\infty$ in \eqref{equ-diffinfty} for different values of $n,m$ and $\delta$. The approximations include Monte-Carlo ($1000$ replications), Conformal-DKW \citep{gazin2023transductive} and the asymptotic exact Kolmogorov one.}
\end{figure}
\begin{figure}[h!]
\center
\includegraphics[scale=0.4]{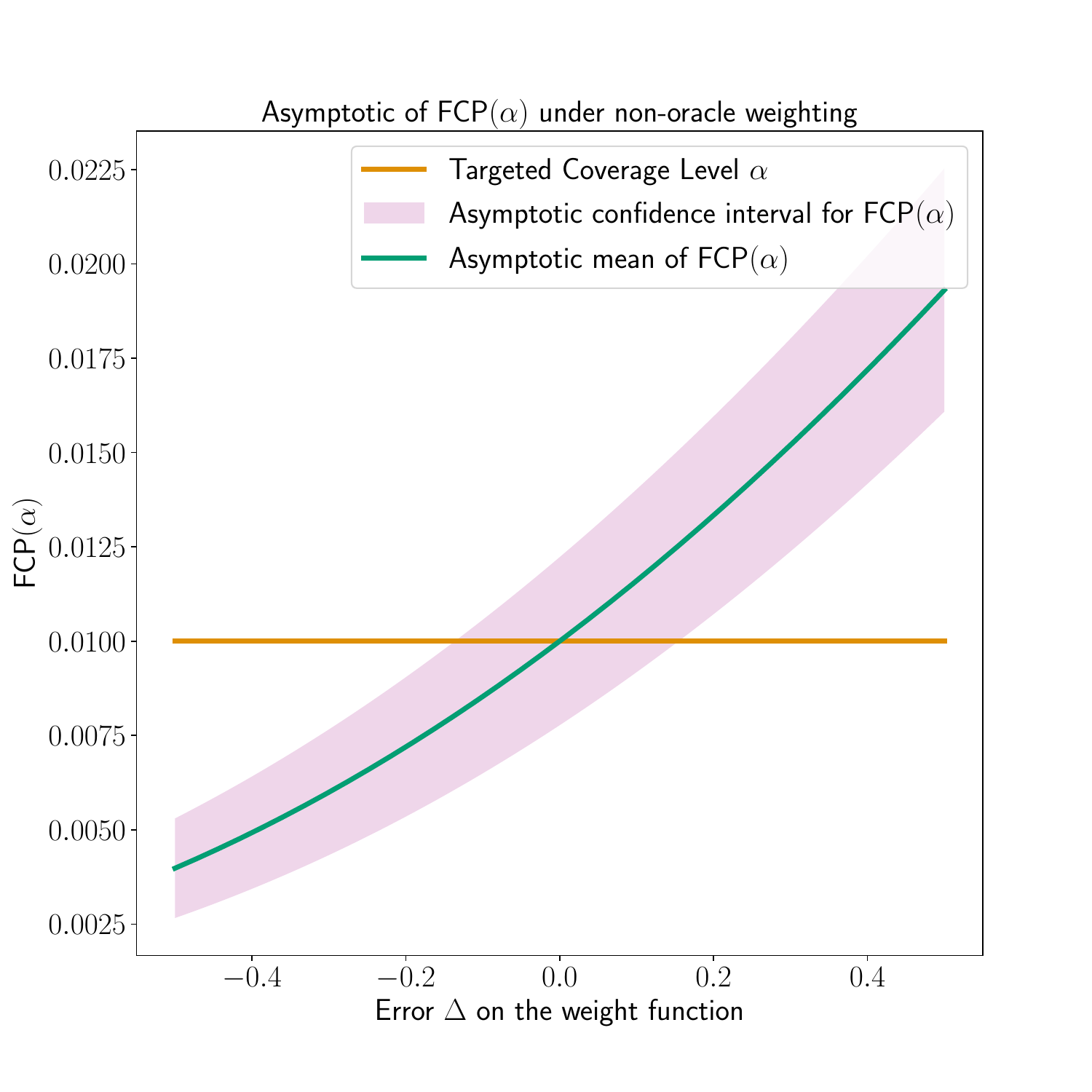}
\caption{Plot of the asymptotic confidence interval for the $\FCP$ of the weighted conformal method (at level $80\%$) obtained in Theorem~\ref{thr:cvAlterWeight} versus a parameter estimation error $\Delta$. The calibration sample and the test sample are distributed according to the exponential distribution with mean $1$ and $1/3$, respectively. The weight function used in the conformal method is $w_\Delta(x)=\exp(-(2+\Delta)x)\1{x>0}$, which corresponds to the oracle choice if $\Delta=0$ (for which the asymptotic average of $\FCP(\alpha)$ is equal to $\alpha$) and deviates from it if $\Delta\neq 0$. 
\label{fig:IlluNonOracle}
  }
\end{figure}
\subsection{Relation to previous work}\label{sec:relationtopreviouswork}

Conformal prediction is a general pipeline and we refer the reader to \cite{vovk2005algorithmic} or \cite{angelopoulos2021gentle} for reviews. We focus here on the inductive/split conformal inference \citep{papadopoulos2002inductive}, where an independent training sample is used to build the predictors, while a calibration sample is used to adjust the prediction sets. All our results can be thought of holding conditionally on the training sample, that is, they hold once the point-predictions have been computed. 

The process $\FCP_m^{(n)}$ coincides with the empirical cumulative distribution function (e.c.d.f.) of conformal $p$-values as introduced by \cite{saunders1999transduction}. As recalled above, for a given nominal level $\alpha$, the non-asymptotic distribution of $\FCP_m^{(n)}(\alpha)$ has been obtained in \cite{f2023universal,huang2024uncertainty} and the full distribution of the process $\FCP_m^{(n)}$ has been given in \cite{gazin2023transductive}. The main difference with the current study is that they provide non-asymptotic properties   under the exchangeable setting while we provide asymptotic properties under the exchangeable and the non-exchangeable frameworks.
In addition, \cite{f2023universal} gives an asymptotic result for $\FCP_m^{(n)}(\alpha)$ with $\alpha\in(0,1)$ fixed in the regime where $n$ tends to infinity only after making $m$ tend to infinity, whereas here we make $n$ and $m$ grow to infinity independently
leading to a double asymptotic. Furthermore all our results are uniform in $\alpha$.  

Next, the asymptotic distribution of $\FCPm$ with $n$ fixed and $m\rightarrow+\infty$ is also studied with a PAC point of view since this limit is equal to the miscoverage probability given the calibration sample $\Pc[0]{Y_{n+1}\notin\mathcal{C}^\alpha(X_n+1)}{(X_i,Y_i)_{i\in\range{n}}}$: \cite{vovk2012conditional} derives  Hoeffding-like tail bounds for a fixed $\alpha$, and other bounds was derived by \cite{bian2022training,sarkar2023post} the latter being uniform in $\alpha$. \cite{angelopoulos2021gentle} states that $\Pc[0]{Y_{n+1}\notin\mathcal{C}^\alpha(X_n+1)}{(X_i,Y_i)_{i\in\range{n}}}$ follows a Beta distribution, which is exactly the limit identified by \cite{f2023universal}. Similarly, \cite{nguyen2024data} obtained asymptotic results for the error as $m$ tends to infinity while $n$ is kept fixed but with more general risk function and not necessary the $0-1$ loss function. In the novelty detection setting,  \cite{bates2023testing} also provided  bounds for the type I error given the calibration sample uniformly in  $\alpha$.

Finally, we study here novelty detection procedures obtained by applying the Benjamini-Hochberg procedure ($\BH$, \citealp{BH1995}) to the conformal $p$-values \citep{mary2021semisupervised,bates2023testing,marandon2022machine} or to the weighted conformal $p$-values \citep{tibshirani2019conformal,barber2023conformal,jin2023model}.  Asymptotics for the FDP of such procedures have been extensively studied in the literature, see \cite{GW2002,GW2004,Neu2008} for i.i.d. uniform $p$-values and \cite{DR2011,DR2016,kluger2024central} for several types of dependence structures. 
The present work is in this line of research by seeing (weighted) conformal $p$-values as a particular case of dependent $p$-values having a specific dependency structure induced by the common calibration sample. In particular, our work shows that the ratio $n/(n+m)$ measures the dependency strength in the double asymptotic regime where both $n$ (calibration sample size) and $m$ (the number of test) tend to infinity.
In the novelty detection setting, we derive our results by using the techniques introduced in \cite{Neu2008}, that combine functional central limit theorems with the functional delta method. In particular, we provide the full asymptotic FDP distribution for the procedure of \cite{jin2023model} (or a variant thereof), for which only in-expectation results were known.

\section{Preliminaries}

\subsection{Prediction setting}\label{sec:predictionSet}

We consider the classical inductive/split conformal prediction \citep{papadopoulos2002inductive}. 
A calibration set $\{Z_k,k\in\range{n}\}$ is observed with $Z_k=(X_k,Y_k)$ and, given a new point $Z_{n+1}=(X_{n+1},Y_{n+1})$ for which only the covariate $X_{n+1}\in \mathcal{X}$ is observed, a prediction set should be inferred for the outcome $Y_{n+1}\in  \mathcal{Y}$.\footnote{The regression and classification settings correspond to $\mathcal{Y}=\R$ and $\mathcal{Y}$ finite, respectively.}
A training dataset $\dtrain$ \emph{independent of the test point and the calibration sample} is observed and used to compute a non-conformity score function $S:\mathcal{X}\times\mathcal{Y}\rightarrow\R$. The scalar $S(X,Y)\in\R$ measures the non-conformity of the response $Y\in\mathcal{Y}$ with the covariate $X\in\mathcal{X}$.  All our results hold conditionally on the training dataset, hence to alleviate notation we will consider the score function $S$ as fixed and determinist. 
This is in accordance with the usual split conformal model: the training/calibration split is considered as already done, with a fixed  independent training sample and a growing calibration sample.
A classical example in regression is the absolute value of the residual $S(X,Y)=|Y-\wh{\mu}(X)|$ where $\wh{\mu}:\mathcal{X}\rightarrow\mathcal{Y}$ is a regression function trained from an independent training sample (considered as fixed here). 
 The (split) conformal prediction set at level $(1-\alpha)$ for $X_{n+1}$, denoted by $\mathcal{C}^{\alpha}(X_{n+1})$, is defined as 
\begin{equation}\label{eq:PredictionSet}
\mathcal{C}^{\alpha}(X_{n+1})\coloneqq\set{y\in\mathcal{Y}:\ S(X_{n+1},y)\leq S_{\paren{\lceil(n+1)(1-\alpha)\rceil}}},
\end{equation}
where $S_{(1)}\leq S_{(2)}\leq\cdots\leq S_{(n)}< S_{(n+1)}:=+\infty$ correspond to the ordered calibration scores $\dcal\coloneqq\set{S_k:{k\in\range{n}}}=\set{S(X_k,Y_k)\telque k\in\range{n}}$. 
The interest of the conformal prediction set is the validity of the prediction region under weak assumption. More precisely, if the random variables $(Z_1,\cdots,Z_n,Z_{n+1})$ (i.e. the $n$ calibration points and the new test point) are exchangeable (or i.i.d.) then  {we have the marginal coverage property:} $\P(Y_{n+1}\notin\mathcal{C}^\alpha(X_{n+1}))\leq \alpha$. {Furthermore, } if the scores $(S(X_i,Y_i))_{i\in\range{n+1}}$ are exchangeable and without ties almost surely, then $\P(Y_{n+1}\notin\mathcal{C}^\alpha(X_{n+1}))$ is lower bounded by $\alpha-(n+1)^{-1}$ (see \citealp{vovk2005algorithmic} for more details). 
The set $\mathcal{C}^{\alpha}(X_{n+1})$ can be equivalently described by using  $p$-values (this classical fact can be retrieved from Lemma~\ref{lemma:QuantileWeight}). In particular, by defining the (unobserved) \emph{conformal $p$-value}  $\p{1}{n}$ as in   \eqref{eq:IntroConfpvalues} with the test score $T_1=S(X_{n+1},Y_{n+1})$  (see below), we obtain that $Y_{n+1}\notin\mathcal{C}^{\alpha}(X_{n+1})$ if and only if $\p{1}{n}\leq \alpha$. By the above marginal coverage property, $\p{1}{n}$ is a valid $p$-value for testing exchangeability of $(Z_1,\cdots,Z_n,Z_{n+1})$ since it is super-uniformly distributed under exchangeability. 
In this paper, we consider a 
multiple/simultaneous inference setting (also called transductive by \citealp{vovk2013transductive,gazin2023transductive,behboodi2025fundamental}), where a decision should be made for a whole test sample of size $m$ $(Z_{n+1},\dots,Z_{n+m})$ with $Z_{n+i}=(X_{n+i},Y_{n+i})$, for which only the covariates $X_{n+1},\dots,X_{n+m}$ are  observed. We denote $\dtest\coloneqq\set{T_i\telque i\in\range{m}}$ with $T_i=S(X_{n+i},Y_{n+i})$ the set of the unobserved (since only the $X_{n+i}$'s are observed) scores of the test sample. 
Considering the conformal prediction sets $\mathcal{C}^\alpha(X_{n+i})$, $i\in\range{m}$ --- corresponding to \eqref{eq:PredictionSet} for any test point --- gives rise to a family of $m$ conformal $p$-values $(\p{i}{n})_{i\in\range{m}}$ defined by
\begin{equation}\label{eq:IntroConfpvalues}
\p{i}{n}\coloneqq(n+1)^{-1}\paren{1+\sum_{k\in\range{n}} \1{S_k\geq T_i}}, \:\:\:  i\in\range{m}.
\end{equation} 
We also define the family of {\it theoretical $p$-values} $(\p{i}{+\infty})_{i\in\range{m}}$ when the calibration scores are i.i.d. as
\begin{equation}\label{eq:theopvalue}
\p{i}{+\infty}\coloneqq \Pc{S_1\geq T_i}{T_i}, \:\:\:  i\in\range{m}.
\end{equation}
By applying the law of large number when the calibration scores are i.i.d., the $p$-value $\p{i}{+\infty}$ can be seen as the limit of $\p{i}{n}$ when $n$ tends to infinity.
The false coverage proportion $\FCP_m^{(n)}(\alpha)$ of the prediction set family $\paren{\mathcal{C}^\alpha(X_{n+i})}_{i\in\range{m}}$ is defined by
\begin{equation}\label{eq:FCP}
\FCP_m^{(n)}(\alpha)\coloneqq\frac{1}{m}\sum_{i\in\range{m}}\1{Y_{n+i}\notin\mathcal{C}^\alpha(X_{n+i})}=\frac{1}{m}\sum_{i\in\range{m}}\1{\p{i}{n}\leq \alpha}.
\end{equation}
Note that this corresponds to the empirical cumulative distribution function (e.c.d.f.) of the conformal $p$-values family $(\p{i}{n})_{i\in\range{m}}$. 
We emphasise that the set $\dtest$ of test scores is not observed, and neither are the conformal $p$-values \eqref{eq:IntroConfpvalues} in the prediction setting. The $\FCP$ is not observed, and the aim is to study its behavior when the sizes of the calibration and test samples both tend to infinity.
The following assumption will be considered throughout the paper for the (weighted or not) conformal prediction task.
\begin{assumption}
\label{as:CP}
The set of calibration scores $\paren{S_k}_{k\geq 1}$ and the set of test scores  $\paren{T_i}_{i\geq 1}$ are two independent families of real random variables. The $S_k$'s ($k\geq 1$), (resp. $T_i$'s ($i\geq 1$)) are i.i.d. with distribution $\Pcal$ (resp. $\Ptest$) and cumulative distribution function  $\Fcal$ (resp. $\Ftest$).
Moreover, $\Fcal$ and $\Ftest$ are continuous functions.
\end{assumption}

When the variables $Z_i, i\in \range{n+m}$, are  i.i.d., Assumption~\ref{as:CP} is true with $\Pcal=\Ptest$ conditionally on $\dtrain$, see Remark~\ref{rem:Independence} below for more details.
Since the vector of scores $\dcal\cup\dtest$  is exchangeable in this case, 
  the conformal $p$-values are marginally super-uniform \citep{vovk2005algorithmic,RW2005} which leads to non-asymptotically valid prediction sets. However, in case of a distribution shift between the distributions of the calibration and the test sample, that is $\Pcal\neq \Ptest$, this property is lost. To solve this issue, \cite{tibshirani2019conformal,barber2023conformal} proposed in this case to use weighted conformal prediction. We follow this approach (with light formal variations for mathematical convenience) by introducing a nonnegative  {\it weight function} $w:\R\rightarrow\R^+$, $w(+\infty)\in \R^+$ and the prediction set
\begin{equation*}
\mathcal{C}^{w,\alpha}(X_{n+1})\coloneqq\set{y\in\mathcal{Y}:\ S(X_{n+1},y)\leq Q_{1-\alpha}\paren{{\frac{w(+\infty)}{W}\delta_{+\infty}+\sum_{k\in\range{n}}\frac{w\paren{S_k}}{W}\delta_{S_k}}}},
\end{equation*}
where $Q_{1-\alpha}(\mu)$ denotes the $1-\alpha$-quantile of the probability measure $\mu$, $\delta_a$ denotes the Dirac measure in $a$ and $W:=w(+\infty)+\sum_{k\in\range{n}} w\paren{S_k}$ is a normalization constant. 
There are different ways to choose the weight function $w$. 
In the case of a known distribution shift in some specific model, the weight function is usually chosen as the Radon-Nikodym derivative of the shift \citep{tibshirani2019conformal,jin2023model}. \cite{barber2023conformal} assumes that the weight function $w$ is deterministic and fixed by the user before looking at the data. To accommodate both points of view, we assume that $w$ can be estimated from a (possibly infinite) independent training dataset (potentially different from $\dtrain$ above) containing both data with the same distribution as the calibration points, and data with the same distribution as the test points. Our results hold conditionally on this training sample and $w$ can be considered as deterministic.
Thanks to Lemma~\ref{lemma:QuantileWeight}, the set $\mathcal{C}^{w,\alpha}(X_{n+1})$ can 
 be described with {\it weighted} conformal $p$-values: $Y_{n+1}\notin\mathcal{C}^{w,\alpha}(X_{n+1})$ if and only if $\pw{1}{n}\leq\alpha$, with $\pw{1}{n}$ the (unobserved) \emph{weighted conformal $p$-value} given by \eqref{eq:IntroConfpvaluesWeight} below.
Similarly, with a test sample of size $m$, we obtained $m$ {\it{weighted conformal $p$-values}},
\begin{equation}\label{eq:IntroConfpvaluesWeight}
\pw{i}{n}\coloneqq \frac{w(+\infty)+\sum_{k=1}^{n}w(S_k)\1{S_k\geq T_i}}{w(+\infty)+\sum_{k=1}^{n}w(S_k)}, \text{ for all $i\in\range{m}$}.
\end{equation}
The $\FCP$ of these weighted conformal prediction sets is then defined by
\begin{equation}\label{eq:FCPWeight}
\FCPmw(\alpha)\coloneqq\frac{1}{m}\sum_{i\in\range{m}}\1{Y_{n+i}\notin\mathcal{C}^{w,\alpha}(X_{n+i})}=\frac{1}{m}\sum_{i\in\range{m}}\1{\pw{i}{n}\leq \alpha},
\end{equation}
which, similarly to \eqref{eq:FCP}, is the e.c.d.f. of the weighted conformal $p$-values family. 
As for the unweighted case, the weighted conformal $p$-values are not observed, and neither is the corresponding false coverage proportion.

Finally, under Assumption~\ref{as:CP} and if $\Ptest$ is absolutely continuous with respect to $\Pcal$,  a particularly interesting weight function, called the {\it oracle weight function}, is given by 
\begin{equation}\label{eq:oracleWeight}
w^*\coloneqq \frac{\dd \Ptest}{\dd \Pcal}.
\end{equation}
 Clearly, if the calibration and test sample are exchangeable ($\Pcal=\Ptest$), the function $w^*$ is  constantly  equal to $1$. However, under a distribution shift ($\Pcal\neq\Ptest$), the oracle weight function is different, and oracle-weighted conformal $p$-values $\pwo{i}{n}$ \eqref{eq:IntroConfpvaluesWeight} are different from the regular ones $\p{i}{n}$ \eqref{eq:IntroConfpvalues}. They satisfy the marginally super-uniform property provided that $w^*(+\infty)\geq \sup_{u\in\R} w^*(u)$, see \cite{tibshirani2019conformal}.
 
\begin{remark}\label{rem:Independence}
In split conformal inference the score function $S(\cdot)$ and the weight function $w(\cdot)$ are constructed here with an independent training dataset which creates a dependence among the scores $(S(Z_i))_{i\in\range{n+m}}$. 
 However, our results are stated conditionally on the training phase hence the weight function  $w(\cdot)$ and score function $S(\cdot)$ can be considered as deterministic and the scores $(S(Z_i))_{i\in\range{n+m}}$ are independent as stated in Assumption~\ref{as:CP} provided that the data $(Z_i)_{i\in\range{n+m}}$ are independent (which is a common assumption in the conformal prediction literature).
\end{remark}

\subsection{Novelty detection setting}\label{sec:noveltyDetect}
In the novelty detection setting (see for instance \citealp{vovk2005algorithmic,bates2023testing}), we observe a calibration set $\{Z_k,k\in\range{n}\}$ of size $n$, with values in $\mathcal{Z}$ distributed according to an unknown ``null'' distribution $\P_0$ and a test sample $\{Z_{n+i},i\in\range{m}\}$ with $Z_{n+i}\in\mathcal{Z}$ either distributed as $\P_0$ or not. Formally, we introduce a subset $\cH_0\subset \{1,2,\dots\}$  so that $Z_{n+i}\sim \P_0$ when $i\in \cH_0$. We also denote $\cH_1=\{1,2,\dots\}\backslash \cH_0$.
We assume that we have access to an independent training dataset $\dtrain$ used to learn a non-conformity score function $S:\mathcal{Z}\rightarrow \R$ such that $S(Z)\in\R$ measures the non-conformity of the variables $Z$ with respect to $\P_0$. Our results are stated conditionally on $\dtrain$ and the training phase. Therefore, as in the prediction setting,  we assume that the score function $S(\cdot)$ is fixed.
We denote $\dcal=\set{S_k\telque k\in\range{n}}$ for $S_k\coloneqq S(Z_k)$, and $\dtest=\set{T_i\telque i\in\range{m}}$ for $T_i:=S(Z_{n+i})$ the set of the scores from the calibration and test samples respectively.
A novelty detection procedure decides, for each $i\in\range{m}$, whether $Z_{n+i}$ is a novelty (that is, does not follow $\P_0$), or not.
The procedure described by \cite{bates2023testing} 
consists in computing the conformal $p$-values defined in \eqref{eq:IntroConfpvalues} (by using the specific $S_k$'s and $T_i$'s of novelty detection which are observed in this case),
and then applying the Benjamini-Hochberg procedure \citep{BH1995} 
on this $p$-value family $(\p{i}{n})_{i\in\range{m}}$  (see Section~\ref{sec:FDPasProcess} for more formal details). This gives a rejection set $\mathcal{R}_\alpha \subset \range{m}$ corresponding to the indices of the declared novelties. We introduce the two main quantities to measure the performance of the rejection set,
\begin{align}
\FDP_m^{(n)}(\mathcal{R}_\alpha)&\coloneqq\frac{\abs{\mathcal{R}_\alpha\cap \cH_0}}{\abs{\mathcal{R}_\alpha}\vee 1};\label{eq:FDP}\\
\TDP_m^{(n)}(\mathcal{R}_\alpha)&\coloneqq\frac{\abs{\mathcal{R}_\alpha\cap \cH_1}}{\abs{\cH_1}\vee 1}.\label{eq:TDP}
\end{align} 
The $\FDP$ corresponds to the proportion of errors among the declared novelties (related to a type I error notion), while the $\TDP$ corresponds to the proportion of correct decisions among the true novelties (related to power and type II error notions).
In the novelty detection setting all the scores are available hence the conformal $p$-values can be computed. 
Nevertheless, the $\FDP$ and the $\TDP$ remain unknown since the indices of the nulls/alternatives are not observed. The following is assumed throughout the paper for (weighted) novelty detection.
\begin{assumption}
\label{as:ND}
The set of calibration scores $\paren{S_k}_{k\geq 1}$ and the set of test scores $\paren{T_i}_{i\geq 1}$ 
are two independent families of real random variables. The variables $S_k$, $k\geq 1$,  are i.i.d. with distribution $\Pcal$ and c.d.f. $\Fcal$.  The variables $T_i$, $i\geq 1$, are independent,  the variables $T_i$, $i\in\cH_0$, are identically distributed as a null score distribution $P_0$ and c.d.f. $F_0$, and the variables $T_i$, $i\in\cH_1$, are identically distributed as an alternative score distribution $\Ptest$ (potentially different from $P_0$) with c.d.f. $\Ftest$. Moreover, $\Fcal$, $F_0$ and $\Ftest$ are continuous. 
\end{assumption}

We also consider the case of a distribution shift between the distribution of the calibration set $\Pcal$ and the null distribution $P_0$, in which case we propose to use a weighted $p$-value approach (as for the prediction task). The weights are assumed to be built with an independent training sample so they are considered as fixed here (our results hold conditionally on this sample) considering oracle weights as a particular case with perfect training. 
For some {\it weight function}  $w:\R\rightarrow\R^+$ and $w(+\infty)\in \R^+$, the weighted conformal $p$-values are the ones from \eqref{eq:IntroConfpvaluesWeight}  (by using the specific $S_k$'s and $T_i$'s of novelty detection).
We define the {weighted version of the procedure of} \cite{bates2023testing} as the BH procedure applied to this
weighted $p$-value family and we denote its rejection set by $\mathcal{R}^w_\alpha\subset \range{m}$. 
{The $\FDP$ and $\TDP$ of $\mathcal{R}^w_\alpha$ are defined similarly to above.}

Under Assumption~\ref{as:ND} and if $P_0$ is absolutely continuous with respect to $\Pcal$,  a particular weight is the {\it oracle weight function} defined by
\begin{equation}\label{eq:oracleWeightNovelty}
w^*\coloneqq\frac{\dd P_0}{\dd \Pcal}.
\end{equation}
The oracle weighted conformal $p$-values $\pwo{i}{n}$, $i\in \cH_0$, are marginally super-uniform provided that $w^*(+\infty)\geq \sup_{u\in\R} w^*(u)$, see \cite{tibshirani2019conformal}.

\begin{remark}\label{rem:IndependenceND}
The score function $S(\cdot)$ and the weight function $w(\cdot)$ are trained here with an independent training dataset which induces a non-trivial dependence between the scores $(S(Z_i))_{i\in\range{n+m}}$. {However, our results are stated conditionally on the training phase and the score function $S(\cdot)$ and weight function $w(\cdot)$ can be considered as deterministic. Hence, the scores $(S(Z_i))_{i\in\range{n+m}}$ satisfy Assumption~\ref{as:ND} conditionally on the training phase as long as the underlying data $(Z_i)_{i\in\range{n+m}}$ are independent and have the same distribution within a group (calibration, null, alternative), which is a classical assumption \citep{bates2023testing,jin2023model}.}
\end{remark}

\subsection{Spaces for process convergence}\label{sec:Topology}

We study the asymptotic convergence of random processes and we consider usual spaces defined in \cite{Bill1999} and \cite{VW1996}, denoted as usual $D[0,1]$, $D(0,1)$ and $\ell^{\infty}(0,1)$, and which are briefly described below.

First, $D[0,1]$ is the set of \cadlag functions $f:[0,1]\rightarrow \R$ with the usual Skorohod topology. Second, $D(0,1)$ is the set of \cadlag functions $f:(0,1)\rightarrow \R$ with the extended Skorohod topology\footnote{This definition is similar to the convergence in $D[0,+\infty)$ considered in  \cite{Bill1999}} defined as follows: $(x_n)_n\in D(0,1)^\N$ converges to $x\in D(0,1)$ if and only if, for all $(a,b)\in(0,1)^2$ with $a<b$ such that $x$ is continuous at points $a$ and $b$, the sequence $(x_n)_n$ restricted to $[a,b]$ converges to $x$ restricted to $[a,b]$ in $D[a,b]$ with the usual Skorohod topology. Finally, $\ell^{\infty}(0,1)$ is the space of locally bounded functions $f:(0,1)\rightarrow \R$ with the topology of the uniform convergence on all compact sets of $(0,1)$: a sequence $(f_n)_n\in[\ell^{\infty}(0,1)]^{\N}$ converges to $f\in\ell^{\infty}(0,1)$ if and only if for all $K$ compact subsets of $(0,1)$, $\sup_{x\in K}\abs{f_n(x)-f(x)}\rightarrow 0$.
We also consider $D(\R)$ the space of \cadlag functions from $\R$ to $\R$ with the extended Skorohod topology and $\ell^{\infty}(\R)$ the space of locally bounded functions from $\R$ to $\R$ with the topology of the uniform convergence on all compact sets of $\R$.

\section{Asymptotics for conformal prediction}\label{sec:predictionSetAsymptotic}

The aim here is to study the asymptotic properties of the processes $\FCPm$ \eqref{eq:FCP} and $\FCPmw$ \eqref{eq:FCPWeight}. We study each process in both exchangeable or distribution shift framework.

\subsection{Main results}
Let us 
 introduce the two following quantities:
\begin{align}
I(t)&\coloneqq t,\  t\in[0,1];\label{eq:identity}\\
G(t)&\coloneqq 1-\Ftest\circ\Fcal^{-1}(1-t),\  t\in(0,1),\label{eq:altcdf}
\end{align}
where $\Fcal^{-1}(u)=\inf\set{x\in\R:\Fcal(x)\geq u}$ denotes the general inverse of $\Fcal$.
Formally, $I$ and $G$ correspond to the c.d.f. of the theoretical $p$-values $p^{(+\infty)}=1-\Fcal(T)$ when $T\sim\Pcal$ and $T\sim \Ptest$, respectively. We denote $G'$ the derivative of $G$ provided that it is well defined.

\begin{theorem}\label{thr:BB}
Under Assumption~\ref{as:CP} with $\Fcal=\Ftest$, we have
\begin{equation*}
\sqrt{\tau_{n,m}}\paren{\FCPm-I}\cvloi \U \text{ on $D[0,1]$,}
\end{equation*}
where $\tau_{n,m}$ is defined by \eqref{eq:taunm} and $\U$ is a standard Brownian bridge.
\end{theorem}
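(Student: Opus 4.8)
The plan is to write $\FCPm(\alpha)$ as the empirical c.d.f. of the conformal $p$-values and express it, via a key identity, in terms of two independent classical empirical processes — one driven by the calibration scores $S_1,\dots,S_n$ and one by the test scores $T_1,\dots,T_m$ — and then apply the functional CLT (Donsker) to each, combine them, and transfer the result through a continuous map. Concretely, since $\Fcal=\Ftest=:F$ is continuous, the transformed scores $1-F(S_k)$ and $1-F(T_i)$ are i.i.d.\ uniform on $[0,1]$, so it suffices to treat the case of uniform scores on $[0,1]$. In that case $\p{i}{n} = (n+1)^{-1}(1+\sum_{k\in\range{n}}\1{S_k\ge T_i})$ and one checks the pointwise identity
\begin{equation*}
\FCPm(\alpha) = \widehat{H}_m\paren{\widehat{F}_n^{-1}\text{-type threshold}},
\end{equation*}
i.e.\ $\1{\p{i}{n}\le\alpha}$ depends on whether $T_i$ exceeds the $\lceil(n+1)(1-\alpha)\rceil$-th order statistic of the $S_k$'s; summing over $i$ gives $\FCPm(\alpha)$ as the test-sample empirical c.d.f.\ evaluated at a (random) calibration quantile. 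This is the decomposition promised in Section~\ref{sec:empiricalProcess}.

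Next I would centre and rescale. Write $\widehat{F}_n$ for the e.c.d.f.\ of the $S_k$'s and $\widehat{H}_m$ for that of the $T_i$'s, and let $\mathbb{G}_n^{\mathrm{cal}} = \sqrt{n}(\widehat{F}_n - I)$ and $\mathbb{G}_m^{\mathrm{test}} = \sqrt{m}(\widehat{H}_m - I)$ be the two empirical processes on $[0,1]$. By Donsker's theorem each converges in $D[0,1]$ to an independent standard Brownian bridge, say $B^{\mathrm{cal}}$ and $B^{\mathrm{test}}$; by independence of the two samples (Assumption~\ref{as:CP}) the pair converges jointly. Using the identity above plus a Bahadur–Kiefer / functional-delta linearisation for the quantile process $\widehat{F}_n^{-1} - I$ (valid since $I$ has derivative $1$ on $[0,1]$), I get
\begin{equation*}
\sqrt{\tau_{n,m}}\paren{\FCPm - I} = \sqrt{\frac{\tau_{n,m}}{m}}\,\mathbb{G}_m^{\mathrm{test}} \circ(\cdot) \;-\; \sqrt{\frac{\tau_{n,m}}{n}}\,\mathbb{G}_n^{\mathrm{cal}}\circ(\cdot) \;+\; o_{\P}(1),
\end{equation*}
uniformly on $[0,1]$, where $\tau_{n,m}/m \to$ (the limit of) $n/(n+m)$ and $\tau_{n,m}/n \to m/(n+m)$ and these two limits sum to $1$. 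Since $B^{\mathrm{cal}}$ and $B^{\mathrm{test}}$ are independent standard Brownian bridges, the combination $\sqrt{a}\,B^{\mathrm{test}} - \sqrt{1-a}\,B^{\mathrm{cal}}$ is again a standard Brownian bridge for any $a\in[0,1]$ — its covariance is $a\,(s\wedge t - st) + (1-a)(s\wedge t - st) = s\wedge t - st$ — which identifies the limit $\U$ regardless of the precise ratio $n/m$, exactly as claimed. To pass from the joint weak convergence of the two bridges to that of the above expression I would invoke the continuous mapping / extended continuous mapping theorem, after checking the relevant composition and evaluation maps are continuous at the limit (which is a.s.\ continuous).

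The main obstacle is the linearisation step: $\FCPm(\alpha)$ is $\widehat{H}_m$ evaluated at a \emph{random, $n$-dependent} argument (a calibration order statistic), so one is composing an empirical process with a random perturbation of the identity, and the floor function $\lceil(n+1)(1-\alpha)\rceil$ introduces $O(1/n)$ discretisation jumps. Controlling this requires an asymptotic-equicontinuity / stochastic-equicontinuity argument for $\mathbb{G}_m^{\mathrm{test}}$ to absorb the $O_\P(n^{-1/2})$ displacement of the argument, together with a uniform bound on $\widehat{F}_n^{-1} - I$ and a check that the $1/n$ rounding is negligible at rate $\sqrt{\tau_{n,m}}$ (it is, since $\sqrt{\tau_{n,m}}/n \le \sqrt{n\wedge m}/n \to 0$). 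A clean way to organise this is to note $\FCPm$ is itself an e.c.d.f.\ on the finite grid $\{0,1/(n+1),\dots,1\}$ and to compare it directly with the composition $\widehat{H}_m\circ (1 - \widehat{F}_n(\cdot))$-type object, bounding the difference by the modulus of continuity of $\widehat{H}_m$; the rest is then a routine application of joint Donsker plus Slutsky in $D[0,1]$.
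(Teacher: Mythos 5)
Your proposal is correct and follows essentially the same route as the paper: reduce to uniform scores by the probability integral transform, write $\FCPm$ as the test-sample e.c.d.f.\ composed with a random calibration quantile, apply joint Donsker together with a functional-delta linearisation of the quantile process and a random-change-of-time/stochastic-equicontinuity argument to handle the random argument, and observe that $\sqrt{a}\,\U-\sqrt{1-a}\,\V$ is again a standard Brownian bridge so the limit does not depend on the ratio $n/m$. The only cosmetic difference is that the paper makes the ``regardless of the ratio'' step rigorous by passing to subsequences along which $n/(n+m)$ converges, and packages the composition step as an explicit random-change-of-time lemma.
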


\begin{remark}
Theorem~\ref{thr:BB} is true under slightly more general assumptions: namely, it is sufficient that all the scores are exchangeable and without ties almost surely
(see Theorem~\ref{thr:BBechangeable} for a formal statement).
Using the latter is useful since non i.i.d. scores arise naturally in various setting, like in link prediction for graph \citep{blanchard2024fdr} or when using adaptive scores \citep{gazin2023transductive}.
\end{remark}

Theorem~\ref{thr:BB} is proved in Section~\ref{proof:BB}. 
As a corollary, if $n/(n+m)\rightarrow \sigma^2\in(0,1]$, this gives 
\begin{equation*}
\sqrt{m}\paren{\FCPm-I}\cvloi \sigma^{-1}\U.
\end{equation*}
The latter shows that if  the ratio $\tau_{n,m}m^{-1}$ or $nm^{-1}$ is small (see Lemma~\ref{lemma:taunmAsymptotic} for the equivalence), the FCP becomes asymptotically over-dispersed. This is markedly different from the case where we have $m$ i.i.d. $p$-values uniformly distributed on $(0,1)$ (compare with the Donsker theorem, see Theorem~\ref{lemma:Donsker}).

Theorem~\ref{thr:BB} (or more precisely its extension Theorem~\ref{thr:BBechangeable} when the scores are exchangeable and without ties almost surely)  can be formulated as a result on the asymptotic properties of the empirical c.d.f. of the random vectors $(q_1,\cdots,q_m)$ with the distribution $P_{n,m}$ defined in \cite{gazin2023transductive}. More formally, let  us assume that we are doing $m$ sucessive draws in a P\'{o}lya urn model with initially $1$ ball of each of the $n+1$ colors labelled  $\set{i/(n+1),\ i\in\range{n+1}}$. We denote  $(q_i^{(n)})_{i\in\range{m}}$ the colors of the $m$ successive draws. Then Theorem~\ref{thr:BB} yields,
\begin{equation*}
 \paren[4]{\sqrt{\tau_{n,m}}\brac[3]{m^{-1}\sum_{i\in\range{m}}\1{q_i^{(n)}\leq t}-t}}_{t\in[0,1]}\cvloi \U\ \text{on }D[0,1],
\end{equation*}
with $\U$ being a standard Brownian Bridge. Since obtaining this P\'{o}lya urn consists on a first sampling of $n+m$ i.i.d. real random variables with a continuous c.d.f., and then sample without replacement $n$ points to compute the ranks of each of the $m$ other points, Theorem~\ref{thr:BB} can also be used to obtain the asymptotic of this kind of sampling without replacement procedure.
Another interesting application is the convergence of the uniform distribution over the set of discrete random measure on the set $\set{i/(n+1),\ i\in\range{n+1}}$ with $m$ atoms to the uniform distribution on $(0,1)$ whenever $n\wedge m\rightarrow +\infty$. Formally, let $Q_m^{(n)}$ be a random probability measure drawn uniformly from the set   of discrete random measure on the set $\set{i/(n+1),\ i\in\range{n+1}}$ with $m$ atoms. Denoting for all $t\in[0,1]$, $F_m^{(n)}(t)=Q_m^{(n)}([0,t])$ its c.d.f. Theorem~\ref{thr:BB} yields the following convergence in distribution
\begin{equation*}
\sqrt{\tau_{n,m}}\paren{F_m^{(n)}-I}\cvloi \U\ \text{on $D[0,1]$},
\end{equation*} 
with $\U$ being a standard Brownian Bridge. This gives that the random measure $(Q_m^{(n)})_{m,n}$ converges in probability to the uniform distribution on $(0,1)$ for the Kolmogorov Smirnov divergence, with an asymptotic quantification of its fluctuations. All those convergences are direct consequences a direct corollary of Theorem~\ref{thr:BB} (and Theorem A in the supplementary material of \cite{gazin2023transductive} giving us various properties of $P_{n,m}$). 
However, establishing such links between conformal inference and P\'{o}lya urn theory in the event of a distribution shift is neither straightforward nor direct.

Let us now consider the case where we have potentially a distribution shift, that is, $\Fcal\neq \Ftest$. For this, we consider the following additional assumptions
on $\Fcal$ and $\Ftest$.

\begin{assumption}\label{as:strictCroissance}
$\Fcal$ is increasing on its support $(a,b)\subset\R$ with $-\infty\leq a<b\leq +\infty$ and is continuously differentiable.
In addition, $\Ftest$ is continuously differentiable.
\end{assumption}

\begin{remark}
Assumptions on $\Fcal$ and $\Ftest$ (Assumption~\ref{as:strictCroissance} 
and the continuity assumption of Assumption~\ref{as:CP}) can be slightly relaxed by rather imposing a condition on the marginal distribution $\Ftest\circ\Fcalcag^{-1}$ of the theoretical $p$-value \eqref{eq:theopvalue}: Section~\ref{sec:WeakCPlimit} is devoted to the formal statement of those relaxed theorems.
\end{remark}

\begin{theorem}\label{thr:cvAlter}
Under Assumptions~\ref{as:CP} and \ref{as:strictCroissance} 
 and assuming that $n/(n+m)$ tends to $\sigma^2\in[0,1]$, we have
\begin{align*}
\sqrt{\tau_{n,m}}\paren{\FCPm-G}\cvloi\sigma\mathbb{U}\circ G+\sqrt{1-\sigma^2}G'\mathbb{V}
\text{ on $D(0,1)$},
\end{align*}
where $\U$ and $\V$ are two independent Brownian bridges.
\end{theorem}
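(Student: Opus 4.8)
The plan is to realize $\FCPm$ as a \emph{fixed} transformation of the empirical c.d.f.\ of the test scores and the empirical quantile function of the calibration scores, and then linearize it — this is the kind of ``specific decomposition'' announced for Section~\ref{sec:empiricalProcess} — combining Donsker's theorem for the test sample with the functional CLT for the calibration quantile process. Throughout I work on an arbitrary compact $[a,b]\subset(0,1)$ and conclude in $D(0,1)$ at the end via the criterion recalled in Section~\ref{sec:Topology}.

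\textbf{Step 1 (exact representation).} Since $\Fcal,\Ftest$ are continuous and the two samples are independent (Assumption~\ref{as:CP}), the $n+m$ scores are a.s.\ pairwise distinct, so $\p{i}{n}=1-\tfrac{n}{n+1}\Gncal{n}(T_i)$ with $\Gncal{n}$ the e.c.d.f.\ of $S_1,\dots,S_n$. Writing $v=1-t$ and $c_n=\tfrac{n+1}{n}$, the identity to establish is, for $t\in[a,b]$ and $n$ large,
\begin{equation*}
\FCPm(t)=\frac1m\sum_{i\in\range{m}}\ind{\Gncal{n}(T_i)\ge c_n v}=1-\Fmtest\!\big((\Gncal{n})^{-1}(c_n v)\big)\quad\text{a.s.},
\end{equation*}
while $G(t)=1-\Ftest(\Fcal^{-1}(v))$. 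Replacing $c_n v$ by $v$ (and absorbing the ceiling hidden in the empirical quantile) only perturbs the argument by $O(1/n)$, hence contributes $o_{\mathbb{P}}(\tau_{n,m}^{-1/2})$ uniformly on $[a,b]$, because $\tau_{n,m}\le n\wedge m$ and $\Fcal^{-1},\Ftest$ are Lipschitz on the relevant range (Assumption~\ref{as:strictCroissance}); so it may be discarded.

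\textbf{Step 2 (input processes).} I would invoke Donsker's theorem (Theorem~\ref{lemma:Donsker}) for the i.i.d.\ $T_i$'s, $\sqrt m(\Fmtest-\Ftest)\cvloi\mathbb{B}_2\circ\Ftest$, and the functional CLT for the empirical quantile process of the i.i.d.\ $S_k$'s — valid on compacts of $(0,1)$ precisely because Assumption~\ref{as:strictCroissance} makes $\Fcal$ strictly increasing and $C^1$ — $\sqrt n\big((\Gncal{n})^{-1}-\Fcal^{-1}\big)\cvloi-\mathbb{B}_1/(\Fcal'\circ\Fcal^{-1})$ in $D[a,b]$, with $\mathbb{B}_1,\mathbb{B}_2$ independent standard Brownian bridges by independence of the two samples. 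Since $\tau_{n,m}/m\to\sigma^2$ and $\tau_{n,m}/n\to1-\sigma^2$ (Lemma~\ref{lemma:taunmAsymptotic}), the pair $\big(\sqrt{\tau_{n,m}}(\Fmtest-\Ftest),\ \sqrt{\tau_{n,m}}((\Gncal{n})^{-1}-\Fcal^{-1})\big)$ converges jointly to $\big(\sigma\,\mathbb{B}_2\circ\Ftest,\ -\sqrt{1-\sigma^2}\,\mathbb{B}_1/(\Fcal'\circ\Fcal^{-1})\big)$.

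\textbf{Step 3 (delta method and identification).} The map $\Phi:(f,q)\mapsto\big(t\mapsto1-f(q(1-t))\big)$ is Hadamard-differentiable at $(\Ftest,\Fcal^{-1})$ tangentially to continuous functions — a standard composition-map lemma applies, since $\Ftest$ is $C^1$ with uniformly continuous derivative on the relevant compact and the perturbation directions are a.s.\ continuous — with derivative $(h_1,h_2)\mapsto\big(t\mapsto-h_1(\Fcal^{-1}(1-t))-(\Ftest'\circ\Fcal^{-1})(1-t)\,h_2(1-t)\big)$. The functional delta method (with rate $\sqrt{\tau_{n,m}}$) then yields, in $D[a,b]$,
\begin{equation*}
\sqrt{\tau_{n,m}}\big(\FCPm-G\big)\cvloi\Big(t\mapsto-\sigma\,\mathbb{B}_2\big(\Ftest(\Fcal^{-1}(1-t))\big)+\sqrt{1-\sigma^2}\,\tfrac{(\Ftest'\circ\Fcal^{-1})(1-t)}{(\Fcal'\circ\Fcal^{-1})(1-t)}\,\mathbb{B}_1(1-t)\Big).
\end{equation*}
Differentiating $G$ gives $(\Ftest'\circ\Fcal^{-1})(1-t)/(\Fcal'\circ\Fcal^{-1})(1-t)=G'(t)$ and $\Ftest(\Fcal^{-1}(1-t))=1-G(t)$; using the time-reversal invariance of the Brownian bridge, $\{-\mathbb{B}_2(1-\cdot)\}\egalloi\{\U(\cdot)\}$ and $\{\mathbb{B}_1(1-\cdot)\}\egalloi\{\V(\cdot)\}$ with $\U\perp\V$, the limit equals $\sigma\,\U\circ G+\sqrt{1-\sigma^2}\,G'\,\V$. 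Finally, $G$ and $G'$ are continuous on $(0,1)$, so this limit has a.s.\ continuous paths there; hence $D[a,b]$-convergence for every $[a,b]\subset(0,1)$, all points of $(0,1)$ being a.s.\ continuity points, upgrades to convergence in $D(0,1)$.

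\textbf{Main obstacle.} The crux is the linearization of the \emph{random} composition $\Fmtest\circ(\Gncal{n})^{-1}$: replacing the random quantile $(\Gncal{n})^{-1}(c_n v)$ by the deterministic $\Fcal^{-1}(v)$ inside the test empirical process requires a stochastic-equicontinuity bound for $\sqrt m(\Fmtest-\Ftest)$ over intervals of vanishing length, which is exactly what makes $\Phi$ Hadamard-differentiable with the stated derivative. The remaining difficulty is bookkeeping at the edges: the quantile process and $G'$ need not stay bounded as $t\to0^+$ or $t\to1^-$ (e.g.\ when $\Fcal'$ vanishes at an endpoint of its support), which forces the statement to live on $D(0,1)$ rather than $D[0,1]$; the extreme regimes $\sigma^2\in\{0,1\}$ are then automatic, one of the two driving terms having a vanishing coefficient.
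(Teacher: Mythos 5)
Your proposal is correct and follows essentially the same route as the paper: the representation $\FCPm(t)=1-\Fmtest\circ({\Gncal{(n)}})^{-1}(1-t)$, Donsker's theorem for the two independent samples, and a linearization whose derivative is exactly the paper's decomposition~\eqref{eq:ProcessDecomposition}, followed by Slutsky and time-reversal of the bridges. The only (cosmetic) differences are that the paper's $(n+1)$-normalization of $\Gncal{(n)}$ together with Lemma~\ref{lemma:QuantileWeight} makes your Step~1 identity exact (no $c_n$ correction to argue away), and that the paper splits your joint delta method for $(f,q)\mapsto f\circ q$ into a random-change-of-time lemma for the test term plus the inverse- and composition-map delta methods for the calibration term.
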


Theorem~\ref{thr:cvAlter} is proved in Section~\ref{proof:cvAlter}. When $\Ftest=\Fcal$ it recovers Theorem~\ref{thr:BB} (under the additional convergence assumption of $n/(n+m)$) since if $\U$ and $\V$ are two independent standard Brownian bridges and $\sigma\in[0,1]$, then $\sigma\U+\sqrt{1-\sigma^2}\V$ is also a standard Brownian bridge. By contrast, when $\Ftest\neq\Fcal$,  Theorem~\ref{thr:cvAlter}  quantifies the effect of the distribution shift. In particular, the limit of $\FCPm$ is $G$ instead of simply $I$, thus  $\FCPm(\alpha)$ is not provided to converge to $\alpha$ anymore.

The conclusions of Theorems~\ref{thr:BB} and \ref{thr:cvAlter} involve a double asymptotic in $n$ and $m$ for which $n/(n+m)\rightarrow \sigma^2\in[0,1]$. The different regimes are summarized in Table~\ref{tab:3Regime}. Whenever $\sigma^2$ is neither $0$ nor $1$ this is equivalent to have $m\sim_{\tau_{n,m}\rightarrow \infty}(\sigma^{-2}-1)n$ and thus $m$ and $n$ are asymptotically proportional. For $\sigma^2=0$ (`under-calibrated' regime),  the convergence rate is slower, of order $\sqrt{n}\ll \sqrt{m}$, for instance $m^{1/4}$ for $n=\sqrt{m}$, or $\sqrt{m(\log(m))^{-1}}$ for $n=m(\log(m))^{-1}$. This corresponds to the case where the conformal $p$-values are `strongly'  dependent.
On the other hand, if $\sigma^2=1$ (`over-calibrated' regime), the rate $\sqrt{\tau_{n,m}}$ is equivalent to the classical rate in case of independence $\sqrt{m}$, with the same asymptotic variance. In that case, the convergence result is the same as for the independent theoretical $p$-values \eqref{eq:theopvalue}.
This observation still apply in the weighted conformal prediction case.

{\scriptsize
{
\renewcommand{\arraystretch}{1.7}
 \begin{table}[h!]
 \centering
\begin{tabular}{|c||c|c|c|}
   \cline{2-4}
    \multicolumn{1}{c|}{} & {Regime of convergence}& {Rate of convergence: $\sqrt{\tau_{n,m}}$}& Variance term\\
   \hline
\multirow{3}*{Conformal}&Under-calibrated: $m/n\rightarrow +\infty$ & $\sqrt{n}$ & $\sigma^2=0$ \\ \cline{2-4}
& Proportional: $n\sim\gamma m,\ \gamma >0$ & $\sqrt{\frac{\gamma}{\gamma+1}m}$& $\sigma^2={\frac{\gamma}{\gamma+1}}\in(0,1)$ \\ \cline{2-4}
&Over-calibrated: $m/n\rightarrow 0$ & $\sqrt{{m}}$  & $\sigma^2=1$ \\ 
    \hline
Independent & Theoretical $p$-values (as $n=+\infty$) & $\sqrt{m}$ & as $\sigma^2=1$ \\
\hline
 \end{tabular}
 \caption{Three different regimes of convergence: $\sigma^2=0$, $\sigma^2\in(0,1)$ and $\sigma^2=1$ linked with the ratio between the number of calibration points and test points.
   The case of theoretical independent $p$-values is presented for comparison.
 }\label{tab:3Regime}
 \end{table}
}}

\subsection{Weighted case}\label{sec:CPNotationWeight}

Let $w:\R\cup \{\infty\}\mapsto \R^+$ be a  bounded and measurable weight function. 
To describe the asymptotic behavior of $\FCPmw$, we need to introduce few additional quantities. First, we let
\begin{equation}\label{eq:cdfcalWeight}
\Wcal(t)\coloneqq\frac{\int_{-\infty}^t w(u)\dd \Pcal(u)}{\int_\R w(u)\dd \Pcal(u)},\  t\in\R,
\end{equation}
which corresponds to the c.d.f. of the distribution 
induced by weight function $w$ on the distribution $\Pcal$ (i.e. proportional to $w(u)\dd\Pcal(u)$). Note that when choosing the oracle weight function \eqref{eq:oracleWeight}, the latter is simply the c.d.f. of $\Ptest$. 
Second, we let
\begin{equation}\label{eq:altcdfWeight}
G^w(t)\coloneqq 1-\Ftest\circ(\Wcal)^{-1}(1-t),\  t\in(0,1),
\end{equation}
the c.d.f. of the theoretical $p$-values $p^{w,(+\infty)}=1-\Wcal(T)$ with $T\sim\Ptest$, and we denote $(G^w)'$ its derivative  when it exists.
Finally, we introduce quantities 
involved in the asymptotic variance:
\begin{align}
\rw&\coloneqq\frac{\paren{\int_\R w(u)^2\dd \Pcal(u)}^{1/2}}{\int_\R w(u)\dd \Pcal(u)}\geq 1;\label{eq:varianceratio}\\
I^w(t)&\coloneqq 1-{V}^w_{{\tiny\mbox{cal}}}\circ{(\Wcal)^{-1}}(1-t),\  t\in(0,1),\label{eq:varianceIdentity}
\end{align}
with
\begin{equation}
{V}^w_{{\tiny \mbox{cal}}}(t)\coloneqq \frac{\int_{-\infty}^t w(u)^2\dd \Pcal(u)}{\int_\R w(u)^2\dd \Pcal(u)},\  t\in\R.\label{eq:variancecdf}
\end{equation}

\begin{assumption}\label{as:weight}
The weight function $w:\R\cup \{\infty\}\rightarrow \R^{+}$ is uniformly bounded by a constant and is measurable.   
Moreover, $\Wcal$ is increasing on its support $(a^w,b^w)\subset\R$ with $-\infty\leq a^w<b^w\leq +\infty$ and is continuously differentiable. 
\end{assumption}
Typically, Assumption~\ref{as:weight} holds if $\set{t\in\R,\ w(t)>0}$ is a possibly infinite interval  of $\R$ on which $w$ is bounded and continuous, and $\Fcal$ satisfies Assumption~\ref{as:strictCroissance}.

\begin{theorem}\label{thr:cvAlterWeight}
Under Assumptions~\ref{as:CP}, \ref{as:strictCroissance}, 
 \ref{as:weight} 
 and assuming that $n/(n+m)$ tends to $\sigma^2\in[0,1]$, we have
\begin{equation*}
\sqrt{\tau_{n,m}}\paren{\FCPmw-G^w}
\cvloi\sigma\U\circ G^w+\sqrt{1-\sigma^2}\rw (G^w)'\paren{\mathbb{V}\circ{I^w}+\brac{I-I^w}N} \text{ on $D(0,1)$},
\end{equation*}
where $\U$, $\V$ are two independent standard Brownian bridges, $N$ is an independent standard Gaussian random variable, $\rw$ is defined by \eqref{eq:varianceratio} and $I^w$  is defined by \eqref{eq:varianceIdentity}.
\end{theorem}

Theorem~\ref{thr:cvAlterWeight} is proved in Section~\ref{proof:cvAlterWeight}. Note that it recovers Theorem~\ref{thr:cvAlter} when $w\equiv1$. 
The asymptotic expectation of $\FCPmw(\alpha)$ is $G^w(\alpha)$ which is different from $\alpha$ for a general $w$. Interestingly, when choosing the oracle weight function $w^*$ given by \eqref{eq:oracleWeight}, we have $G^{w^*}=I$ and the convergence in Theorem~\ref{thr:cvAlterWeight} reads as follows:

\begin{theorem}\label{thr:BBWeight}
Under Assumptions~\ref{as:CP} and~\ref{as:strictCroissance}, assume that $\Ptest$ is absolutely continuous with respect to $\Pcal$ and  that the oracle weight function $w^*$ \eqref{eq:oracleWeight} satisfies Assumption~\ref{as:weight}. 
If the ratio $n/(n+m)$ tends to $\sigma^2\in[0,1]$, we have, 
\begin{equation*}
\sqrt{\tau_{n,m}}\paren{\FCPmwo-I}\cvloi\sigma\U+\sqrt{1-\sigma^2}\rwo\paren{\mathbb{V}\circ{I^{w^*}}+\brac{I-I^{w^*}}N} \text{ on $D(0,1)$},
\end{equation*}
where $\U$, $\V$ are two independent standard Brownian bridge and $N$ is an independent standard Gaussian random variable.
\end{theorem}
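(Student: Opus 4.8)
\emph{Proof plan.} The plan is to obtain Theorem~\ref{thr:BBWeight} as the special case $w=w^*$ of Theorem~\ref{thr:cvAlterWeight}, where $w^*$ is the oracle weight function \eqref{eq:oracleWeight}. The first step is to check that the present hypotheses permit this application: Assumptions~\ref{as:CP} and~\ref{as:strictCroissance} are assumed verbatim, Assumption~\ref{as:weight} is assumed for $w^*$, and the regime $n/(n+m)\to\sigma^2\in[0,1]$ is unchanged. Hence Theorem~\ref{thr:cvAlterWeight} applies with $w=w^*$, and it only remains to simplify the limiting process it produces under this particular choice of weights.

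The second and only substantive step is the computation of the deterministic ingredients of that limit when $w=w^*=\dd\Ptest/\dd\Pcal$. By the change-of-variables formula for Radon--Nikodym derivatives, $\int_{-\infty}^t w^*\,\dd\Pcal=\Ptest\paren{(-\infty,t]}=\Ftest(t)$ for every $t\in\R$, and in particular $\int_\R w^*\,\dd\Pcal=1$; inserting this into \eqref{eq:cdfcalWeight} gives $\Wcal=\Ftest$. Because Assumption~\ref{as:weight} (applied to $w^*$) makes $\Wcal=\Ftest$ increasing and continuously differentiable on its support, while $\Ftest$ is continuous by Assumption~\ref{as:CP}, one has $\Wcal\circ(\Wcal)^{-1}(u)=u$ for all $u\in(0,1)$; hence, from \eqref{eq:altcdfWeight}, $G^{w^*}(t)=1-\Ftest\circ\Ftest^{-1}(1-t)=t$, so that $G^{w^*}=I$ and $(G^{w^*})'\equiv 1$. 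The variance quantities $\rwo$ and $I^{w^*}$ defined in \eqref{eq:varianceratio} and \eqref{eq:varianceIdentity} require no further simplification and are carried over as they stand (with the mild observation that $\rwo=\paren{\int_\R (w^*)^2\,\dd\Pcal}^{1/2}$ since the normalising constant is $1$).

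Finally, substituting $G^{w^*}=I$ and $(G^{w^*})'\equiv 1$ into the conclusion of Theorem~\ref{thr:cvAlterWeight}, and using that $\U\circ I=\U$, yields
\[
\sqrt{\tau_{n,m}}\paren{\FCPmwo-I}\cvloi \sigma\,\U+\sqrt{1-\sigma^2}\,\rwo\paren{\V\paren{I^{w^*}}+\brac{I-I^{w^*}}N}\quad\text{on }D(0,1),
\]
which is exactly the assertion. Since the bulk of the work is already packaged inside Theorem~\ref{thr:cvAlterWeight}, there is no genuine obstacle here; the only point demanding a little care is the inversion identity $\Wcal\circ(\Wcal)^{-1}=\mathrm{id}$ on $(0,1)$ (equivalently, the well-definedness and smoothness of $G^{w^*}$ near the endpoints of the support), which is precisely what the monotonicity and regularity built into Assumption~\ref{as:weight} for $w^*$ guarantee.
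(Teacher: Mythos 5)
Your proposal is correct and is essentially the paper's own argument: the paper presents Theorem~\ref{thr:BBWeight} as an immediate specialization of Theorem~\ref{thr:cvAlterWeight} to $w=w^*$, after noting that $\Wcalo=\Ftest$ and hence $G^{w^*}=I$. Your explicit verification that $\Wcalo=\Ftest$ via the Radon--Nikodym change of variables and that $\Ftest\circ\Ftest^{-1}=\mathrm{id}$ on $(0,1)$ (using continuity of $\Ftest$) correctly fills in the one computation the paper leaves implicit.
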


Theorem~\ref{thr:BBWeight} shows that, in the case of a perfect training of the weight function, one can recover the same asymptotic behavior as in case of no distribution shift. However, note that the asymptotic variance is no longer universal and depends on the distributions proportionnal to $w\dd\Pcal$ and $w^2\dd\Pcal$. By contrast, Theorem~\ref{thr:cvAlterWeight} quantifies the impact  of the distribution shift when the weight function is mis-specified, with a limit $G^w$ and not $I$. Comparing this to Theorem~\ref{thr:cvAlter}, Theorem~\ref{thr:cvAlterWeight} indicates that, for a level $\alpha$, the FDP behavior is (asymptotically) more favorable if we consider a weight function such that  $G^w(\alpha)$ is closer to $\alpha$ than $G(\alpha)$. This phenomenon is illustrated in Figure~\ref{fig:IlluNonOracle} in a particular setting.

Finally, the covariance terms of Theorems~\ref{thr:cvAlterWeight}~and~\ref{thr:cvAlter} can be used to build asymptotic confidence intervals for the FCP, as illustrated in Figure~\ref{fig:IlluNonOracle}.

\section{Asymptotics for novelty detection}\label{sec:ND}

Recall the novelty detection setting of Section~\ref{sec:noveltyDetect}. 
The aim here is to study the asymptotic properties of the process $\FDPm(\cR_\alpha)$ \eqref{eq:FDP} and of the process $\TDPm(\cR_\alpha)$ \eqref{eq:TDP}.

\subsection{Additional notation and assumptions}

 We denote, for all $m\geq 1$, $m_0(m)=|\cH_0\cap \range{m}|$ and $\pi_0(m)=m_0(m)/m$  the number and the proportion of nulls (i.e. of non-novelties) among $m$ tested points, respectively.
 We introduce the following quantities and recall the definition of $G~\eqref{eq:altcdf}$:
 \begin{align}
 G(t)&=1-\Ftest\circ\Fcal^{-1}(1-t),\ t\in(0,1) \nonumber,\\
G_0(t)&=1-F_0\circ\Fcal^{-1}(1-t),\ t\in(0,1) \label{eq:wrongnullecdf},\\
\Flim&=\pi_0 G_0+(1-\pi_0)G,\label{eq:mixtureLimit}
\end{align}
that correspond to the (limiting) c.d.f. of the theoretical $p$-values~\eqref{eq:theopvalue} under the alternative, the null and under the test sample mixture, respectively.
For a given weight function $w$, we also define their weighted counterparts and {recall the definition of $G^w$~\eqref{eq:altcdfWeight}}:
\begin{align}
G^w(t)&= 1-\Ftest\circ(\Wcal)^{-1}(1-t),\  t\in(0,1), \nonumber \\
G_0^w(t)&=1-F_0\circ{\paren{\Wcal}^{-1}}(1-t),\ t\in(0,1),\label{eq:wrongnullecdfWeight}\\
\Flim^w&=\pi_0 G_0^w+(1-\pi_0)G^{w}\label{eq:mixtureLimitWeight}.
\end{align}
We denote $\Flim'$ and $(\Flim^w)'$ the derivatives of $\Flim$ and $\Flim^w$, respectively. 
 Note that Assumption~\ref{as:ND} with $F_0=\Fcal$ entails $G_0=I$. If $F_0\neq \Fcal$, we still have $G_0^{w^*}=I$ when  $w^*$ is the oracle weight function.

Let us introduce the two following assumptions:
\begin{assumption}\label{as:concavity}
$\Flim$ is strictly concave on $(0,1)$.\end{assumption}
\begin{assumption}\label{as:concavityWeight}
$\Flim^w$ is strictly concave on $(0,1)$.
\end{assumption}
Since $\Flim$ can be seen as the cdf of the theoretical $p$-values \eqref{eq:theopvalue} in the test sample, Assumption~\ref{as:concavity} is classical in the multiple testing literature \citep{GW2002} (see Section~\ref{sec:WeakCPlimit} for more precision), and Assumption~\ref{as:concavityWeight}  corresponds to its weighted version.

As shown in \cite{Chi2007,Neu2008}  in the independent case, there is a critical value for the BH procedure given by $\alpha^*=[\Flim'(0^+)]^{-1}$;  the central limit theorem for the FDP/TDP of BH procedure at level $\alpha$ can only be obtained if $\alpha>\alpha^*$ (for $\alpha<\alpha^*$, the BH procedure at level $\alpha$ has asymptotically no power). This quantity plays a similar role in the (dependent) conformal setting and we prove results only if $\alpha>\alpha^*$.

Finally, as in \cite{Neu2008}, we consider also the following thresholds
\begin{align}
\T_\alpha&=\sup\set{t\in(0,1),\ \Flim(t)\geq \frac{t}{\alpha}};\label{eq:asymptoticThreshold}\\
\T^{w}_\alpha&=\sup\set{t\in(0,1),\ \Flim^w(t)\geq \frac{t}{\alpha}},\label{eq:asymptoticThresholdWeight}
\end{align}
which are well defined (and belong to $(0,1)$) under Assumption~\ref{as:concavity} if $\alpha >[(\Flim)'(0^+)]^{-1}$ and Assumption~\ref{as:concavityWeight} if $\alpha>[(\Flim^w)'(0^+)]^{-1}$, respectively.

\subsection{Main results}

When $P_0=\Pcal$ (no distribution shift), the following result provides the convergence behavior of the novelty detection procedure of  \cite{bates2023testing}, that is, of $\mathcal{R}_\alpha$ given in Section~\ref{sec:noveltyDetect}. 

\begin{theorem}\label{thr:BH95}
Under Assumption~\ref{as:ND} with $P_0=\Pcal$, Assumption~\ref{as:strictCroissance} 
and Assumption~\ref{as:concavity}, let us consider $\mathcal{R}_\alpha$ the BH procedure at level $\alpha$ applied to the conformal $p$-values \eqref{eq:IntroConfpvalues}, with a level $\alpha>[\Flim'(0^+)]^{-1}$. If $n/(n+m)\rightarrow \sigma^2\in[0,1]$ and $\pi_0(m)\rightarrow\pi_0\in(0,1)$, we have
\begin{align}
\sqrt{\tau_{n,m}}\paren{\FDPm(\mathcal{R}_\alpha)-\pi_0\alpha}&\cvloi \mathcal{N}\paren{0,\alpha^2\pi_0\brac{\sigma^2+(1-\sigma^2)\pi_0}\frac{1-\T_\alpha}{\T_\alpha}};\label{eq:asymptoticFDPBH}\\
\sqrt{\tau_{n,m}}\paren{\TDPm\paren{\mathcal{R}_\alpha}-G(\T_\alpha)}&\cvloi\mathcal{N}\paren{0,\frac{\Sigma_\alpha}{(\alpha^{-1}-\Flim'(\T_\alpha))^2}},\label{eq:asymptoticTDPBH} 
\end{align}
with
$
\Sigma_\alpha={{G'(\T_\alpha)^2\T_\alpha(1-\T_\alpha)\brac{\pi_0\sigma^2+(1-\sigma^2){\alpha^{-2}}}}}{}
+{\brac{\alpha^{-1}-\pi_0}^2\paren{1-\pi_0}^{-1}G(\T_\alpha)(1-G(\T_\alpha))\sigma^2}{}.
$
\end{theorem}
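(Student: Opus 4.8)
The plan is to view the FDP and TDP as smooth functionals of the empirical process of the conformal $p$-values and to apply the functional delta method along the lines of \cite{Neu2008}, combined with the functional CLT that underlies Theorems~\ref{thr:BB}--\ref{thr:cvAlter}. First I would set up the joint empirical process. Write $\wh{G}_{m,\cdot}^{(n)}$ for the e.c.d.f.\ of all $m$ conformal $p$-values (this is exactly $\FCPm$ of Section~\ref{sec:predictionSet}), and let $\wh{G}_{0,m}^{(n)}$ and $\wh{G}_{1,m}^{(n)}$ be the (sub-)e.c.d.f.'s built only from the null indices $i\in\cH_0$ and the alternative indices $i\in\cH_1$, respectively, so that $\FCPm=\pi_0(m)\wh{G}_{0,m}^{(n)}+(1-\pi_0(m))\wh{G}_{1,m}^{(n)}$. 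Conditionally on the calibration sample, the three processes are driven by the same calibration randomness plus independent test randomness; the key step is to establish, on $D(0,1)^2$ (or $D(0,1)^3$), a joint functional CLT
\[
\sqrt{\tau_{n,m}}\begin{pmatrix}\wh{G}_{0,m}^{(n)}-G_0\\[2pt]\wh{G}_{1,m}^{(n)}-G\end{pmatrix}\rightsquigarrow \text{(Gaussian limit)},
\]
obtained by the same Hájek-projection / Skorohod-type decomposition used in Section~\ref{sec:empiricalProcess}: the calibration fluctuation contributes a common term of order $\sqrt{1-\sigma^2}\,G_0'\,\mathbb{V}$ (resp.\ $G'\mathbb{V}$) with the \emph{same} bridge $\mathbb{V}$ in both coordinates, while the test fluctuations contribute two \emph{independent} bridges rescaled by $\sigma/\sqrt{\pi_0}$ and $\sigma/\sqrt{1-\pi_0}$. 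With $P_0=\Pcal$ we have $G_0=I$, which is what produces the clean form of the variance.

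Next I would express the BH threshold as a functional. The BH procedure rejects $p$-values below $\wh{t}_m = \sup\{t\in(0,1):\ \F{\cdot}(t)\ge t/\alpha\}$, and by Assumption~\ref{as:concavity} together with $\alpha>[\Flim'(0^+)]^{-1}$ this functional $\Psi:F\mapsto \sup\{t:\ F(t)\ge t/\alpha\}$ is Hadamard-differentiable at $\Flim$ tangentially to $C(0,1)$, with $\wh{t}_m\to \T_\alpha$ and derivative $\Psi'_{\Flim}(h)=\alpha\, h(\T_\alpha)/(1-\alpha\Flim'(\T_\alpha))$ — this is precisely the computation in \cite{Chi2007,Neu2008}, and the condition $\alpha>\alpha^*$ guarantees the denominator is nonzero. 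Then $\FDPm(\cR_\alpha)=\pi_0(m)\wh{G}_{0,m}^{(n)}(\wh{t}_m)/\FCPm(\wh{t}_m)$ and $\TDPm(\cR_\alpha)=\wh{G}_{1,m}^{(n)}(\wh{t}_m)$, both of which are compositions of the maps $(F_0\text{-process}, F_1\text{-process})\mapsto \wh{t}_m \mapsto \text{evaluation}$. Applying the chain rule for Hadamard derivatives and the functional delta method (Theorem in \cite{VW1996}) to these compositions, using the joint CLT above, yields joint asymptotic normality; the asymptotic means are $\pi_0 G_0(\T_\alpha)/\Flim(\T_\alpha)=\pi_0\alpha$ (using $\Flim(\T_\alpha)=\T_\alpha/\alpha$ and $G_0=I$) and $G(\T_\alpha)=G(\T^*)$, as claimed.

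Finally I would assemble the variances. For the FDP, since $G_0=I$ the only fluctuation entering the numerator $\pi_0\wh{G}_{0,m}^{(n)}(\wh{t}_m)$ that is not cancelled by the denominator comes from evaluating the centered null process at $\T_\alpha$ and from the threshold perturbation; a short linearization $\FDPm-\pi_0\alpha \approx \pi_0[\widehat{G}_{0,m}^{(n)}(\T_\alpha)-\T_\alpha/\alpha\cdot\FCPm(\T_\alpha)/\Flim(\T_\alpha)]/\Flim(\T_\alpha) + (\text{threshold terms that cancel})$ collapses to $\pi_0 \alpha\,\T_\alpha^{-1}\times(\text{centered Brownian-bridge-type variable at }\T_\alpha)$, whose variance is $(\pi_0\alpha)^2\T_\alpha^{-2}\cdot$ (variance of that variable); the null-process variance at $\T_\alpha$ is $[\pi_0\sigma^2 + (1-\sigma^2)]\,\T_\alpha(1-\T_\alpha)$ rescaled appropriately, and bookkeeping the constants gives exactly $\alpha^2\pi_0[\sigma^2+(1-\sigma^2)\pi_0](1-\T_\alpha)/\T_\alpha$. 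For the TDP one linearizes $\wh{G}_{1,m}^{(n)}(\wh{t}_m)$ as $[\wh{G}_{1,m}^{(n)}(\T_\alpha)-G(\T_\alpha)] + G'(\T_\alpha)(\wh{t}_m-\T_\alpha)$, substitutes the derivative of $\Psi$ (which brings the factor $(\alpha^{-1}-\Flim'(\T_\alpha))^{-1}$), and collects the contributions of the independent alternative bridge (variance $\propto G'(\T_\alpha)^2\T_\alpha(1-\T_\alpha)$) and of the calibration bridge shared across all coordinates (variance $\propto (\alpha^{-1}-\pi_0)^2(1-\pi_0)^{-1}G(\T_\alpha)(1-G(\T_\alpha))\sigma^2$), producing $\Sigma_\alpha$.

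\textbf{Main obstacle.} The delicate point is the joint functional CLT on $D(0,1)^2$ for the null and alternative sub-e.c.d.f.'s \emph{with the correct dependence structure} — establishing that the calibration-induced fluctuation appears as one common Brownian bridge in all coordinates (with the distribution-free covariance it has under $P_0=\Pcal$) while the test-induced fluctuations are mutually independent — and then justifying Hadamard-differentiability of the BH threshold map tangentially to the relevant subspace so that the functional delta method applies to the \emph{composition}. Everything else is careful but routine bookkeeping of constants, exactly as in the one-dimensional proofs of Theorems~\ref{thr:BB}--\ref{thr:cvAlter} and in \cite{Neu2008}.
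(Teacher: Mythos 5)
Your plan follows essentially the same route as the paper: a joint functional CLT on $[D(0,1)]^2$ for the null and alternative sub-e.c.d.f.'s, driven by one shared calibration Brownian bridge plus two independent test bridges (this is Proposition~\ref{thr:conformalBridge}), then Hadamard differentiability of the BH threshold functional and the functional delta method \`a la \cite{Neu2008} (Proposition~\ref{thr:BH95Process}, where the paper additionally restricts $\T^{\BH_\alpha}$ to a compact $[a,b]\ni\T_\alpha$ to make the differentiability rigorous), and finally a direct covariance computation. One small slip in your last paragraph: the $G(\T_\alpha)(1-G(\T_\alpha))\sigma^2$ term of $\Sigma_\alpha$ comes from the \emph{independent alternative test bridge} $\V\circ G$ (via $\Z_1$ and the threshold), not from the shared calibration bridge, which instead contributes the $(1-\sigma^2)\alpha^{-2}$ part of the first term --- but since the final formulas you state are the correct ones, this misattribution does not affect the validity of the plan.
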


Theorem~\ref{thr:BH95}, proved in Section~\ref{proof:BH95}, relies on the pipeline introduced by \cite{Neu2008}, which consists in first deriving a functional central limit theorem for the e.c.d.f. of the $p$-values and then using the functional delta method \citep{Vaart1998}. 

Interestingly, under the assumptions of Theorem~\ref{thr:BH95} and if $\sigma^2>0$, \eqref{eq:asymptoticFDPBH} yields 
\begin{equation*}
\sqrt{m}\paren{\FDPm(\mathcal{R}_\alpha)-\pi_0\alpha}\cvloi \mathcal{N}\paren{0,\alpha^2\pi_0\brac{1+(\sigma^{-2}-1)\pi_0}\frac{1-\T_\alpha}{\T_\alpha}}.
\end{equation*}
When $\sigma=1$ (that is $n/m \to \infty$), we note that the FDP convergence is the same as when the $p$-values are independent \citep{Neu2008}. However, when $n/m$ is bounded, the asymptotic variance is affected by the dependence and gets larger when $\sigma$ decreases.  
In addition, and maybe more importantly, the convergence rate is heavily affected in the under-calibrated setting when $\sigma^2=0$, with a convergence rate $\sqrt{\tau_{n,m}}\ll \sqrt{m}$. This phenomenon is illustrated in Figure~\ref{fig:AsymptoticFDPBH} for $m=10^3$ and various $n$ in a particular setting, with corresponding convergence rates reported in Table~\ref{tab:rateConvergence}. The three different regimes from the prediction setting displayed in Table~\ref{tab:3Regime} also appear in this novelty detection setting.

\begin{figure}[h!]
\center
\includegraphics[scale=0.4]{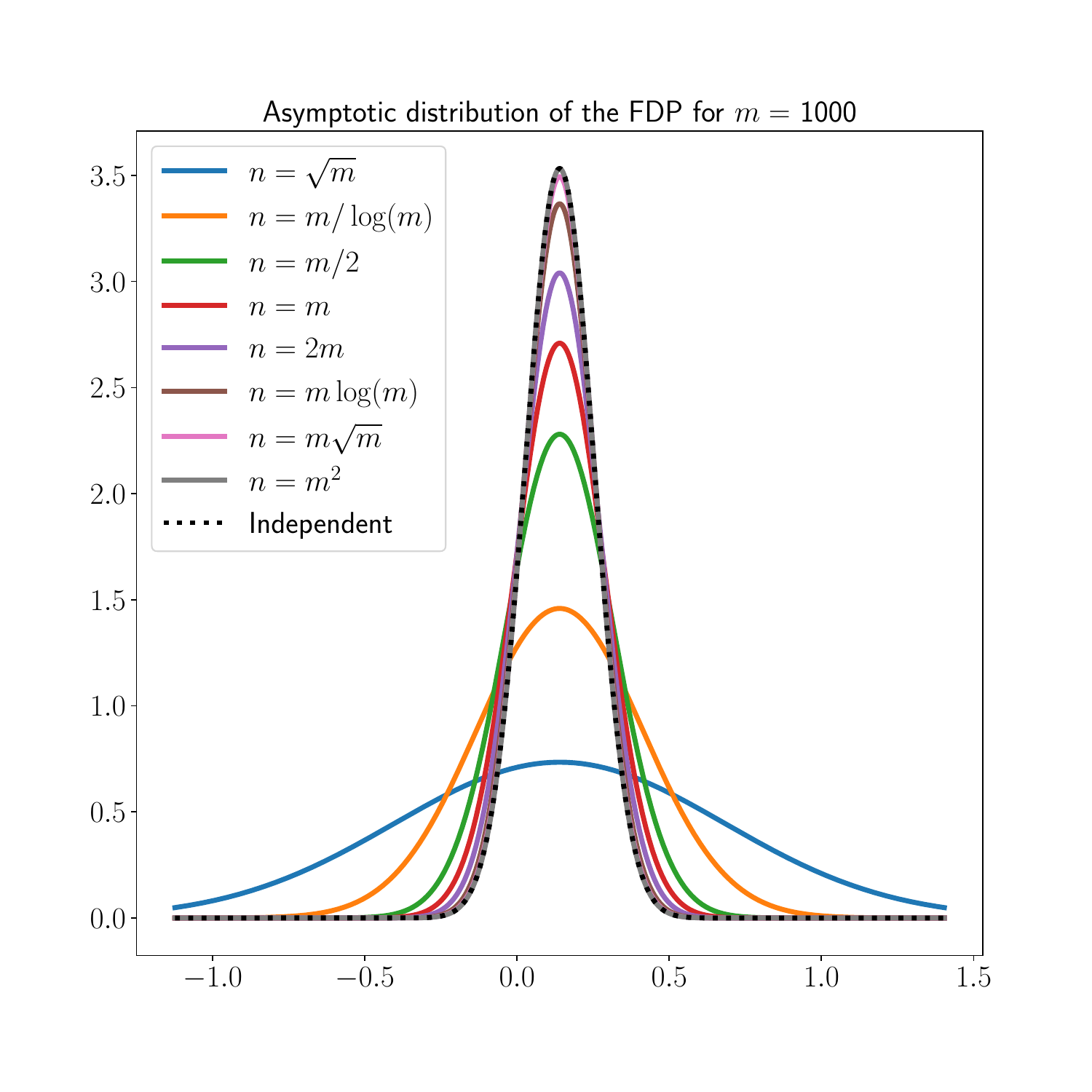}
\caption{Asymptotic approximation of the distribution of the $\FDP$ of the Benjamini-Hochberg procedure with conformal $p$-values given by Theorem~\ref{thr:BH95} with $m=10^3$ and several choice of $n$ (plain lines) at level $1-\alpha=0.8$. The case of independent theoretical  $p$-values (dashed line) is displayed for comparison. The null distribution and the calibration distribution is $\mathcal{N}(0,1)$ while the distribution of the alternative is $\mathcal{N}(3,1)$, with a null proportion $\pi_0$ equal to $70\%$. 
  }\label{fig:AsymptoticFDPBH}
\end{figure}

{\scriptsize
{
\renewcommand{\arraystretch}{1.7}
 \begin{table}[h!]
 \centering
\begin{tabular}{|c||c|c|c|}
   \cline{2-4}
    \multicolumn{1}{c|}{} &{Calibration sample size: $n$} & {Rate of convergence: $\sqrt{\tau_{n,m}}$}& Independence coefficient: $\sigma^2$\\
   \hline
\multirow{9}*{Conformal}&$\sqrt{m}$ & $m^{1/4}$ & $0$ \\ \cline{2-4}
&$\frac{m}{\log(m)}$ & $\sqrt{\frac{m}{\log(m)}}$& $0$ \\ \cline{2-4}
&$\frac{m}{2}$ & $\sqrt{\frac{m}{3}}$  & $\frac{1}{3}$ \\ \cline{2-4} 
&$m$ & $\sqrt{\frac{m}{2}}$ & $\frac{1}{2}$ \\ \cline{2-4} 
&$2m$ & $\sqrt{\frac{2m}{3}}$ & $\frac{2}{3}$ \\ \cline{2-4} 
&$\gamma m$, $\gamma>0$ & $\sqrt{\frac{\gamma}{\gamma+1}m}$ & $\frac{\gamma}{\gamma +1}$ \\ \cline{2-4}
&$m\log(m)$ & $\sqrt{m}$ & $1$ \\ \cline{2-4}
&$m^{3/2}$ & $\sqrt{m}$ & $1$ \\ \cline{2-4}
&$m^2$ & $\sqrt{m}$& $1$ \\ 
    \hline
Independent &$+\infty$ & $\sqrt{m}$ & $1$\\
\hline
 \end{tabular}
 \caption{
{{Rates $\sqrt{\tau_{n,m} }$ and $\sigma^2$ in Theorems~\ref{thr:cvAlter},~\ref{thr:cvAlterWeight},~\ref{thr:BH95} and \ref{thr:BH95Weight}, with different choices of $n$ and $m$.}
  Except the case $n=\gamma m$, they all correspond to the different regimes displayed in Figure~\ref{fig:AsymptoticFDPBH}. The case with independent $p$-values is presented for comparison. }
 }\label{tab:rateConvergence}
 \end{table}
}}
Under distribution shift, the following result holds. 

\begin{theorem}\label{thr:BH95Weight}
Under Assumptions~\ref{as:ND} and~\ref{as:concavityWeight}, assume that $P_0$ is absolutely continuous with respect to $\Pcal$ and that the oracle weight function $w^*$  \eqref{eq:oracleWeightNovelty} satisfies Assumptions~\ref{as:strictCroissance} and~\ref{as:weight}. Consider the BH procedure at level $\alpha$ applied to the oracle weighted $p$-values $(\pwo{i}{n})_{i\in \range{m}}$ \eqref{eq:IntroConfpvaluesWeight}, with a level $\alpha>[(\Flimw)'(0^+)]^{-1}$. Then if  $n/(n+m)\rightarrow \sigma^2\in[0,1]$ and $\pi_0(m)\rightarrow\pi_0\in(0,1)$, we have
\begin{align}
\sqrt{\tau_{n,m}}\paren{\FDPmw(\mathcal{R}^{w^*}_\alpha)-\pi_0\alpha}&\cvloi \Nor\paren{0,\alpha^2\pi_0\Xi^{{w^*}}_\alpha\frac{1-\T^{{w^*}}_\alpha}{\T^{{w^*}}_\alpha}};\label{eq:asymptoticFDPBHWeight}\\
\sqrt{\tau_{n,m}}\paren{\TDPmw(\mathcal{R}^{w^*}_\alpha)-{G^{w^*}}(\T^{{w^*}}_\alpha)}&\cvloi\mathcal{N}\paren{0,\frac{\Sigma^{{w^*}}_\alpha}{(\alpha^{-1}-(\Flimw)'(\T^{{w^*}}_\alpha))^2}}\label{eq:asymptoticTDPBHWeight},
\end{align}
where we denote
\begin{align*}
\Xi^{{w^*}}_\alpha&=\sigma^2+(1-\sigma^2)\rwo^2\pi_0\frac{I^{w^*}(\T^{{w^*}}_\alpha){\T^{{w^*}}_\alpha}^{-1}+{\T^{{w^*}}_\alpha}-2I^{w^*}(\T^{{w^*}}_\alpha)}{1-\T^{w^*,*}_\alpha};\\
\Sigma^{{w^*}}_\alpha&={{\paren{{(G^{w^*})}'(\T^{{w^*}}_\alpha)}^2\T^{{w^*}}_\alpha(1-\T^{{w^*}}_\alpha)\pi_0\sigma^2}}{}\nonumber\\
&\ +{\paren{\rwo{(G^{w^*})}'(\T^{{w^*}}_\alpha){\alpha^{-1}}}^2\brac{\T^{{w^*}}_\alpha-I^{w^*}(\T^{{w^*}}_\alpha)^2}(1-\sigma^2)}{}\nonumber\\
&\ +{\brac{\alpha^{-1}-\pi_0}^2\paren{1-\pi_0}^{-1}{G^{w^*}}(\T^{{w^*}}_\alpha)(1-{G^{w^*}}(\T^{{w^*}}_\alpha))\sigma^2}{}. 
\end{align*}

\end{theorem}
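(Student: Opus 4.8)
The plan is to follow the same pipeline as for Theorem~\ref{thr:BH95}: first upgrade the statement to a joint functional central limit theorem for the relevant weighted empirical c.d.f.s, then feed it into the functional delta method applied to the Benjamini--Hochberg threshold map, in the spirit of \cite{Neu2008}. Split the test indices into $\cH_0\cap\range m$ and $\cH_1\cap\range m$ and let $\widehat G^{w^*,(n)}_{m_0}$ and $\widehat G^{w^*,(n)}_{m_1}$ be the empirical c.d.f.s of the oracle-weighted conformal $p$-values $(\pwo{i}{n})$ restricted to these two sub-samples, so that the full weighted empirical c.d.f. is the mixture $\widehat G^{w^*,(n)}_m=\pi_0(m)\,\widehat G^{w^*,(n)}_{m_0}+(1-\pi_0(m))\,\widehat G^{w^*,(n)}_{m_1}$. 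Using the decomposition of Section~\ref{sec:empiricalProcess} that already underlies Theorems~\ref{thr:cvAlterWeight} and~\ref{thr:BBWeight}, I would prove that $\sqrt{\tau_{n,m}}$ times the centered pair $(\widehat G^{w^*,(n)}_{m_0}-I,\ \widehat G^{w^*,(n)}_{m_1}-G^{w^*})$ converges jointly in $D(0,1)^2$ to a Gaussian pair $(W_0,W_1)$; here I use that $w^*$ is the oracle weight, so that the null marginal c.d.f. is $G^{w^*}_0=I$ and the null coordinate is the ``$G^w=I$'' instance of Theorem~\ref{thr:BBWeight} (slope $1$), while the alternative coordinate is the generic instance of Theorem~\ref{thr:cvAlterWeight}. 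The essential structural point, visible in that decomposition, is that the calibration sample enters both coordinates through a single common Gaussian object --- the calibration term $\rwo\,(\cdot)'\big(\mathbb{V}(I^{w^*})+[I-I^{w^*}]N\big)$ of Theorems~\ref{thr:cvAlterWeight}/\ref{thr:BBWeight}, scaled by $\sqrt{1-\sigma^2}$ --- whereas the test-sample fluctuations of the two coordinates are mutually independent Brownian-bridge terms (the ``$\U\circ G$''-type terms), scaled by $\sigma$ and, once renormalised by the sub-sample sizes $m_0\sim\pi_0 m$, $m_1\sim(1-\pi_0)m$, carrying extra factors $\pi_0^{-1/2}$, resp. $(1-\pi_0)^{-1/2}$. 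This gives a process-level statement analogous to Proposition~\ref{thr:BH95Process}.

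From the mixture $\widehat G^{w^*,(n)}_m$, whose limiting centre is $\Flimw=\pi_0 I+(1-\pi_0)G^{w^*}$ and whose $\sqrt{\tau_{n,m}}$-limit is $W=\pi_0 W_0+(1-\pi_0)W_1$, I would then apply the functional delta method to the BH-threshold functional $F\mapsto\sup\{t\in(0,1):F(t)\ge t/\alpha\}$. Under Assumptions~\ref{as:concavityWeight}, \ref{as:strictCroissance}, \ref{as:weight} and $\alpha>[(\Flimw)'(0^+)]^{-1}$ this map is Hadamard-differentiable at $\Flimw$ tangentially to the continuous functions, with derivative $h\mapsto h(\T^{w^*}_\alpha)/(\alpha^{-1}-(\Flimw)'(\T^{w^*}_\alpha))$, exactly as in \cite{Neu2008,Vaart1998}; hence $\sqrt{\tau_{n,m}}(\widehat t_\alpha-\T^{w^*}_\alpha)\Rightarrow W(\T^{w^*}_\alpha)/(\alpha^{-1}-(\Flimw)'(\T^{w^*}_\alpha))$, with $\widehat t_\alpha$ the empirical BH threshold.

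Next I would read off the two proportions. Writing $\FDPmw(\mathcal{R}^{w^*}_\alpha)=\pi_0(m)\,\widehat G^{w^*,(n)}_{m_0}(\widehat t_\alpha)/\widehat G^{w^*,(n)}_m(\widehat t_\alpha)$, using $\widehat G^{w^*,(n)}_m(\widehat t_\alpha)=\widehat t_\alpha/\alpha+o_P(\tau_{n,m}^{-1/2})$, the stochastic equicontinuity of $\sqrt{\tau_{n,m}}(\widehat G^{w^*,(n)}_{m_0}-I)$, and $\pi_0(m)\to\pi_0$, one obtains --- because $G^{w^*}_0=I$ has slope $1$, so the $\widehat t_\alpha$-fluctuation cancels to first order inside $\widehat G^{w^*,(n)}_{m_0}(\widehat t_\alpha)-\widehat t_\alpha$, and in particular the limiting mean is $\pi_0\alpha$ --- that $\sqrt{\tau_{n,m}}(\FDPmw(\mathcal{R}^{w^*}_\alpha)-\pi_0\alpha)\Rightarrow(\alpha\pi_0/\T^{w^*}_\alpha)\,W_0(\T^{w^*}_\alpha)$. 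Likewise $\TDPmw(\mathcal{R}^{w^*}_\alpha)=\widehat G^{w^*,(n)}_{m_1}(\widehat t_\alpha)$ combined with the delta method on $\widehat t_\alpha$ gives $\sqrt{\tau_{n,m}}(\TDPmw(\mathcal{R}^{w^*}_\alpha)-G^{w^*}(\T^{w^*}_\alpha))\Rightarrow W_1(\T^{w^*}_\alpha)+(G^{w^*})'(\T^{w^*}_\alpha)\,W(\T^{w^*}_\alpha)/(\alpha^{-1}-(\Flimw)'(\T^{w^*}_\alpha))$. Both limits are centred Gaussian; the stated variances $\alpha^2\pi_0\,\Xi^{w^*}_\alpha(1-\T^{w^*}_\alpha)/\T^{w^*}_\alpha$ and $\Sigma^{w^*}_\alpha/(\alpha^{-1}-(\Flimw)'(\T^{w^*}_\alpha))^2$ are then obtained by decomposing $W_0$, $W_1$, $W$ into their common calibration part (point variance $\rwo^2[I^{w^*}(1-I^{w^*})+(I-I^{w^*})^2]$ times the relevant squared slope, with $(1-\sigma^2)$ prefactor) and their independent test parts (Brownian-bridge variances, with $\sigma^2\pi_0^{-1}$, resp. $\sigma^2(1-\pi_0)^{-1}$ prefactors), then inserting $\mathrm{Var}(\U(s))=\mathrm{Var}(\mathbb{V}(s))=s(1-s)$, the independence of $\mathbb{V}$ and $N$, the definitions \eqref{eq:varianceratio}--\eqref{eq:varianceIdentity} of $\rwo$ and $I^{w^*}$, and $G^{w^*}_0=I$; taking $w\equiv1$ (so $\rwo=1$, $I^{w^*}=I$) recovers Theorem~\ref{thr:BH95} and serves as a check.

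I expect the main obstacle to be the first step: establishing the joint weighted functional CLT for $(\widehat G^{w^*,(n)}_{m_0},\widehat G^{w^*,(n)}_{m_1})$ with the exact cross-covariance induced by the single shared calibration sample --- i.e.\ making rigorous that the only asymptotic dependence between the null and the alternative empirical processes passes through one common Gaussian object --- and then propagating this three-way decomposition (common calibration term, independent null-test term, independent alternative-test term) cleanly through the delta method and through the ratio defining the FDP. A secondary technical point is verifying Hadamard differentiability of the BH-threshold map in the extended-Skorohod topology on $D(0,1)$, in particular controlling the limit process near the endpoints $0$ and $1$; Assumptions~\ref{as:concavityWeight}, \ref{as:strictCroissance}, \ref{as:weight} together with $\alpha>[(\Flimw)'(0^+)]^{-1}$ are exactly what make this go through, following \cite{Neu2008}, but it has to be carried out with care.
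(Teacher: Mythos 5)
Your proposal is correct and follows essentially the same route as the paper: a joint weighted functional CLT for the null and alternative empirical c.d.f.'s with the shared calibration term $\rwo\sqrt{1-\sigma^2}\paren{\mathbb{W}_{I^{w^*}}+\brac{I-I^{w^*}}N}$ as the only source of cross-dependence (Proposition~\ref{thr:conformalBridgeWeight}), followed by the functional delta method for the BH threshold, FDP and TDP functionals (Proposition~\ref{thr:BH95ProcessWeight}), and a final covariance computation; your variance bookkeeping, including the calibration point variance $\rwo^2\brac{I^{w^*}(1-I^{w^*})+(I-I^{w^*})^2}$, matches $\Xi^{w^*}_\alpha$ and $\Sigma^{w^*}_\alpha$. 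The technical points you flag (localizing the BH functional to a compact $[a,b]\subset(0,1)$ for Hadamard differentiability) are exactly how the paper handles them in the proof of Proposition~\ref{thr:BH95Process}.
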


Theorem~\ref{thr:BH95Weight} is proved in Section~\ref{proof:thr:BH95Weight}. 
The weighted conformal BH procedure with (similar) weighted conformal $p$-values has been studied in 
\cite{jin2023model} and they obtained a convergence of FDR and TDR 
(i.e. $\E[\FDPmw(\mathcal{R}^{w^*}_\alpha)]$ and $\E[\TDPmw(\mathcal{R}^{w^*}_\alpha)]$) to quantities analogue to $\pi_0\alpha$ and ${G^{w^*}}(\T^{{w^*}}_\alpha)$, respectively. By contrast, Theorem~\ref{thr:BH95Weight} provides the full asymptotic distribution.

While Theorem~\ref{thr:BH95Weight} focuses on the oracle weight function, the non-oracle case is deferred to Section~\ref{sec:BHnonUniform} (see Theorem~\ref{thr:BH95ProcessnonNull}) .

The limiting distribution in Theorems~\ref{thr:BH95} and \ref{thr:BH95Weight} are similar to the previous ones in the literature \citep{Neu2008,DR2011,DR2016,kluger2024central} which is well expected since all these works study the asymptotic normality of the $\FDP$ and the $\TDP$ of the $\BH$ procedure. However, the novelty is in the setting: we do not have access to the theoretical and uniform $p$-values under the null, but only to the conformal $p$-values. Conformal $p$-values are by nature dependent (and $\mbox{PRDS}$ for the unweighted ones \citep{bates2023testing}) even with independent scores, while the dependency structures on the theoretical $p$-values in \cite{DR2011,DR2016,kluger2024central} come from the dependency of the gaussian test statistics. Furthermore conformal $p$-values are approximations of the theoretical ones, hence the study of the double asymptotic in both the nimber of test $m$ (which is the only one studied in the  literature) and the size $n$ of the calibration set is crucial and provided here.
Equivalent of the rate of convergence $\sqrt{\tau_{n,m} }$ and the limit $\sigma^2$ in Theorems~\ref{thr:cvAlter} and \ref{thr:BH95} and their weighted version Theorems~\ref{thr:cvAlterWeight} and \ref{thr:BH95Weight} for different choice of $n$ depending on $m$.
\subsection{Applications}

Theorem~\ref{thr:BH95} has several possible applications. 
A first application is to obtain an asymptotic confidence region of the $\FDP$ of the BH procedure in the conformal case. 

\begin{corollary}\label{cor:asymptoticFDPControl}
Let $\delta>0$. Under Assumption~\ref{as:ND} with $P_0=\Pcal$ and Assumptions~\ref{as:strictCroissance} and \ref{as:concavity}, let us consider $\mathcal{R}_\alpha$ the BH procedure at level $\alpha$ applied to $m$ conformal $p$-values \eqref{eq:IntroConfpvalues} with a calibration sample of size $n$, with a level $\alpha>[\Flim'(0^+)]^{-1}$. Denote $\widehat{\T}_\alpha=\alpha\abs{\mathcal{R}_\alpha}/m$ the threshold of $\mathcal{R}_\alpha$.
Define the upper bound:
\begin{equation*}
\widehat{\lambda}_{n,m}(\delta,\alpha,\pi_0)=\alpha\sqrt{\pi_0\brac{\frac{1}{m}+\frac{1}{n}\pi_0}\frac{1-\widehat{\T}_\alpha}{\widehat{\T}_\alpha}}\Phi^{-1}(1-\delta),
\end{equation*}
with $\Phi^{-1}$ the quantile function of the standard gaussian distribution.
Then if $n/(n+m)\rightarrow \sigma^2\in[0,1]$ and $\pi_0(m)\rightarrow\pi_0\in(0,1)$, we have
\begin{equation*}
\P\paren{\FDPm(\mathcal{R}_\alpha)\leq \pi_0\alpha+\widehat{\lambda}_{n,m}(\delta,\alpha,\pi_0)}\underset{\tau_{n,m}\rightarrow +\infty}{\longrightarrow} 1-\delta.
\end{equation*}
In addition, if  $\widehat{\pi}_0\in[0,1]$ is an estimator of $\pi_0$ such that $\varliminf\widehat{\pi}_0\geq \pi_0$ then,
\begin{equation}\label{eq:asymptoticApproximationFDPBH}
\varliminf_{\tau_{n,m}\rightarrow +\infty}\P\paren{\FDPm(\mathcal{R}_\alpha)\leq \widehat{\pi}_0\alpha+\widehat{\lambda}_{n,m}(\delta,\alpha,\widehat{\pi}_0)} \geq 1-\delta.
\end{equation}
\end{corollary}

Corollary~\ref{cor:asymptoticFDPControl} is a direct application of the asymptotic result \eqref{eq:asymptoticFDPBH}. It provides an asymptotic FDP control which takes into account the fluctuation of the FDP around the $\FDR$. In \eqref{eq:asymptoticApproximationFDPBH} the estimator $\wh{\pi}_0$ should satisfy $\varliminf\widehat{\pi}_0\geq \pi_0$. This includes the conservative case where $\wh{\pi}_0$ is constantly equal to $1$, but also the (in general) sharper bound for which $\wh{\pi}_0$ is the Storey estimator \citep{STS2004}, defined as
 \begin{equation}\label{eq:StoreyEstimator}
\wh{\pi}_0^{{\tiny{\mbox{Storey}}}}(m,n)=\frac{1+\sum_{i=1}^{m}\1{\p{i}{m}\geq\lambda}}{m(1-\lambda)},
\end{equation} 
where $\lambda\in(0,1)$ is a fixed parameter. Indeed, 
Applying Theorem~\ref{thr:conformalBridge} (or more precisely arguments in its proof, because the convergence of all empirical processes holds almost surely by the Glivenko-Cantelli theorem and by continuity), we have 
\begin{equation*}
\wh{\pi}_0^{{\tiny{\mbox{Storey}}}}(n,m){{\underset{\tau_{n,m}\rightarrow +\infty}{\longrightarrow}}}\pi_0+(1-\pi_0)\frac{1-G(\lambda)}{1-\lambda}\geq \pi_0\ a.s.
\end{equation*}
Note that using the Storey estimator in the context of conformal $p$-values has been already considered in the literature (e.g., \cite{bates2023testing,marandon2022machine}), but not for deriving an FDP bound.

Another application of Theorem~\ref{thr:BH95} is to use the asymptotic distribution in \eqref{eq:asymptoticTDPBH} to design the score function $S:\mathcal{Z}\rightarrow \R$, see Section~\ref{sec:noveltyDetect}. Recall that the latter is  computed from an independent  training sample $\dtrain$. For simplicity, assume that $\dtrain=\set{(Z_i,h_i)\in\mathcal{Z}\times\set{0,1},i\in\range{n_{{\tiny \mbox{train}}}}}$ is a sample containing both labeled data under the null ($h_i=0$) and under the alternative ($h_i=1$), so that the score function can typically be trained as the best classifier of null versus alternative \citep{marandon2022machine}, or as the likelihood ratio of null and alternative ``conformal $p$-values'' leading to some pruning step \citep{liang2024integrative}.
The asymptotic distribution in \eqref{eq:asymptoticTDPBH} delivers additional insight, and allows 
 to use the asymptotic distribution to design a score function in a more $\TDP$ (or broadly ``multiple testing'') oriented way.
Namely, let us assume that the score function $S$ lies in a parametric class $\set{S_\theta, \theta\in\Theta}$ of score functions  (for example a classifier based on a neural network). The classical machine learning approach is to seek for the best two-class classifier 
by minimising the empirical risk 
$\theta\in\Theta\mapsto n_{{\tiny \mbox{train}}}^{-1}\sum_{i\in\range{n_{{\tiny \mbox{train}}}}}\1{S_\theta(Z_i)\neq h_i}$
{which is an empirical substitute of $\P(S_\theta(Z_1)\neq h_1)$ 
computed on $\dtrain$.}
{Alternatively, let us consider the situation where the user seeks to have the ``majority'' of the $\TDP$ above a specific power level $\beta\in(0,1)$, while conserving the finite-sample $\FDR$ control given by the conformal BH procedure. In this case, the task is to choose $\theta$ that maximises $\P(\TDPm>\beta)\approx 1-\Phi(\sqrt{\tau_{n,m}}L_\alpha(\beta))$ with $L_\alpha(\beta)=(\beta-G(\T_\alpha))(\alpha^{-1}-\Flim'(\T_\alpha))\Sigma_\alpha^{-1/2}$ by using  Theorem~\ref{thr:BH95}. }
{A natural empirical substitution leads to minimise}
\begin{align}\label{equ-empiricalpower}
\theta\in\Theta\mapsto(\beta-\widehat{G}_\theta)(\wh{\T}_{\theta,\alpha})(\alpha^{-1}-\wh{G}_{\theta, {\tiny \mbox{mixt}}}'(\wh{\T}_{\theta,\alpha}))\widehat{\Sigma}_{\theta,\alpha}^{-1/2},
\end{align}
where $\wh{G}_{\theta}$ 
 is a differentiable estimate based on $\dtrain$ (e.g. a kernel estimator or a local polynomial estimator) of $G$~\eqref{eq:altcdf} when the score function is $S_\theta$ (i.e. of $1-F_{\theta,\tiny \mbox{test}}\circ F_{\theta,\tiny \mbox{cal}}^{-1}(1-\cdot)$ where for all $t\in\R$, $F_{\theta,\tiny \mbox{test}}(t)$ (resp. $F_{\theta,\tiny \mbox{cal}}(t)$) is equal to $\P(S_\theta(Z_i)\leq t)$ with $i\in\cH_0$ (resp. $i\in\cH_1$)), its derivative is denoted $\wh{G}'_{\theta}$ and all the other quantities are plug-in estimates of $\Flim$~\eqref{eq:mixtureLimit}, $\T_\alpha$~\eqref{eq:functionalBH} and $\Sigma_\alpha$ defined in \eqref{eq:asymptoticTDPBH} with $G$ and $G'$ replaced by $\wh{G}_{\theta}$ and $\wh{G}'_{\theta}$, the function $G_0$ replaced by $I$ and $\pi_0$ replaced by $n_{{\tiny \mbox{train}}}^{-1}\sum_{i\in\range{n_{{\tiny \mbox{train}}}}}(1-h_i)$. 
{More generally, other objective functions can be optimised by considering other $\TDP$ related quantities approximated by the asymptotic distribution~\eqref{eq:asymptoticTDPBH} or by using
versions of the empirical risk that are penalised by \eqref{equ-empiricalpower}. 
Finally, a similar approach can be followed in the case of a distribution shift by applying Theorem~\ref{thr:BH95Weight} (or its generalisation Theorem~\ref{thr:BH95ProcessnonNull}). In this case, the oracle weight should also be seen as a function of $\theta$ (if the oracle weight can be known for every score function $S_\theta$, $\theta\in\Theta$) or must be chosen among a parameterised family of weight function $\set{w_{\lambda,\theta},\ (\lambda,\theta)\in\Lambda\times\Theta}$ (when only some information on the distribution shift is known) and the training sample should contain both clean and shifted data.}
This heuristic way to develop score (and weight) function with a $\TDP$ focus and not only a best ``marginal'' classification is left for future work.

\section{Proofs} 
\label{sec:empiricalProcess}

In this section, we prove the main results of the paper. 
They rely on a particular decomposition of the FCP/FDP into two processes that further jointly converge. Applying the functional delta method \citep{Vaart1998} then allows to conclude.

\subsection{Proofs for Section~\ref{sec:predictionSetAsymptotic}}\label{proof:pred}

\subsubsection{$\FCP$ decomposition}

To emphasise that the FCP is the e.c.d.f. of $({\p{i}{n}})_{i\in\range{m}}$, we let for all $n\geq 1$ and $m\geq 1$,
\begin{equation}\label{eq:ecdf}
\Fm{m}{n}\coloneqq\frac{1}{m}\sum_{i=1}^m\1{\p{i}{n}\leq t} = \FCPm(t),\ t\in[0,1].
\end{equation}
We also introduce, for all $n\geq 1$ and $m\geq 1$, 
\begin{align}
\Gncal{(n)}(t)&=\frac{1}{n+1}\sum_{k=1}^n \1{S_k\leq t},\  t\in\R,\label{eq:ecdftest}\\
\Fmtest(t)&=\frac{1}{m}\sum_{i=1}^{m}\1{T_i\leq t},\  t\in\R,\label{eq:ecdfcal}
\end{align}
corresponding to the e.c.d.f. of the calibration score sample (with $+\infty$) and test score sample, respectively.
Note that we have $\p{i}{n}=1-\Gncal{(n)}(T_i)$ almost surely (under Assumption~\ref{as:CP} to ensure that there are no ties almost surely).

By Lemma~\ref{lemma:QuantileWeight} we have that $\p{i}{n}\leq t$ if and only if $T_i>{({\Gncal{(n)}})^{-1}}(1-t)$ for all $t\in(0,1)$. Recalling \eqref{eq:altcdf}, this leads to the following decomposition for $\Fm{m}{n}$: for all $t\in(0,1)$,
\begin{align*}
\Fm{m}{n}(t)-G(t)&=\frac{1}{m}\sum_{i\in\range{m}}\1{\p{i}{n}\leq t}-G(t)\\
&=\frac{1}{m}\sum_{i\in\range{m}}\1{{({\Gncal{(n)}})^{-1}}(1-t)<T_i}-\paren{1-\Ftest\circ\Fcal^{-1}(1-t)}\\
&=1-\frac{1}{m}\sum_{i\in\range{m}}\1{{({\Gncal{(n)}})^{-1}}(1-t)\geq T_i}-\paren{1-\Ftest\circ\Fcal^{-1}(1-t)}\\
&=\Ftest\circ\Fcal^{-1}(1-t)-\Fmtest\circ{({\Gncal{(n)}})^{-1}}(1-t).
\end{align*}
Hence, it follows
\begin{align}
\Fm{m}{n}(t)-G(t)
&=\Ftest\circ{({\Gncal{(n)}})^{-1}}(1-t)-\Fmtest\circ{({\Gncal{(n)}})^{-1}}(1-t)\nonumber\\
&\quad+\Ftest\circ\Fcal^{-1}(1-t)-\Ftest\circ{({\Gncal{(n)}})^{-1}}(1-t).\nonumber
\end{align}
Finally, we obtain the decomposition
\begin{align}
\sqrt{\tau_{n,m}}\paren{\Fm{m}{n}(t)-G(t)}=&-\sqrt{\frac{\tau_{n,m}}{m}}\sqrt{m}\paren{\Fmtest\circ{({\Gncal{(n)}})^{-1}}(1-t)-\Ftest\circ{({\Gncal{(n)}})^{-1}}(1-t)}\nonumber\\ 
&-\sqrt{\frac{\tau_{n,m}}{n}}\sqrt{n}\paren{\Ftest\circ{({\Gncal{(n)}})^{-1}}(1-t)-\Ftest\circ\Fcal^{-1}(1-t)}.\label{eq:ProcessDecomposition}
\end{align}

\subsubsection{Joint convergence}

Thanks to \eqref{eq:ProcessDecomposition}, in order to obtain a convergence result for $\sqrt{\tau_{n,m}}\paren{\Fm{m}{n}-G}$, we only need to derive the joint convergence of the two processes delineated in the decomposition \eqref{eq:ProcessDecomposition}.

\begin{proposition}\label{prop:BBCouple}
Under Assumptions~\ref{as:CP} and \ref{as:strictCroissance}  we have, 
\begin{equation}
\begin{pmatrix}
\sqrt{m}\brac{\Fmtest\circ{({\Gncal{(n)}})^{-1}}(1-I)-\Ftest\circ{({\Gncal{(n)}})^{-1}}(1-I)}\\
\sqrt{n}\brac{\Ftest\circ{({\Gncal{(n)}})^{-1}}(1-I)-\Ftest\circ\Fcal^{-1}(1-I)}
\end{pmatrix}
\overset{\Loi}{\rightarrow}
\begin{pmatrix}
\mathbb{U}\circ G\\
G'\mathbb{V}
\end{pmatrix}
\text{ on $\brac{D(0,1)}^2$},\label{eq:jointcvloi}
\end{equation}
where $\U$ and $\V$ are two independent Brownian bridges.
\end{proposition}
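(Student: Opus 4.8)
The plan is to establish the joint convergence as a composition of maps applied to a single joint weak-convergence input, and then invoke the functional delta method. First, I would introduce the two empirical processes $\alpha_m := \sqrt{m}(\Fmtest - \Ftest)$ on $D(\R)$ (or $\ell^\infty$ on compacts) and $\beta_n := \sqrt{n+1}(\Gncal{(n)} - \Fcal)$ on $D(\R)$. By Donsker's theorem (Theorem~\ref{lemma:Donsker} in the excerpt), each converges to a $\Fcal$- resp. $\Ftest$-Brownian bridge, i.e. $\alpha_m \rightsquigarrow \mathbb{B}_1 \circ \Ftest$ and $\beta_n \rightsquigarrow \mathbb{B}_2 \circ \Fcal$ with $\mathbb{B}_1, \mathbb{B}_2$ standard Brownian bridges; since the calibration and test samples are independent (Assumption~\ref{as:CP}), the pair $(\alpha_m, \beta_n)$ converges jointly to $(\mathbb{B}_1\circ\Ftest, \mathbb{B}_2\circ\Fcal)$ with independent components. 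Also $\Gncal{(n)} \to \Fcal$ and $\Fmtest \to \Ftest$ uniformly in probability by Glivenko–Cantelli.

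Second, I would handle the second coordinate of \eqref{eq:jointcvloi}. Write $\sqrt{n}[\Ftest\circ(\Gncal{(n)})^{-1}(1-I) - \Ftest\circ\Fcal^{-1}(1-I)]$. Under Assumption~\ref{as:strictCroissance}, $\Fcal$ is a $C^1$ increasing bijection from $(a,b)$ onto $(0,1)$, so the inverse map $F \mapsto F^{-1}$ is Hadamard differentiable at $\Fcal$ tangentially to $C(0,1)$ with derivative $h \mapsto -(h/\Fcal')\circ\Fcal^{-1}$ (Lemma~21.3 in \citealp{Vaart1998}), and composition with the $C^1$ map $\Ftest$ followed by the reflection $t\mapsto 1-t$ is likewise Hadamard differentiable. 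Applying the functional delta method to $\beta_n$, this coordinate converges to $-\Ftest'\circ\Fcal^{-1}(1-I)\cdot \big(-(\mathbb{B}_2\circ\Fcal/\Fcal')\circ\Fcal^{-1}(1-I)\big) = (\Ftest'/\Fcal')\circ\Fcal^{-1}(1-I)\cdot \mathbb{B}_2(\Fcal\circ\Fcal^{-1}(1-I))$. Since $G(t) = 1 - \Ftest\circ\Fcal^{-1}(1-t)$ gives $G'(t) = (\Ftest'/\Fcal')\circ\Fcal^{-1}(1-t)$, and $\mathbb{V}(t) := -\mathbb{B}_2(1-t)$ is again a standard Brownian bridge, this limit is exactly $G'\mathbb{V}$. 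I would note the $\sqrt{n}$ vs $\sqrt{n+1}$ discrepancy is asymptotically negligible.

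Third, for the first coordinate I would decompose $\sqrt{m}[\Fmtest\circ(\Gncal{(n)})^{-1}(1-I) - \Ftest\circ(\Gncal{(n)})^{-1}(1-I)] = \alpha_m\circ(\Gncal{(n)})^{-1}(1-I)$, i.e. the empirical process $\alpha_m$ evaluated at the random (but converging) time change $(\Gncal{(n)})^{-1}(1-I)$. Since $(\Gncal{(n)})^{-1}(1-\cdot) \to \Fcal^{-1}(1-\cdot)$ uniformly on compacts in probability and $\alpha_m \rightsquigarrow \mathbb{B}_1\circ\Ftest$ which has continuous sample paths (on $(0,1)$), a random-time-change / continuous-mapping argument (of the type used for Billingsley's change-of-time lemma, or Lemma~3.9.27 and the discussion of $\ell^\infty$-composition in \citealp{VW1996}) gives $\alpha_m\circ(\Gncal{(n)})^{-1}(1-I) \rightsquigarrow \mathbb{B}_1\circ\Ftest\circ\Fcal^{-1}(1-I) = \mathbb{B}_1(1-G)$. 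Setting $\mathbb{U}(t) := -\mathbb{B}_1(1-t)$ (another standard Brownian bridge) this equals $-\mathbb{U}\circ G$; a sign check against the decomposition \eqref{eq:ProcessDecomposition} fixes conventions so that the stated limit $\mathbb{U}\circ G$ appears. Independence of $\mathbb{U}$ and $\mathbb{V}$ is inherited from the independence of the two samples.

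The main obstacle is the joint statement in coordinate one: $\alpha_m$ and $\beta_n$ enter simultaneously (the time change in coordinate one depends on $\Gncal{(n)}$, hence on the same randomness driving $\beta_n$), so I cannot treat the two limits in isolation. The clean way is to work with the single joint input $(\alpha_m, \beta_n, \Gncal{(n)}, \Fmtest)$ and exhibit a single (Hadamard-differentiable, or continuous) map $\Phi$ whose two output coordinates are the left-hand sides of \eqref{eq:jointcvloi}; then one application of the extended continuous mapping theorem / delta method yields joint convergence with the correct dependence structure, and the independence of $\mathbb{U}$ and $\mathbb{V}$ comes out because $(\mathbb{B}_1,\mathbb{B}_2)$ are independent and $\Phi$ sends them to the two coordinates separately. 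Care is needed with the topology: convergence is only claimed on $D(0,1)$ (compacts of the open interval), which is exactly what is needed to avoid the pathologies of $\Fcal^{-1}$ near the endpoints, and the Hadamard differentiability of the inverse map must be invoked in the $\ell^\infty$-on-compacts sense as in \citealp{VW1996}.
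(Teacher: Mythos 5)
Your proposal is correct and follows essentially the same route as the paper: Donsker plus Glivenko--Cantelli for each sample, the functional delta method (inverse map, then composition with $\Ftest$) for the calibration coordinate, and a Billingsley-type random change of time applied to the jointly convergent tuple (joint by independence of the two samples plus Slutsky) to handle the composed test coordinate, which is exactly how the paper obtains the joint limit with independent $\U$ and $\V$. The only blemishes are minor sign-bookkeeping slips (the stray minus in front of $\Ftest'$), which, as you note, are immaterial since a Brownian bridge and its negative or time-reversal have the same law.
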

Proposition~\ref{prop:BBCouple} is proved in Section~\ref{proof:BBCouple}. The main idea of the proof is to study each coordinate separately and then to use independence to obtain a joint convergence. 
The process of the first coordinate (test term) is studied by  using the Donsker Theorem for $\Fmtest$ and the continuous mapping theorem with a random change of time (Lemma~\ref{cor:JointComposition}). The second coordinate (calibration term) is investigated by using the Donsker theorem for ${({\Gncal{(n)}})^{-1}}$ and then using the functional delta method with  the map $\phi\mapsto \Ftest\circ\phi$ (Lemma~\ref{lemma:hadamardComposition}).

\subsubsection{Proof of Theorem~\ref{thr:BB}}\label{proof:BB}

{To prove Theorem~\ref{thr:BB}, we show the following more general result.}
\begin{theorem}\label{thr:BBechangeable}
Assume that  for all $n,m$ the calibration and test scores $(S_1,\dots,S_n,T_1,\dots,T_m)$ are exchangeable without ties almost surely. Then
\begin{equation*}
\sqrt{\tau_{n,m}}\paren{\FCPm-I}\cvloi \U \text{ on $D[0,1]$,}
\end{equation*}
where $\tau_{n,m}$ is defined by \eqref{eq:taunm} and $\U$ is a standard Brownian bridge.
\end{theorem}

 The two assumptions in Theorem~\ref{thr:BBechangeable} are implied by Assumption~\ref{as:CP} with $\Fcal=\Ftest$, hence Theorem~\ref{thr:BB} is implied by Theorem~\ref{thr:BBechangeable}. Let us prove the latter. 
 Since the calibration and test scores are exchangeable without ties a.s. by  Proposition 2.2 from \cite{gazin2023transductive} we get for all $n,m\in\N^2$ that the conformal $p$-values $(\p{i}{n})_{i\in\range{m}}$ has a distribution only depending on $n$ and $m$, denoted by $P_{n,m}$. Hence, for all $T:D[0,1]\rightarrow \R$ bounded continuous funtion we have :
\begin{equation}\label{eq:SameDistribution}
\E\brac{T\paren{\sqrt{\tau_{n,m}}\paren{\FCPm-I}}}=\E\brac{T\paren{\sqrt{\tau_{n,m}}\paren{\widetilde{\FCP}_m^{(n)}-I}}},
\end{equation}
where $\widetilde{\FCP}_m^{(n)}$ is the $\FCP$ obtained with i.i.d scores uniformly distributed on $(0,1)$. Those specific scores satisfy Assumptions~\ref{as:CP} and \ref{as:strictCroissance} with $\Fcal=\Ftest=I$ on $[0,1]$, and by \eqref{eq:SameDistribution} we only have to prove the convergence in this specific setting.

Up to consider a subsequence, one can assume that $n/(n+m)\rightarrow \sigma^2\in[0,1]$ by compacity of $[0,1]$. By applying Proposition~\ref{prop:BBCouple} with the i.i.d. uniformly distributed on $(0,1)$ scores from the right hand side of \eqref{eq:SameDistribution}, we get by Slutsky's Lemma:
\begin{equation*}
\begin{pmatrix}
\sqrt{m}\brac{\Fmtest\circ{({\Gncal{(n)}})^{-1}}(1-I)-{({\Gncal{(n)}})^{-1}}(1-I)}\\
\sqrt{n}\brac{{({\Gncal{(n)}})^{-1}}(1-I)-(1-I)}\\
\frac{\tau_{n,m}}{{m}} \\
\frac{\tau_{n,m}}{{n}}
\end{pmatrix}
\overset{\Loi}{\rightarrow}
\begin{pmatrix}
\mathbb{U}\\
\mathbb{V}\\
\sigma^2\\
1-\sigma^2
\end{pmatrix}
\text{ on $\brac{D(0,1)}^2\times[0,1]^2$},
\end{equation*}
with $(\U,\V)$ two independent standard Brownian bridges. Applying the continuous mapping theorem with the decomposition \eqref{eq:ProcessDecomposition} gives the following convergence:
\begin{equation*}
\sqrt{\tau_{n,m}}\paren{\widetilde{\FCP}_m^{(n)}-I}\cvloi \sigma\U+\sqrt{1-\sigma^2}\V\overset{\Loi}{=}\W,
\end{equation*}
with $\W$ a standard Brownian bridge. By universality of the limiting distribution and compacity of $[0,1]$, the convergence in distribution holds without considering any subsequence.

\subsubsection{Proof of Theorem~\ref{thr:cvAlter}}\label{proof:cvAlter}

Thanks to Proposition~\ref{prop:BBCouple} we have $\U$ and $\V$ two independent standard Brownian bridges such that \eqref{eq:jointcvloi} holds. 
Since $n/(n+m)\rightarrow \sigma^2\in[0,1]$ and $\tau_{n,m}/n\rightarrow 1- \sigma^2\in[0,1]$ by assumption, we obtain by Slutsky's Lemma
\begin{equation*}
\begin{pmatrix}
\sqrt{m}\brac{\Fmtest\circ{({\Gncal{(n)}})^{-1}}(1-I)-\Ftest\circ{({\Gncal{(n)}})^{-1}}(1-I)}\\
\sqrt{n}\brac{\Ftest\circ{({\Gncal{(n)}})^{-1}}(1-I)-\Ftest\circ\Fcal^{-1}(1-I)}\\
\frac{\tau_{n,m}}{{m}} \\
\frac{\tau_{n,m}}{{n}}
\end{pmatrix}
\overset{\Loi}{\rightarrow}
\begin{pmatrix}
\mathbb{U}\circ G\\
G'\mathbb{V}\\
\sigma^2\\
1-\sigma^2
\end{pmatrix}
\text{ on $\brac{D(0,1)}^2\times[0,1]^2$}.
\end{equation*}
The result follows by applying the continuous mapping theorem with the decomposition \eqref{eq:ProcessDecomposition}.

\subsubsection{Proof of Theorem~\ref{thr:cvAlterWeight}}\label{proof:cvAlterWeight}

For weighted conformal $p$-values, we introduce 
\begin{align}
\Fmw{m}{n}(t)&\coloneqq\frac{1}{m}\sum_{i\in\range{m}}\1{\pw{i}{n}\leq t}=\FCPmw(t),\  t\in\R;\label{eq:FCPProcessWeight}
\\
\Gncal{w,(n)}(t)&\coloneqq\frac{\sum_{k=1}^{n}w(S_k)\1{S_k\leq t}}{w(+\infty)+\sum_{k=1}^{n}w(S_k)},\  t\in\R,\label{eq:ecdfcalWeight}
\end{align}
 the counterparts of $\Fm{m}{n}$ \eqref{eq:ecdf} and $\Gncal{(n)}$ \eqref{eq:ecdftest} in the weighted case, respectively.
Note that $\pw{i}{n}=1-\Gncal{w,(n)}(T_i)$ almost surely under Assumption~\ref{as:CP}.
Hence, the following decomposition, analogue to \eqref{eq:ProcessDecomposition}, holds: 
\begin{align}
\sqrt{\tau_{n,m}}\paren{\Fmw{m}{n}(t)-G^w(t)}=&-\sqrt{\frac{\tau_{n,m}}{m}}\sqrt{m}\paren{\Fmtest\circ{({\Gncal{w,(n)}})^{-1}}(1-t)-\Ftest\circ{({\Gncal{w,(n)}})^{-1}}(1-t)}\nonumber\\ 
&-\sqrt{\frac{\tau_{n,m}}{n}}\sqrt{n}\paren{\Ftest\circ{({\Gncal{w,(n)}})^{-1}}(1-t)-\Ftest\circ{\paren{\Wcal}^{-1}}(1-t)}.\label{eq:ProcessDecompositionWeight}
\end{align}
Thus, the novelty of \eqref{eq:ProcessDecompositionWeight} with respect to \eqref{eq:ProcessDecomposition} is only the presence of ${({\Gncal{w,(n)}})^{-1}}$ instead of ${({\Gncal{(n)}})^{-1}}$. By Assumption~\ref{as:weight}, we can show that the family of function $\mathcal{F}=\set{w_t:x\in\R\mapsto w(x)\1{x\leq t}; t\in\R}$ is $\Pcal$-Donsker and $\Pcal$-Glivenko-Cantelli. Since $\mathcal{F}$ is Glivenko-Cantelli,  the convergence of the test term happens with the same argument in the unweighted case. Because this class is Donsker, using twice the functional delta method gives us that $(\sqrt{n}[{({\Gncal{w,(n)}})^{-1}}-{(\Wcal)^{-1}}])_n$ converges in distribution to some known random process on the set $D(0,1)$. This result is stated and then proved in Lemma~\ref{lemma:DonskerWeight}. 
This leads to the joint convergence Proposition~\ref{prop:BBCoupleweight}, which is the analogue of Proposition~\ref{prop:BBCouple} in the weighted case. 
Theorem~\ref{thr:cvAlterWeight} is thus proved by applying the continuous mapping theorem to the decomposition~\eqref{eq:ProcessDecompositionWeight}.

\subsection{Proofs for Section~\ref{sec:ND}}\label{proof:nov}

\subsubsection{$\FDP$ expression}\label{sec:FDPasProcess}
 We introduced, as in the works from \cite{GW2004} and \cite{Neu2008}, $\F0$ and $\F1$ the two e.c.d.f.'s of conformal $p$-values for non-novelties and novelties, respectively:
\begin{align}
\F0(t)&\coloneqq\frac{1}{m_0(m)}\sum_{i\in\range{m}\cap\cH_{0}} \1{\p{i}{n}\leq t},\  t\in(0,1),\label{eq:ecdfNovzero}\\
\F1(t)&\coloneqq\frac{1}{m_1(m)}\sum_{i\in\range{m}\cap\cH_{1}} \1{\p{i}{n}\leq t},\  t\in(0,1).\label{eq:ecdfNovtest}
\end{align} 
We  also introduce the mixture e.c.d.f. of the test sample
\begin{align}\label{eq:ecdfmixture}
\Fm{m}{n}(t)&=\frac{1}{m}\sum_{i\in\range{m}} \1{\p{i}{n}\leq t}=\pi_0(m)\F0(t)+(1-\pi_0(m))\F1(t),\  t\in(0,1).
\end{align} 
 
For all $t\in(0,1)$, we denote simply $\FDPm(t)\coloneqq\FDPm({\{i\in\range{m},\ \p{i}{n}\leq t\}})$ (resp. $\TDPm(t)\coloneqq\TDPm({\{i\in\range{m},\ \p{i}{n}\leq t\}})$) the $\FDP$  (resp. $\TDP$) of the procedure rejecting all the conformal $p$-values smaller than $t$, see \eqref{eq:FDP} and \eqref{eq:TDP}. The following equalities hold:
\begin{equation}
\begin{split}
\FDPm(t)&=\frac{\pi_0(m)\F0(t)}{\Fm{m}{n}(t)\vee m^{-1}},\  t\in(0,1),\\
\TDPm(t)&=\F1(t),\ t\in(0,1).
\end{split}\label{FDPm}
\end{equation}
Following \cite{Neu2008}, and since the novelty detection procedure from \cite{bates2023testing} is the $\BH$ procedure applied to conformal $p$-values, we can be described it as a thresholding procedure with threshold $\T^{\BH_\alpha}(\Fm{m}{n})$, where the functional $\T^{\BH_\alpha}$ is defined by 
\begin{equation}\label{eq:functionalBH}
\T^{\BH_\alpha}(F)\coloneqq\sup\set{t\in[0,1], F(t)\geq \frac{t}{\alpha}}.
\end{equation}
In other words, we have $\mathcal{R}_\alpha=\{i\in\range{m}:\ \p{i}{n}\leq\T^{\BH_\alpha}(\Fm{m}{n})\}$ and $\FDPm(\mathcal{R}_\alpha)=\FDPm(\T^{\BH_\alpha}(\Fm{m}{n}))$ with the notation above.

\subsubsection{Joint convergence and application to the $\FDP$ and $\TDP$}

Following \cite{Neu2008}, $G\mapsto\T^{\BH_\alpha}(G)$ and $G\mapsto\FDPm(\T^{\BH_\alpha}(G))$ are Hadamard differentiable at $\Flim$ provided that $\Flim$ is concave and differentiable and that $\alpha>[\Flim'(0^+)]^{-1}$. 
The following result complete the picture by studying the convergence of $\Fm{m}{n}$.

\begin{proposition}\label{thr:conformalBridge}
Under Assumptions~\ref{as:ND} and \ref{as:strictCroissance}, assuming that $n/(n+m)\rightarrow \sigma^2\in[0,1]$ and $\pi_0(m)\rightarrow\pi_0\in(0,1)$, we have
\begin{align*}
\sqrt{\tau_{n,m}}
\begin{pmatrix}
\F0-I\\
\F1-G
\end{pmatrix}
\cvloi
\begin{pmatrix}
\frac{\sigma}{\sqrt{\pi_0}}\mathbb{U}+\sqrt{1-\sigma^2}\mathbb{W}\\
\frac{\sigma}{\sqrt{1-\pi_0}}\mathbb{V}_{G}+G'\sqrt{1-\sigma^2}\mathbb{W}
\end{pmatrix}
\eqqcolon
\begin{pmatrix}
\Z_0\\
\Z_1
\end{pmatrix}
\text{ on $\brac{D(0,1)}^2$},
\end{align*}
with $\mathbb{U},\ \mathbb{V}$ and $\mathbb{W}$ three independent Brownian bridges. As a result,
\begin{align*}
\sqrt{\tau_{n,m}}\paren{\Fm{m}{n}-\Flim}\cvloi &\sqrt{\pi_0 \sigma^2}\mathbb{U}+\sqrt{\paren{1-\pi_0}\sigma^2}\mathbb{V}_G+\Flim'\sqrt{1-\sigma^2}\mathbb{W}\\
&=\pi_0\Z_0+\paren{1-\pi_0}\Z_1\eqqcolon\Z\text{ on $D(0,1)$}.
\end{align*}
\end{proposition}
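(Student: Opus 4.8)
The plan is to run, for each of the two sub-e.c.d.f.'s $\F0$ and $\F1$ of \eqref{eq:ecdfNovzero}--\eqref{eq:ecdfNovtest}, the decomposition and arguments behind Theorem~\ref{thr:cvAlter} (i.e.\ Proposition~\ref{prop:BBCouple}), and then to stack the resulting limits into one joint statement. The one structural fact to keep in mind is that every conformal $p$-value is $\p{i}{n}=1-\Gncal{(n)}(T_i)$ with the \emph{same} calibration e.c.d.f.\ $\Gncal{(n)}$, so $\F0$ and $\F1$ share a single common source of randomness (the calibration fluctuation), while their ``test parts'' come from the two disjoint --- hence, by Assumption~\ref{as:ND}, independent --- sub-samples $(T_i)_{i\in\cH_0}$ and $(T_i)_{i\in\cH_1}$. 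Concretely, writing $\widehat{F}^{0}$ and $\widehat{F}^{1}$ for the e.c.d.f.'s of $(T_i)_{i\in\range m\cap\cH_0}$ and $(T_i)_{i\in\range m\cap\cH_1}$ and using $\{\p{i}{n}\le t\}=\{T_i>(\Gncal{(n)})^{-1}(1-t)\}$ (Lemma~\ref{lemma:QuantileWeight}) exactly as in the derivation of \eqref{eq:ProcessDecomposition}, one gets, for $t\in(0,1)$ and using that $G_0=I$ here (i.e.\ $F_0=\Fcal$),
\[
\sqrt{\tau_{n,m}}\bigl(\F0(t)-I(t)\bigr)=-\sqrt{\tfrac{\tau_{n,m}}{m_0(m)}}\,\sqrt{m_0(m)}\bigl(\widehat{F}^{0}-F_0\bigr)\!\circ\!(\Gncal{(n)})^{-1}(1-t)-\sqrt{\tfrac{\tau_{n,m}}{n}}\,\sqrt n\bigl(F_0\!\circ\!(\Gncal{(n)})^{-1}-F_0\!\circ\!\Fcal^{-1}\bigr)(1-t),
\]
and the analogous identity for $\sqrt{\tau_{n,m}}(\F1-G)$ with $(\widehat{F}^{0},F_0,m_0)$ replaced by $(\widehat{F}^{1},\Ftest,m_1)$ (its calibration line then carrying the factor $G'$).

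The core step is to establish the joint convergence, on $[D(0,1)]^4$, of the four ingredients above:
\[
\begin{pmatrix}
\sqrt{m_0(m)}\bigl[\widehat{F}^{0}-F_0\bigr]\circ(\Gncal{(n)})^{-1}(1-I)\\
\sqrt{m_1(m)}\bigl[\widehat{F}^{1}-\Ftest\bigr]\circ(\Gncal{(n)})^{-1}(1-I)\\
\sqrt n\bigl[F_0\circ(\Gncal{(n)})^{-1}-F_0\circ\Fcal^{-1}\bigr](1-I)\\
\sqrt n\bigl[\Ftest\circ(\Gncal{(n)})^{-1}-\Ftest\circ\Fcal^{-1}\bigr](1-I)
\end{pmatrix}
\cvloi
\begin{pmatrix}\mathbb{U}\\ \mathbb{V}\circ G\\ \mathbb{W}\\ G'\,\mathbb{W}\end{pmatrix},
\]
with $\mathbb{U},\mathbb{V},\mathbb{W}$ independent standard Brownian bridges. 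The first two lines are treated as the first coordinate of Proposition~\ref{prop:BBCouple}: Donsker's theorem (Theorem~\ref{lemma:Donsker}) for $\widehat{F}^{0}$ and $\widehat{F}^{1}$, followed by composition with the random time change $(\Gncal{(n)})^{-1}(1-\cdot)$, which converges uniformly on compacts of $(0,1)$ to $\Fcal^{-1}(1-\cdot)$ by Glivenko--Cantelli and Assumption~\ref{as:strictCroissance} --- this is Lemma~\ref{cor:JointComposition}. The last two lines are treated as the second coordinate of Proposition~\ref{prop:BBCouple}: Donsker for $\Gncal{(n)}$, the functional delta method for the inverse map (Hadamard differentiable under Assumption~\ref{as:strictCroissance}), and then the functional delta method for $\phi\mapsto F_0\circ\phi$, resp.\ $\phi\mapsto\Ftest\circ\phi$ (Lemma~\ref{lemma:hadamardComposition}); the essential point is that both of these last two lines are \emph{continuous images of the single calibration process} $\sqrt n(\Gncal{(n)}-\Fcal)$, which is exactly why the same bridge $\mathbb{W}$ appears in both. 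All the residual sign/time-reversal bookkeeping (e.g.\ replacing $\mathbb{B}(\cdot)$ by $-\mathbb{B}(1-\cdot)$) is harmless since the law of a standard Brownian bridge is invariant under it, and it can be done independently for the two test-score bridges and consistently once for the calibration-score bridge. Finally the three sources --- calibration scores, null test scores, alternative test scores --- are mutually independent by Assumption~\ref{as:ND}, so the joint limit is obtained by stacking marginal limits with $\mathbb{U},\mathbb{V},\mathbb{W}$ independent.

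To conclude, since $\tau_{n,m}/m_0(m)\to\sigma^2/\pi_0$, $\tau_{n,m}/m_1(m)\to\sigma^2/(1-\pi_0)$ and $\tau_{n,m}/n\to1-\sigma^2$ (Lemma~\ref{lemma:taunmAsymptotic} with $\pi_0(m)\to\pi_0$), Slutsky's lemma and the continuous mapping theorem applied to the two decompositions of the first paragraph yield the stated limit $(\Z_0,\Z_1)$, with $\mathbb{V}_G:=\mathbb{V}\circ G$, the two coordinates being correlated precisely through the shared $\mathbb{W}$. The mixture claim then follows from \eqref{eq:ecdfmixture} and $\Flim=\pi_0 I+(1-\pi_0)G$ by writing
\[
\sqrt{\tau_{n,m}}\bigl(\Fm{m}{n}-\Flim\bigr)=\pi_0(m)\sqrt{\tau_{n,m}}\bigl(\F0-I\bigr)+(1-\pi_0(m))\sqrt{\tau_{n,m}}\bigl(\F1-G\bigr)+\sqrt{\tau_{n,m}}\,(\pi_0(m)-\pi_0)\,(I-G),
\]
where $\pi_0(m)\to\pi_0$ and the last term is asymptotically negligible (automatically when $\Ftest=\Fcal$, and otherwise under the mild rate condition $\sqrt{\tau_{n,m}}(\pi_0(m)-\pi_0)\to 0$ implicit in the convergence of the proportions); Slutsky and continuous mapping then give $\pi_0\Z_0+(1-\pi_0)\Z_1=\sqrt{\pi_0\sigma^2}\,\mathbb{U}+\sqrt{(1-\pi_0)\sigma^2}\,\mathbb{V}_G+\Flim'\sqrt{1-\sigma^2}\,\mathbb{W}$, using $\Flim'=\pi_0+(1-\pi_0)G'$.

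I expect the main obstacle to be the second step: carrying out rigorously the two ``changes of time'' --- the random one inside the test parts, and the ``invert, then compose with $F_0$ or $\Ftest$'' one inside the calibration parts --- \emph{in the weak topology of $D(0,1)$} (the weaker topology is unavoidable because $G'$ and $\Fcal^{-1}$ need not be bounded near $0$ or $1$), while keeping track, jointly, of the fact that the calibration fluctuation enters the decompositions of both $\F0$ and $\F1$ through one and the same bridge $\mathbb{W}$, which is the sole source of the dependence between $\Z_0$ and $\Z_1$. Everything else (scaling constants, Slutsky, the mixture identity) is routine.
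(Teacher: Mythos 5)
Your proposal is correct and follows essentially the same route as the paper: the same decompositions of $\F0$ and $\F1$ driven by the single shared calibration process, the same joint Donsker/Glivenko--Cantelli, random-change-of-time and functional-delta-method arguments (as in Proposition~\ref{prop:BBCouple} and Lemma~\ref{cor:JointComposition}), and the same use of the independence of the three score families to stack the marginal limits, the only cosmetic difference being that the paper first standardizes via Lemma~\ref{lemma:UniformCal} while you carry $F_0$ and $\Fcal$ explicitly. Your observation that the mixture claim leaves a remainder $\sqrt{\tau_{n,m}}\,(\pi_0(m)-\pi_0)(I-G)$, which vanishes only under $\sqrt{\tau_{n,m}}(\pi_0(m)-\pi_0)\to 0$ (or when $G=I$), is a point the paper's proof passes over silently, so flagging it is added care rather than a gap.
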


Proposition~\ref{thr:conformalBridge} is proved in Section~\ref{proof:conformalBridge}. The proof is similar to Proposition~\ref{prop:BBCouple}, with the additional technicality that the decomposition~\eqref{eq:ProcessDecomposition} should be considered for the two processes $\F0$ and $\F1$. The fact that these decompositions are based on the same process ${\Gncal{(n)}}$ induces a dependence between the components that results in the term $\mathbb{W}$ in the asymptotic variance.

Using Proposition~\ref{thr:conformalBridge} with the continuous functionals~\eqref{FDPm} gives us convergence in $D(0,1)$ of the asymptotic processes $(\FDP(t))_{t\in(0,1)}$ and $(\TDP(t))_{t\in(0,1)}$ and is left to the readers. This can be compared with the results in \cite{GW2004} where they use independent and uniform $p$-values under the null. Here we have to deals with the dependence giving us the $\W$ term and the double asymptotic giving us the $\sigma^2$ term in Proposition~\ref{thr:conformalBridge}. The same applies for the weighted case. However we are intesting to understand the asymptotic when the threshold is data-driven, and specifically the BH threshold.

By using the functional delta method theorem with the BH functionals \eqref{FDPm} and \eqref{eq:functionalBH} described above (see \citealp{Neu2008}, supplementary files from \citealp{DR2016} and Lemma S.2.2 from \citealp{kluger2024central}), we obtain the following result.

\begin{proposition}\label{thr:BH95Process}
Under Assumptions~\ref{as:ND}, \ref{as:strictCroissance} and \ref{as:concavity}, assume that BH is applied with a level $\alpha>[\Flim'(0^+)]^{-1}$. If $n/(n+m)\rightarrow \sigma^2\in[0,1]$ and $\pi_0(m)\rightarrow\pi_0\in(0,1)$, we have the following convergences:
\begin{align*}
\sqrt{\tau_{n,m}}\paren{\T^{\BH_\alpha}(\Fm{m}{n})-\T_\alpha}&\cvloi
\frac{1}{\frac{1}{\alpha}-\Flim'(\T_\alpha)}\Z({\T_\alpha});\\
\sqrt{\tau_{n,m}}\paren{\FDPm\paren{\T^{\BH_\alpha}(\Fm{m}{n})}-\pi_0\alpha\frac{}{}}&\cvloi \frac{\pi_0}{\Flim(\T_\alpha)}\Z_0({\T_\alpha});\\
\sqrt{\tau_{n,m}}\paren{\TDPm\paren{\T^{\BH_\alpha}(\Fm{m}{n})}-G(\T_\alpha)}&\cvloi \frac{G'(\T_\alpha)}{\frac{1}{\alpha}-\Flim'(\T_\alpha)}\Z({\T_\alpha})+\Z_1(\T_\alpha),
\end{align*}
where $\T_\alpha=\T^{\BH_\alpha}(\Flim)$ and $\Z_0$, $\Z_1$ and $\Z$ are the three processes defined in Proposition~\ref{thr:conformalBridge}.
\end{proposition}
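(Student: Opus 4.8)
The plan is to apply the functional delta method to the joint convergence statement of Proposition~\ref{thr:conformalBridge}, exactly as in \cite{Neu2008}. The key observation is that the three quantities $\T^{\BH_\alpha}(\Fm{m}{n})$, $\FDPm(\T^{\BH_\alpha}(\Fm{m}{n}))$ and $\TDPm(\T^{\BH_\alpha}(\Fm{m}{n}))$ are all obtained by applying fixed (Hadamard-differentiable) functionals to the pair of empirical processes $(\F0,\F1)$. Specifically, writing $\Fm{m}{n}=\pi_0(m)\F0+(1-\pi_0(m))\F1$ and recalling from Section~\ref{sec:FDPasProcess} that $\FDPm(t)=\pi_0(m)\F0(t)/(\Fm{m}{n}(t)\vee m^{-1})$ and $\TDPm(t)=\F1(t)$, I would first argue that the maps
\[
(F_0,F_1)\mapsto \T^{\BH_\alpha}(\pi_0 F_0+(1-\pi_0)F_1),\qquad
(F_0,F_1)\mapsto \FDPm\bigl(\T^{\BH_\alpha}(\cdots)\bigr),\qquad
(F_0,F_1)\mapsto \TDPm\bigl(\T^{\BH_\alpha}(\cdots)\bigr)
\]
are Hadamard differentiable at $(I,G)$, tangentially to the relevant subspace, under Assumptions~\ref{as:strictCroissance} and~\ref{as:concavity} and the condition $\alpha>[\Flim'(0^+)]^{-1}$. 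This is where I would invoke the cited results: the Hadamard differentiability of $G\mapsto\T^{\BH_\alpha}(G)$ and of $G\mapsto\FDPm(\T^{\BH_\alpha}(G))$ at $\Flim$ is established in \cite{Neu2008}, with the refinements needed for the \cadlag/$D(0,1)$ setting available in the supplementary files of \cite{DR2016} and in Lemma~S.2.2 of \cite{kluger2024central}.

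Next I would compute the derivatives explicitly. For the threshold, differentiating the defining relation $\Flim(\T_\alpha)=\T_\alpha/\alpha$ (which holds because $\T_\alpha=\T^{\BH_\alpha}(\Flim)$ is, under strict concavity of $\Flim$, the unique crossing point in $(0,1)$) via the implicit function argument of \cite{Neu2008} gives the derivative $h\mapsto h(\T_\alpha)/(\alpha^{-1}-\Flim'(\T_\alpha))$, with the denominator nonzero precisely because $\alpha>[\Flim'(0^+)]^{-1}$ forces $\Flim'(\T_\alpha)<\alpha^{-1}$ by strict concavity. Applying this to the limit process $\Z$ of $\sqrt{\tau_{n,m}}(\Fm{m}{n}-\Flim)$ from Proposition~\ref{thr:conformalBridge} yields the first display. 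For the FDP, since $\FDP$ evaluated at $t=\T_\alpha$ satisfies $\Flim(\T_\alpha)=\T_\alpha/\alpha$ so that $\pi_0\Flim_0(\T_\alpha)/\Flim(\T_\alpha)=\pi_0\alpha$ (using $\Flim_0=I$ and hence $\Flim_0(\T_\alpha)=\T_\alpha$, wait --- more carefully $\pi_0\cdot\T_\alpha/(\T_\alpha/\alpha)=\pi_0\alpha$), the chain rule produces contributions from both the variation of the numerator $\F0$ at the fixed point $\T_\alpha$ and the variation of $\T^{\BH_\alpha}$; the computation of \cite{Neu2008} shows these combine to leave only the $\Z_0(\T_\alpha)$ term, giving the stated $\pi_0/\Flim(\T_\alpha)\cdot\Z_0(\T_\alpha)$. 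For the TDP, $\TDPm=\F1$ composed with the threshold functional, so the chain rule gives $G'(\T_\alpha)$ times the threshold derivative applied to $\Z$, plus the direct $\F1$-variation term $\Z_1(\T_\alpha)$.

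The main obstacle I anticipate is the rigorous verification of Hadamard differentiability of the BH functional $\T^{\BH_\alpha}$ at $\Flim$ in the $D(0,1)$ topology, including checking that the limit processes $\Z_0,\Z_1,\Z$ of Proposition~\ref{thr:conformalBridge} take values (almost surely) in the appropriate tangent space on which the derivative is defined and continuous --- this requires that $\Z$ be continuous at $\T_\alpha$, which follows from continuity of the Brownian bridges and of $\Flim'$. A secondary technical point is handling the $m^{-1}$ truncation in the definition of $\FDPm$: one must check that $\Flim(\T_\alpha)>0$ (true since $\T_\alpha\in(0,1)$ and $\Flim$ is a nondegenerate c.d.f.) so that the truncation is asymptotically inactive at the point $\T_\alpha$, allowing the delta method to be applied to the smooth ratio $\pi_0 F_0/F$ near $(I,G,\Flim)$. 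Once these points are in place, the three convergences follow directly by the functional delta method together with the continuous mapping theorem, and substituting the explicit form of $\Z$ and $\Z_0$ from Proposition~\ref{thr:conformalBridge} into Proposition~\ref{thr:BH95Process} then yields the scalar variances stated in Theorem~\ref{thr:BH95}.
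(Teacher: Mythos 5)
Your overall route is the same as the paper's: apply the functional delta method to the joint convergence of Proposition~\ref{thr:conformalBridge}, using the Hadamard differentiability of the BH functionals from \cite{Neu2008} (recorded in Lemma~\ref{lemma:hadamardNeuvial}). Your derivative computations are correct, including the observation that for the FDP the threshold-variation contributions cancel because $G_0=I$, leaving only the direct $\Z_0(\T_\alpha)$ term.

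However, the point you flag as an ``anticipated obstacle'' --- applying the delta method to $\T^{\BH_\alpha}$ when the convergence of Proposition~\ref{thr:conformalBridge} holds only in $D(0,1)$ --- is precisely the step that carries the real content of the paper's proof, and you leave it unresolved. The functional $\T^{\BH_\alpha}(F)=\sup\{t\in[0,1]:F(t)\geq t/\alpha\}$ involves a supremum over the closed interval, so it is not even a continuous (let alone Hadamard differentiable) functional with respect to the local uniform topology on $(0,1)$ in which the empirical processes converge. The paper's fix is a localization argument: since $\T_\alpha\leq\alpha<1$ and $\T_\alpha>0$, one can choose a compact $[a,b]\subset(0,1)$ with $\T_\alpha\in(a,b)$ and work with the restricted functional $\T^{\BH_\alpha}_{[a,b]}(F)=\sup\{t\in[a,b]:F(t)\geq t/\alpha\}$, which is Hadamard differentiable at $\Flim$ tangentially to $\mathcal{C}[a,b]$ with the same derivative, and for which the convergence on $D[a,b]$ suffices. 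One must then transfer the conclusion back to the unrestricted functional; the paper does this via the deterministic event inclusion
\begin{equation*}
\set{\abs{\Fm{m}{n}(a)-\Flim(a)}\leq\eta}\subset\set{\T^{\BH_\alpha}(\Fm{m}{n})=\T^{\BH_\alpha}_{[a,b]}(\Fm{m}{n})},\qquad \eta=\tfrac{1}{2}\paren{\Flim(a)-a\alpha^{-1}}>0,
\end{equation*}
whose left-hand event has probability tending to one since $\Fm{m}{n}(a)\to\Flim(a)$ in probability. Without this (or an equivalent) argument, your plan does not yet yield the stated convergences; with it, the rest of your outline goes through as you describe.
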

Proposition~\ref{thr:BH95Process} is proved in Section~\ref{proof:BH95Process}. 
As a sanity check, we see that Proposition~\ref{thr:BH95Process} reduces to the result of \cite{Neu2008} when $n/m$ tends to infinity ($\sigma^2=1$). 

\subsubsection{Proof of Theorem~\ref{thr:BH95}}\label{proof:BH95}

Theorem~\ref{thr:BH95} is obtained from Proposition~\ref{thr:BH95Process} by computing the different asymptotic covariance functions. For this, we use that $(\Z_0,\Z_1,\Z)$ can be written as \begin{align*}
\begin{pmatrix}
\Z_0\\
\Z_1\\
\Z
\end{pmatrix}
=
\begin{pmatrix}
\frac{\sigma}{\sqrt{\pi_0}}\mathbb{U}+\sqrt{1-\sigma^2}\mathbb{W}\\
\frac{\sigma}{\sqrt{1-\pi_0}}\mathbb{V}_{G}+G'\sqrt{1-\sigma^2}\mathbb{W}\\
\sqrt{\pi_0 \sigma^2}\mathbb{U}+\sqrt{\paren{1-\pi_0}\sigma^2}\mathbb{V}_G+\Flim'\sqrt{1-\sigma^2}\mathbb{W}
\end{pmatrix},
\end{align*}
with $\U$, $\V$ and $\W$ being three independent standard Brownian bridges. The result follows from direct computations.

\subsubsection{Proof of Theorem~\ref{thr:BH95Weight}} \label{proof:thr:BH95Weight}

Proving Theorem~\ref{thr:BH95Weight} is analogue to the proof of Theorem~\ref{thr:BH95} above, but starting from weighted processes. For short, the full description of the proof is postponed to Section~\ref{sec:proof:ndweighted}.  Therein, Propositions~\ref{thr:conformalBridgeWeight} and~\ref{thr:BH95ProcessWeight} are the analogues of Propositions~\ref{thr:conformalBridge} and~\ref{thr:BH95Process}, respectively.

\section{Conclusion}

In this paper we obtain the exact asymptotic distribution of $\FCP$ and $\FDP$ for conformal inference methods when both the sizes of the calibration sample and test sample grow simultaneously. Our theory covers both the prediction and novelty detection settings, including a potential distribution shift. 
Our results quantify exactly how the covariance process is affected by the dependence inherent to the conformal  
settings, that use the same calibration sample for all the test examples. First, we prove that the convergence rate $\sqrt{\tau_{n,m}}$ can be largely deteriorated when $n/m$ vanishes to zero (that is, $\sigma=0$). Otherwise, when $n$ and $m$ are of the same order, the convergence rate is the usual one ($m^{1/2}$), but the asymptotic covariance is affected. Nevertheless, when $n/m$ tends to infinity, the convergence is strictly the same as in the usual independent case.
Interestingly, our results can be used to calibrate easily and accurately quantiles for controlling an error amount when performing conformal inference with large $n$ and $m$. 
We also quantify the effects of doing conformal inference while there is a distribution shift between the calibration sample and the test sample. We exhibit how this distribution shift acts on the asymptotic behaviour of the $\FCP$, by changing the mean and the variance, and how the correction with weighted conformal $p$-values impacts the asymptotic variance. 

While our work paves the way for studying asymptotic convergences in conformal inferences, it leaves some open directions. For instance, in case of a distribution shift, the oracle weight function is mostly unknown, and is often estimated \citep{jin2023model}. Finding the exact asymptotic distribution for the processes using estimated weights is a very interesting and challenging problem for future investigations. In a finite sample setting, using the asymptotic expectation and variance as a proxy to design weight and score functions can provide very compelling new ideas and can unlock new training procedures.

\begin{acks}[Acknowledgments]
I am grateful to \'Etienne Roquain for his precious advices and reviews throughout this work. I wish to thank Gilles Blanchard and Sylvain Delattre for their comments, and Magalie Fromont for fruitful discussions. I acknowledge the Emergence project MARS of Sorbonne Universit\'e,  and grant  ANR-23-CE40-0018-01 (BACKUP) of the French National Research Agency ANR.
\end{acks}
\bibliographystyle{imsart-nameyear} 
\bibliography{biblio}       


\begin{appendix}

\section{Standardisation lemma}

In this section, we introduce the standardisation lemma, which will be extensively used in our proofs. 
Let us introduce the notation
\begin{align}
\Ftestcal(t)&=\Ftest\circ \Fcal^{-1}(t) , \:t \in [0,1]\label{eq:Compositioncdf};\\
\Fzerocal(t)&=F_0\circ \Fcal^{-1}(t) , \:t \in [0,1]\label{eq:CompositioncdfF0},
\end{align}    
where by convention $\Fcal^{-1}(0)$ denotes the infimum of the support of the distribution given by $\Fcal$ and $\Ftestcal(1)=\Fzerocal(1)=1$.
The following lemma holds.

\begin{lemma}\label{lemma:UniformCal}
Consider either the prediction setting with Assumption~\ref{as:CP} (with parameters $\Fcal$, $\Ftest$) or the novelty detection setting with  Assumption~\ref{as:ND} (with parameters $\Fcal$, $\Ftest$, $F_0$). If $\Fcal$, $\Ftest$ satisfy Assumption~\ref{as:strictCroissance}, for any weight function $w:\R\cup \{\infty\}\mapsto \R^+$ with $w(+\infty)>0$, the distribution of the $w$-weighted conformal $p$-value family under the parameters $\Fcal$, $\Ftest$ (and $F_0$) is the same as the $w\circ \Fcal^{-1}$-weighted conformal $p$-value family under the parameters $\Fcal=I$, $\Ftest=\Ftestcal$ (and $F_0=\Fzerocal$).
\end{lemma}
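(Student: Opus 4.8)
The plan is to prove Lemma~\ref{lemma:UniformCal} by an explicit change of variables, transporting the calibration scores through $\Fcal$. Under Assumption~\ref{as:strictCroissance}, $\Fcal$ is a continuous increasing bijection from its support $(a,b)$ onto $(0,1)$, so the quantile transform $U_k := \Fcal(S_k)$ and $U'_i := \Fcal(T_i)$ (with the convention $\Fcal(+\infty)=1$) is well defined. First I would record the distributional facts: if $S_k\sim\Fcal$ then $U_k$ is uniform on $(0,1)$ (i.e. has c.d.f. $I$), and if $T_i\sim\Ftest$ then $U'_i$ has c.d.f. $\Ftest\circ\Fcal^{-1}=\Ftestcal$ on $[0,1]$ (and similarly $F_0\circ\Fcal^{-1}=\Fzerocal$ in the novelty-detection case); moreover the independence structure and the i.i.d./mixture structure from Assumption~\ref{as:CP} (resp.~\ref{as:ND}) is preserved by this deterministic monotone map. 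So $(U_k)_k$, $(U'_i)_i$ realize exactly the parameters $\Fcal=I$, $\Ftest=\Ftestcal$ (and $F_0=\Fzerocal$).

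Next I would check that the weighted conformal $p$-values are invariant under this transformation, up to replacing $w$ by $w\circ\Fcal^{-1}$. The key point is that $\Fcal$ is strictly increasing on its support, hence the events $\{S_k\geq T_i\}$ and $\{U_k\geq U'_i\}$ coincide almost surely (ties occur with probability zero since $\Fcal,\Ftest$ are continuous), and similarly $w(S_k)=w(\Fcal^{-1}(U_k))=(w\circ\Fcal^{-1})(U_k)$, while the boundary term $w(+\infty)$ is unchanged and $w(+\infty)>0$ keeps the denominator positive. Plugging these identities into the definition~\eqref{eq:IntroConfpvaluesWeight},
\begin{equation*}
\pw{i}{n}=\frac{w(+\infty)+\sum_{k=1}^{n}w(S_k)\1{S_k\geq T_i}}{w(+\infty)+\sum_{k=1}^{n}w(S_k)}
=\frac{w(+\infty)+\sum_{k=1}^{n}(w\circ\Fcal^{-1})(U_k)\1{U_k\geq U'_i}}{w(+\infty)+\sum_{k=1}^{n}(w\circ\Fcal^{-1})(U_k)},
\end{equation*}
which is exactly the $(w\circ\Fcal^{-1})$-weighted conformal $p$-value built from the samples $(U_k)_k$, $(U'_i)_i$. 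Since this holds simultaneously for all $i\in\range{m}$ with the same realization, the whole $p$-value family is equal almost surely, hence has the same distribution; in the novelty-detection case the partition $\cH_0,\cH_1$ plays no role beyond labeling which $T_i$'s follow which law, so the argument is identical.

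The one point requiring a little care — and the main (minor) obstacle — is the behaviour of $\Fcal^{-1}$ at the endpoints and on the null set where $\Fcal$ is not strictly increasing off its support: I would note that $S_k$ and $T_i$ take values in the support $(a,b)$ almost surely (the distributions $\Pcal$, and $\Ptest$ which is here assumed continuously differentiable hence absolutely continuous; in the novelty case one may additionally need $P_0$, $\Ptest$ to charge only the support of $\Fcal$, which is implicit in the statement via the absolute-continuity setup), so that $\Fcal^{-1}(\Fcal(S_k))=S_k$ a.s., and the convention $\Fcal^{-1}(0)=\inf\supp(\Fcal)$, $\Fcal^{-1}(1)=+\infty$, $\Ftestcal(1)=\Fzerocal(1)=1$ matches the $w(+\infty)$ convention. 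Once these measure-zero issues are dispatched, the proof is the one-line substitution above. I would present it as: (i) define the transform and state the preserved structure; (ii) verify the $p$-value identity; (iii) conclude equality in distribution.
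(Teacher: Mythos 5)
Your proposal is correct and follows essentially the same route as the paper's proof: both apply the monotone transform $\Fcal$ to the calibration and test scores, observe that $S'_k=\Fcal(S_k)$ is uniform while $T'_i=\Fcal(T_i)$ has c.d.f.\ $\Ftestcal$ (or $\Fzerocal$), and note that $\sum_k w(S_k)\1{S_k\geq T_i}=\sum_k (w\circ\Fcal^{-1})(S'_k)\1{S'_k\geq T'_i}$ almost surely, so the weighted $p$-value formula is preserved. Your additional remarks on the endpoint conventions and measure-zero issues are sound but not needed beyond what the paper already states.
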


\begin{proof}[Proof of Lemma~\ref{lemma:UniformCal}]
Since $\Fcal$ is continuous increasing on its support, we can write almost surely
\begin{align*}
\sum_{k\in\range{n}} w(S_k)\1{ {S}_k\geq T_i}&=\sum_{k\in\range{n}} (w\circ \Fcal^{-1})(\Fcal(S_k)) \1{ \Fcal({S}_k)\geq \Fcal(T_i)}\\
&=\sum_{k\in\range{n}} (w\circ \Fcal^{-1})(S'_k) \1{ {S}'_k\geq T'_i},
\end{align*}
with $S'_k=\Fcal(S_k)$ which are i.i.d. uniform and $T'_i=\Fcal(T_i)$ which are i.i.d. $\sim \Ftestcal$ (or either $\sim \Fzerocal$ under the null or $\sim \Ftestcal$ under the alternative,  in the novelty detection setting).
\end{proof}

\section{Proofs of auxiliary results}\label{sec:ProofCLT}

In this section, we prove Proposition~\ref{prop:BBCouple}, Propositions~\ref{thr:conformalBridge} and~\ref{thr:BH95Process}. Proofs for the weighted case are given in Section~\ref{sec:proofsweight}.

\subsection{Proof of Proposition \ref{prop:BBCouple}}\label{proof:BBCouple}

First,  the Donsker theorem (Theorem~\ref{lemma:Donsker}) provides
\begin{equation*}
\sqrt{m}\paren{\Fmtest-\Ftest}\cvloi \U\circ\Ftest \text{ on $D(\R)$,}
\end{equation*}
with $\U$ being a standard Brownian bridge.
Moreover, by the Glivenko-Cantelli theorem, we have that $(\Gncal{(n)})_n$ converges in probability on $\ell^{\infty}(\R)$ to $\Fcal$. Since by Assumption~\ref{as:strictCroissance} the inverse map is continuous at $\Fcal$ we obtain that $({({\Gncal{(n)}})^{-1}})_n$ converges in probability on $D(0,1)$ to $\Fcal^{-1}$.

Second, applying again the Donsker theorem (Theorem~\ref{lemma:Donsker}), we have 
\begin{equation*}
\sqrt{n}\paren{\Gncal{(n)}-\Fcal}\cvloi \V\circ\Fcal \text{ on $D(\R)$,}
\end{equation*}
with $\V$ a standard Brownian bridge independent of $\U$. Now, by using the fonctional delta method with the inverse map (see Lemma~\ref{lemma:hadamardInverse})  by Assumption~\ref{as:strictCroissance}, we obtain,
\begin{equation}\label{eq:DonskerQuantileCal}
\sqrt{n}\paren{{({\Gncal{(n)}})^{-1}}-\Fcal^{-1}}\cvloi ({\Fcal^{-1}})'\V \text{ on $D(0,1)$,}
\end{equation}
where $({\Fcal^{-1}})'$ denotes the derivative of ${\Fcal^{-1}}$.
Again, we use the Hadamard differentiability of the map $\varphi\mapsto \Ftest\circ\varphi$ which is true by Assumption~\ref{as:strictCroissance} (see Lemma~\ref{lemma:hadamardComposition}) to obtain
\begin{equation*}
\sqrt{n}\paren{\Ftest\circ{({\Gncal{(n)}})^{-1}}-\Ftest\circ\Fcal^{-1}}\cvloi \Ftest'\circ\Fcal^{-1}({\Fcal^{-1}})'\V =\paren{\Ftest\circ\Fcal^{-1}}'\V\text{ on $D(0,1)$.}
\end{equation*}

Since for all $t\in(0,1)$, $G'(t)=\paren{\Ftest\circ\Fcal^{-1}}'(1-t)$ and since $\V$ is a standard Brownian bridge if and only if $(\V_t)_t=(\V_{1-t})_t$ is a standard Brownian bridge we obtain the second term in \eqref{eq:jointcvloi} .

Finally, since $\dcal$ and $\dtest$ are independent, we obtain the joint convergence:
\begin{equation*}
\begin{pmatrix}
\sqrt{m}\brac{\Fmtest-\Ftest}\\
\sqrt{n}\brac{\Ftest\circ{({\Gncal{(n)}})^{-1}}(1-I)-\Ftest\circ\Fcal^{-1}(1-I)}
\end{pmatrix}
\overset{\Loi}{\rightarrow}
\begin{pmatrix}
\mathbb{U}\circ \Ftest\\
G'\mathbb{V}
\end{pmatrix}
\text{ on $D(\R)\times D(0,1)$},
\end{equation*}
with $\U$ and $\V$ two independent standard Brownian bridges. By Slutsky's Lemma, we obtain the following joint convergence
\begin{align*}
\begin{pmatrix}
\sqrt{m}\brac{\Fmtest-\Ftest}\\
\sqrt{n}\brac{\Ftest\circ{({\Gncal{(n)}})^{-1}}(1-I)-\Ftest\circ\Fcal^{-1}(1-I)}\\
{({\Gncal{(n)}})^{-1}}(1-I)
\end{pmatrix}
\overset{\Loi}{\rightarrow}
\begin{pmatrix}
\mathbb{U}\circ \Ftest\\
G'\mathbb{V}\\
\Fcal^{-1}(1-I)
\end{pmatrix}\\
\text{ on $D(\R)\times D(0,1)\times \ell^{\infty}(0,1)$}.
\end{align*}
Then, using Lemma~\ref{cor:JointComposition}, we finally obtain
\begin{align*}
\begin{pmatrix}
\sqrt{m}\brac{\Fmtest\circ{({\Gncal{(n)}})^{-1}}(1-I)-\Ftest\circ{({\Gncal{(n)}})^{-1}}(1-I)}\\
\sqrt{n}\brac{\Ftest\circ{({\Gncal{(n)}})^{-1}}(1-I)-\Ftest\circ\Fcal^{-1}(1-I)}
\end{pmatrix}
\overset{\Loi}{\rightarrow}
\begin{pmatrix}
\mathbb{U}\paren{ \Ftest\circ\Fcal^{-1}(1-I)}\\
G'\mathbb{V}
\end{pmatrix}\\
\text{ on $\brac{D(0,1)}^2$}.
\end{align*}
Since $\U\paren{ \Ftest\circ\Fcal^{-1}(1-I)}$ has the same distribution as $\U(G)$, we obtain \eqref{eq:jointcvloi}.

\subsection{Proof of Proposition~\ref{thr:conformalBridge}}\label{proof:conformalBridge}

 By using Lemma~\ref{lemma:UniformCal}, one can assume without  loss of generality that $P_0=\Pcal=\mathcal{U}(0,1)$ and $\Ptest$ has for c.d.f. $\Ftestcal$. By applying the Donsker theorem (Theorem~\ref{lemma:Donsker}) with the independent families $(S_k,k\geq 1)$, $(T_i,i\in \cH_0)$ and $(T_i,i\in \cH_1)$ and following the same reasoning as in the proof of Proposition~\ref{prop:BBCouple}, there exist
 $\U$, $\V$ and $\W$ three independent standard Brownian bridges such that
\begin{equation*}
\begin{pmatrix}
\sqrt{m_0(m)}\brac{\wh{F}_{{\tiny \mbox{$m$,$0$,test}}}-I}\\
\sqrt{m_1(m)}\brac{\wh{F}_{{\tiny \mbox{$m$,$1$,test}}}-\Ftestcal}\\
\sqrt{n}\brac{{({\Gncal{(n)}})^{-1}}(1-I)-(1-I)}\\
\sqrt{n}\brac{\Ftestcal\circ{({\Gncal{(n)}})^{-1}}(1-I)-\Ftestcal(1-I)}\\
{({\Gncal{(n)}})^{-1}}(1-I)
\end{pmatrix}
\overset{\Loi}{\rightarrow}
\begin{pmatrix}
\mathbb{U}\\
\mathbb{V}(1-\Ftestcal)\\
\W\\
G'\mathbb{W}\\
1-I
\end{pmatrix}
\text{ on $\brac{D(0,1)}^4 \times\ell^\infty(0,1)$},
\end{equation*}
where we denoted $\wh{F}_{{\tiny \mbox{$m$,$r$,test}}}$ the e.c.d.f. of $\set{T_i,i\in\range{m}\cap\cH_r} $ for $r\in\set{0,1}$. Now, by using Lemma~\ref{cor:JointComposition} (or more precisely, an obvious extension of it for $5$ joint processes), we obtain

\begin{equation*}
\begin{pmatrix}
\sqrt{m_0(m)}\brac{\wh{F}_{{\tiny \mbox{$m$,$0$,test}}}\circ{({\Gncal{(n)}})^{-1}}(1-I)-{({\Gncal{(n)}})^{-1}}(1-I)}\\
\sqrt{m_1(m)}\brac{\wh{F}_{{\tiny \mbox{$m$,$1$,test}}}\circ{({\Gncal{(n)}})^{-1}}(1-I)-\Ftestcal\circ{({\Gncal{(n)}})^{-1}}(1-I)}\\
\sqrt{n}\brac{{({\Gncal{(n)}})^{-1}}(1-I)-(1-I)}\\
\sqrt{n}\brac{\Ftestcal\circ{({\Gncal{(n)}})^{-1}}(1-I)-\Ftestcal(1-I)}
\end{pmatrix}
\overset{\Loi}{\rightarrow}
\begin{pmatrix}
\mathbb{U}\\
\mathbb{V}\circ G\\
\W\\
G'\mathbb{W}
\end{pmatrix}
\text{ on $\brac{D(0,1)}^4$}.
\end{equation*}

Now, $\F0$~\eqref{eq:ecdfNovzero} and $\F1$~\eqref{eq:ecdfNovtest}  satisfy the following decomposition (obtained similarly to~\eqref{eq:ProcessDecomposition}):
\begin{align*}
\sqrt{\tau_{n,m}}\brac{\F{0}-I}=&-{\sqrt{\frac{\tau_{n,m}}{m}\times\frac{m}{m_0}}}\sqrt{m_0}\brac{\wh{F}_{{\tiny \mbox{$m$,$0$,test}}}\circ{({\Gncal{(n)}})^{-1}}(1-I)-{({\Gncal{(n)}})^{-1}}(1-I)}\nonumber\\ 
&-\sqrt{\frac{\tau_{n,m}}{n}}\sqrt{n}\brac{{({\Gncal{(n)}})^{-1}}(1-I)-(1-I))},\\
\sqrt{\tau_{n,m}}\brac{\F{1}-G}=&-{\sqrt{\frac{\tau_{n,m}}{m}\times\frac{m}{m_1}}}\sqrt{m_1}\brac{\wh{F}_{{\tiny \mbox{$m$,$1$,test}}}\circ{({\Gncal{(n)}})^{-1}}(1-I)-\Ftestcal\circ{({\Gncal{(n)}})^{-1}}(1-I)}\nonumber\\ 
&-\sqrt{\frac{\tau_{n,m}}{n}}\sqrt{n}\brac{\Ftestcal\circ{({\Gncal{(n)}})^{-1}}(1-I)-\Ftestcal(1-I)}.
\end{align*}
We conclude by using $m_0/m\rightarrow \pi_0\in(0,1)$, $m_1/m\rightarrow 1-\pi_0\in(0,1)$, $n/(n+m)\rightarrow\sigma^2\in[0,1]$ and Slutsky's lemma.

\subsection{Proof of Proposition~\ref{thr:BH95Process}}\label{proof:BH95Process}

By Assumption~\ref{as:concavity}, and since $\alpha>[\Flim'(0^+)]^{-1}$, the asymptotic threshold $\T_\alpha=\T^{\BH_\alpha}(\Flim)$ is well defined and belongs to $(0,1)$. 
Now observe that there exists a compact interval $[a,b]\subset(0,1)$ such that
\begin{equation}\label{eq:focalInterval}
\T_\alpha=\sup\set{t\in[a,b],\ \Flim(t)\geq \frac{t}{\alpha}},
\end{equation}
To see this, note that since $\Flim\leq 1$ we have that $\T_\alpha \leq \alpha$, hence $\T_\alpha$ is smaller than any $b\in(\alpha,1)$. Now, since $\T_\alpha>0$, any $a\in (0,\T_\alpha)$ leads to $\T_\alpha\in (a,b)$, which leads to \eqref{eq:focalInterval}.

Now consider the functional
\begin{equation}\label{eq:functionalBHab}
\T^{\BH_\alpha}_{[a,b]}(F)\coloneqq\sup\set{t\in[a,b], F(t)\geq \frac{t}{\alpha}},
\end{equation}
which is similar to \eqref{eq:functionalBH} but with a restricted range on $t$. Equation \eqref{eq:focalInterval} hence reads as $\T_\alpha= \T^{\BH_\alpha}_{[a,b]}(\Flim)$.
In addition, we have 
\begin{equation}\label{eq:focalIntervalProba}
\set{\abs{\Fm{m}{n}(a)-\Flim(a)}\leq\eta}\subset\set{\T^{\BH_\alpha}(\Fm{m}{n})=\T^{\BH_\alpha}_{[a,b]}(\Fm{m}{n})},
\end{equation}
where $\eta=(\Flim(a)-a\alpha^{-1})/2$, which is positive by the choice of $a$. 
Indeed, if  $|{\Fm{m}{n}(a)-\Flim(a)}|\leq\eta$, we have 
$
\Fm{m}{n}(a)-a\alpha^{-1}=\Fm{m}{n}(a)-\Flim(a)+\Flim(a)-a\alpha^{-1}\geq \eta$, 
which implies that $\{|{\Fm{m}{n}(a)-\Flim(a)}|\leq\eta\}\subset\{\T^{\BH_\alpha}(\Fm{m}{n})> a\}$. This proves \eqref{eq:focalIntervalProba}.

Now, $\T^{\BH_\alpha}_{[a,b]}$ is Hadamard differentiable at $\Flim$, tangentially to $\mathcal{C}[a,b]$ with a derivative coinciding with the one of $\T^{\BH_\alpha}$, that is, 
$${(\dot \T^{\BH_\alpha}_{[a,b]})}_{\Flim} ={(\dot \T^{\BH_\alpha})}_{\Flim}.$$
By the convergence of Proposition~\ref{thr:conformalBridge}, which holds on $D[a,b]$, we can apply the functional delta method (see Lemma~\ref{lemma:hadamardNeuvial} for the exact expression of derivatives) to  obtain 
\begin{align*}
\sqrt{\tau_{n,m}}\paren{\T^{\BH_\alpha}_{[a,b]}(\Fm{m}{n})-\T_\alpha}&\cvloi
\frac{1}{\frac{1}{\alpha}-\Flim'(\T_\alpha)}\Z({\T_\alpha});\\
\sqrt{\tau_{n,m}}\paren{\FDPm\paren{\T^{\BH_\alpha}_{[a,b]}(\Fm{m}{n})}-\pi_0\alpha}&\cvloi \frac{\pi_0}{\Flim(\T_\alpha)}\Z_0({\T_\alpha});\\
\sqrt{\tau_{n,m}}\paren{\TDPm\paren{\T^{\BH_\alpha}_{[a,b]}(\Fm{m}{n})}-G(\T_\alpha)}&\cvloi \frac{G'(\T_\alpha)}{\frac{1}{\alpha}-\Flim'(\T_\alpha)}\Z({\T_\alpha})+\Z_1(\T_\alpha),
\end{align*}
where 
$\Z_0$, $\Z_1$ and $\Z$ are the three processes defined in Proposition~\ref{thr:conformalBridge}.
Now, the same convergences hold for $\T^{\BH_\alpha}(\Fm{m}{n})$ by using \eqref{eq:focalIntervalProba} because $\Fm{m}{n}(a)$ converges in probability to $\Flim(a)$.  This concludes the proof.

\section{Proofs for the weighted case}\label{sec:proofsweight}

In this section, we state and prove Lemma~\ref{lemma:DonskerWeight}, Propositions~\ref{prop:BBCoupleweight},~\ref{thr:conformalBridgeWeight}~and~\ref{thr:BH95ProcessWeight}.

\subsection{Weighted versions of the Glivenko-Cantelli and Donsker theorem.}
Recall that 
$\Gncal{w,(n)}(t)$ is given by \eqref{eq:ecdfcalWeight}. The next result applies both in the prediction and novelty detection settings and will be used to prove Propositions~\ref{prop:BBCoupleweight},~\ref{thr:conformalBridgeWeight}~and~\ref{thr:BH95ProcessWeight}..

\begin{lemma}\label{lemma:DonskerWeight}
Assume $\Pcal=\mathcal{U}(0,1)$.
 Let $w$ a weight function satisfying Assumption~\ref{as:weight}. Then it holds
 $$
 {({\Gncal{w,(n)}})^{-1}}\overset{\P}{\rightarrow} ({\Wcal})^{-1} \text{ on $\ell^{\infty}(0,1)$,}
 $$
and
\begin{align*}
\sqrt{n}\paren{{({\Gncal{w,(n)}})^{-1}}-({\Wcal})^{-1}}\overset{\Loi}{\rightarrow}(({\Wcal})^{-1})' \rw \paren{\V \circ{{{V}^w_{{\tiny\mbox{cal}}}}\circ ({\Wcal})^{-1}}+\brac{I-{{V}^w_{{\tiny\mbox{cal}}}}\circ ({\Wcal})^{-1}}N}\\\text{ on $D(0,1)$},
\end{align*}
where $\V$ is a standard Brownian bridge and $N$ an independent standard Gaussian random variable. 
\end{lemma}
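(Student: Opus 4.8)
The plan is to view the weighted calibration c.d.f.\ $\Gncal{w,(n)}$ as a smooth functional of the empirical process of $S_1,\dots,S_n$ indexed by the class $\mathcal F=\{w_t:x\mapsto w(x)\1{x\le t};\ t\in\R\}$, to transfer the Glivenko-Cantelli and Donsker properties of $\mathcal F$ to $\Gncal{w,(n)}$, and then to pass to the generalised inverse through the functional delta method (Lemma~\ref{lemma:hadamardInverse}); the final step is to recognise the resulting Gaussian limit in the stated form by matching covariances.

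I would write $\mu_1=\int_\R w\,\dd\Pcal>0$ and $\mu_2=\int_\R w^2\,\dd\Pcal$ (so that $\rw=\mu_2^{1/2}/\mu_1$), and set $A_n(t)=\frac1n\sum_{k\le n}w(S_k)\1{S_k\le t}$ and $N_n=\frac1n\sum_{k\le n}w(S_k)=\lim_{t\to\infty}A_n(t)$, so that $\Gncal{w,(n)}(t)=A_n(t)/(N_n+w(+\infty)/n)$. Under Assumption~\ref{as:weight} the class $\mathcal F$ has bounded envelope $|w|$ and is $\Pcal$-Glivenko-Cantelli and $\Pcal$-Donsker, being a fixed bounded multiple of the VC class $\{x\mapsto\1{x\le t}:t\in\R\}$ --- the fact already invoked in the proof of Theorem~\ref{thr:cvAlterWeight}. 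The Glivenko-Cantelli property gives $\sup_t|A_n(t)-\mu_1\Wcal(t)|\to0$ a.s.\ and $N_n\to\mu_1$ a.s.; since $w(+\infty)/n\to0$ and $\mu_1>0$, this yields $\sup_t|\Gncal{w,(n)}(t)-\Wcal(t)|\to0$ a.s.\ in $\ell^\infty(\R)$, and composing with the generalised inverse --- continuous at $\Wcal$ because $\Wcal$ is continuous and strictly increasing on its support (Assumption~\ref{as:weight}) --- gives ${({\Gncal{w,(n)}})^{-1}}\overset{\P}{\rightarrow}({\Wcal})^{-1}$ on $\ell^\infty(0,1)$, which is the first assertion.

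For the central limit assertion, the Donsker property gives $\sqrt n(A_n-\mu_1\Wcal)\overset{\Loi}{\rightarrow}\mathbb G$ as processes on $\R$ with a.s.\ continuous paths, where $\mathbb G$ is mean-zero Gaussian with $\Cov(\mathbb G(s),\mathbb G(t))=\mu_2{V}^w_{{\tiny\mbox{cal}}}(s\wedge t)-\mu_1^2\Wcal(s)\Wcal(t)$ and $\mathbb G(+\infty)=\lim_n\sqrt n(N_n-\mu_1)$. I would then expand the ratio $A_n/(N_n+w(+\infty)/n)$, using $N_n\to\mu_1>0$ a.s., $w(+\infty)/\sqrt n\to0$ and Slutsky's lemma, to obtain
\begin{equation*}
\sqrt n\paren{\Gncal{w,(n)}-\Wcal}\overset{\Loi}{\rightarrow}\mathbb H:=\mu_1^{-1}\paren{\mathbb G-\Wcal\,\mathbb G(+\infty)}\qquad\text{on }D(\R).
\end{equation*}
Since $\Wcal$ is continuously differentiable with $\Wcal'>0$ on its support, the inverse map is Hadamard differentiable at $\Wcal$ with derivative $h\mapsto-\big((\Wcal)^{-1}\big)'\cdot\big(h\circ(\Wcal)^{-1}\big)$ (Lemma~\ref{lemma:hadamardInverse}), so the functional delta method gives
\begin{equation*}
\sqrt n\paren{{({\Gncal{w,(n)}})^{-1}}-({\Wcal})^{-1}}\overset{\Loi}{\rightarrow}\Z^\star:=-\big((\Wcal)^{-1}\big)'\cdot\big(\mathbb H\circ(\Wcal)^{-1}\big)\qquad\text{on }D(0,1).
\end{equation*}

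It then remains to identify $\Z^\star$. I would put $a(u)={V}^w_{{\tiny\mbox{cal}}}\circ(\Wcal)^{-1}(u)$, a continuous increasing bijection of $(0,1)$, and check --- using $\Wcal\circ(\Wcal)^{-1}(u)=u$, ${V}^w_{{\tiny\mbox{cal}}}(+\infty)=\Wcal(+\infty)=1$ and the covariance of $\mathbb G$ --- that, for $u\le v$,
\begin{equation*}
\Cov\big(\mathbb H((\Wcal)^{-1}(u)),\ \mathbb H((\Wcal)^{-1}(v))\big)=(\rw)^2\brac{a(u)\big(1-a(v)\big)+\big(u-a(u)\big)\big(v-a(v)\big)}.
\end{equation*}
This is exactly the covariance of $\rw\big(\V\circ a+[I-a]\,N\big)$ with $\V$ a standard Brownian bridge and $N$ an independent standard Gaussian, since $\Cov(\V(a(u)),\V(a(v)))=a(u)(1-a(v))$ as $a(u)\le a(v)$, $\V$ is independent of $N$, and $\Cov((u-a(u))N,(v-a(v))N)=(u-a(u))(v-a(v))$. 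As both are mean-zero Gaussian processes, they agree in law, and multiplying by the deterministic factor $\big((\Wcal)^{-1}\big)'$ (and flipping the signs of $\V$ and $N$ to match the notation) identifies $\Z^\star$ with the stated limit. The step I expect to need the most care is this last covariance bookkeeping; the ratio expansion and the application of the delta method are routine, and the Donsker/Glivenko-Cantelli property of $\mathcal F$ --- where the boundedness of $w$ in Assumption~\ref{as:weight} is used --- is inherited from the proof of Theorem~\ref{thr:cvAlterWeight}.
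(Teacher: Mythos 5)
Your proposal is correct and follows essentially the same route as the paper: Glivenko--Cantelli/Donsker for the class $\{x\mapsto w(x)\1{x\le t}\}$, normalisation of the weighted e.c.d.f.\ (which the paper handles via the Hadamard-differentiable map $F\mapsto F/F(1)$ of Lemma~\ref{lemma:HadamardWeight}, amounting to the same ratio expansion you propose), the delta method with the inverse map, and a final covariance identification. The only point to make explicit is that the limit $\mathbb G$ has an a.s.\ continuous version (the paper invokes Kolmogorov--\v Centsov) so that the delta method applies tangentially to continuous paths; your covariance bookkeeping is otherwise exactly the paper's computation.
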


Note that in the case $w\equiv 1$ (unweighted case), we recover the convergence presented in Corollary~\ref{lemma:quantilebridge}, because ${\Wcal}={{V}^w_{{\tiny\mbox{cal}}}}=I$.

\begin{proof}[Proof of Lemma~\ref{lemma:DonskerWeight}]
Denote for $n\geq 1$ and $t\in[0,1]$, $$K^{(n)}(t)\coloneqq\frac{1}{n+1}\sum_{k=1}^n w(S_k)\1{S_k\leq t}.$$
Since $w$ is uniformly bounded, the family $\mathcal{F}=\set{w_t:x\in\R\mapsto w(x)\1{x\leq t}; t\in\R}$  is $\mathcal{U}(0,1)$-Glivenko-Cantelli, and $(K^{(n)})_n$ converges uniformly on $[0,1]$ in probability to the function $K$ given by $K(t)\coloneqq \int_{0}^{t}w(x)\dd x$ \citep{SW1986}. Then, by continuity at $K$ of the map of Lemma~\ref{lemma:HadamardWeight}, we get that $(K^{(n)}/K^{(n)}(1))_n$ converges uniformly (in probability) to $\Wcal$. 
Since $\|\Gncal{w,(n)}-K^{(n)}/K^{(n)}(1)\|_\infty\leq w(+\infty)/((n+1) K^{(n)}(1)+w(+\infty))$ tends uniformly to $0$ a.s., $(\Gncal{w,(n)})_n$ converges uniformly (in probability) to $\Wcal$. By continuity of the inverse map at $\Wcal$ (see Lemma~\ref{lemma:hadamardInverse}), we obtain that $({({\Gncal{w,(n)}})^{-1}})_n$ converges in probability to  $({\Wcal})^{-1}$  on $\ell^{\infty}(0,1)$. This proves the first statement.

Next, we turn to prove the second statement. 
Since $w$ is uniformly bounded, the family $\mathcal{F}=\set{w_t:x\in\R\mapsto w(x)\1{x\leq t}; t\in\R}$ is $\mathcal{U}(0,1)$-Donsker \citep{SW1986}.
hence there exist $\K=(\K(t))_{t\in[0,1]}$ a Gaussian process such that
$$\sqrt{n}\paren{K^{(n)}-K}\overset{\Loi}{\rightarrow} \K\ \text{in $\ell^{\infty}[0,1]$},$$
 where the distribution of $\K$ is given by $\E(\K)=0$ and for $(s,t)\in[0,1]^2$,
\begin{align*}
\Cov\paren{\K(s),\K(t)}&=\int_{0}^{s\wedge t}w^2(x)\dd x-\int_{0}^{s}w(x)\dd x\int_{0}^{t}w(x)\dd x;\\
&=K(1)^2\paren{\rw^2{{{V}^w_{{\tiny\mbox{cal}}}}}(s\wedge t)-{\Wcal}(t){\Wcal}(s)},
\end{align*}
where we used $K(1)^2\rw^2{V}^w_{{\tiny\mbox{cal}}}(t)=\int_0^tw^2(x)\dd x$ and ${\Wcal}=K/K(1)$ by the definition of ${V}^w_{{\tiny\mbox{cal}}}$~\eqref{eq:variancecdf}, $\rw$~\eqref{eq:varianceratio} and $\Wcal$~\eqref{eq:cdfcalWeight}. We easily check that 
the condition of the Kolmogorov-\v Centsov theorem is satisfied, so that $\K$ has a continuous version. 
Now, by applying the functional delta method with the map of Lemma~\ref{lemma:HadamardWeight}, we obtain
\begin{equation*}
\sqrt{n}\paren{\paren{\frac{K^{(n)}}{K^{(n)}(1)}}-{\Wcal}}\overset{\Loi}{\rightarrow} \frac{\K}{K(1)}-\frac{\K(1)}{K(1)}{\Wcal},\ \text{in $\mathcal{C}[0,1]$}.
\end{equation*}
 Since $\sqrt{n}\|\Gncal{w,(n)}-K^{(n)}/K^{(n)}(1)\|_\infty\leq\sqrt{n} w(+\infty)/((n+1)K^{(n)}(1)+w(+\infty))\rightarrow 0$ a.s., then by Slustky's lemma, we obtain
\begin{equation*}
\sqrt{n}\paren{\Gncal{w,(n)}-{\Wcal}}\overset{\Loi}{\rightarrow} \frac{\K}{K(1)}-\frac{\K(1)}{K(1)}{\Wcal},\ \text{in $\mathcal{C}[0,1]$}.
\end{equation*}
And by applying the functional delta method with the inverse map at ${\Wcal}$ (see Lemma~\ref{lemma:hadamardInverse}) we obtain, 

\begin{equation*}
\sqrt{n}\paren{\Gncal{w,(n)-1}-({\Wcal})^{-1}}\overset{\Loi}{\rightarrow} (({\Wcal})^{-1})'\paren{-\frac{\K\circ ({\Wcal})^{-1}}{K(1)}+\frac{\K(1)}{K(1)}I},\ \text{in $D(0,1)$}.
\end{equation*}

To conclude, we identify the covariance of the Gaussian limiting process. For,  $1>t\geq s>0$, 
\begin{align*}
&\Cov\paren{-\K(({\Wcal})^{-1}(t))/K(1)+\K(1)/K(1)t,-\K(({\Wcal})^{-1}(s))/K(1)+\K(1)/K(1)s}\\
&=\rw^2{{{V}^w_{{\tiny\mbox{cal}}}}}(({\Wcal})^{-1}(s))-{\Wcal}(({\Wcal})^{-1}(t)){\Wcal}(({\Wcal})^{-1}(s))\\
&\ -s\rw^2{{{V}^w_{{\tiny\mbox{cal}}}}}(({\Wcal})^{-1}(t))+s{\Wcal}(({\Wcal})^{-1}(t))\\
&\ -t\rw^2{{{V}^w_{{\tiny\mbox{cal}}}}}(({\Wcal})^{-1}(s))+t{\Wcal}(({\Wcal})^{-1}(s))\\
&\ +ts\rw^2-ts\\
&=\rw^2{{{V}^w_{{\tiny\mbox{cal}}}}}(({\Wcal})^{-1}(s))-ts-s\rw^2{{{V}^w_{{\tiny\mbox{cal}}}}}(({\Wcal})^{-1}(t))+st \\
&\ -t\rw^2{{{V}^w_{{\tiny\mbox{cal}}}}}(({\Wcal})^{-1}(s))+st+ts\rw^2-ts\\
&=\rw^2\paren{{{V}^w_{{\tiny\mbox{cal}}}}(({\Wcal})^{-1}(s))-s{{V}^w_{{\tiny\mbox{cal}}}}(({\Wcal})^{-1}(t))-t{{V}^w_{{\tiny\mbox{cal}}}}(({\Wcal})^{-1}(s))+ts}.
\end{align*}
Therefore, the last display is equal to
\begin{align*}
&\rw^2\Big({{V}^w_{{\tiny\mbox{cal}}}}(({\Wcal})^{-1}(s))-{{V}^w_{{\tiny\mbox{cal}}}}(({\Wcal})^{-1}(s)){{V}^w_{{\tiny\mbox{cal}}}}(({\Wcal})^{-1}(t))\\
&+\brac{t-{{V}^w_{{\tiny\mbox{cal}}}}(({\Wcal})^{-1}(t))}\brac{s-{{V}^w_{{\tiny\mbox{cal}}}}(({\Wcal})^{-1}(t))}\Big)\\
=&\rw^2\paren{{{V}^w_{{\tiny\mbox{cal}}}}(({\Wcal})^{-1}(s))\brac{1-{{V}^w_{{\tiny\mbox{cal}}}}(({\Wcal})^{-1}(t))}+\brac{t-{{V}^w_{{\tiny\mbox{cal}}}}(({\Wcal})^{-1}(t))}\brac{s-{{V}^w_{{\tiny\mbox{cal}}}}(({\Wcal})^{-1}(t))}}.
\end{align*}
The first term coincides with the covariance term of $\V \circ{{V}^w_{{\tiny\mbox{cal}}}}\circ  ({\Wcal})^{-1}$ for a standard Brownian bridge $\V$, 
while the second term is the covariance of the process $\paren{\brac{t-{{V}^w_{{\tiny\mbox{cal}}}}(({\Wcal})^{-1}(t))}N}_{t\in(0,1)}$ with $N\sim\Nor(0,1)$. This concludes the proof for the second statement.

\end{proof}

\subsection{Prediction setting}

The following result is the weighted version of Proposition~\ref{prop:BBCouple}.

\begin{proposition}\label{prop:BBCoupleweight}
In the prediction setting with Assumptions~\ref{as:CP}, \ref{as:strictCroissance} and \ref{as:weight} and assuming that $n/(n+m)$ tends to $\sigma^2\in[0,1]$, we have, 
\begin{align*}
\begin{pmatrix}
\sqrt{m}\brac{\Fmtest\circ{({\Gncal{w,(n)}})^{-1}}(1-I)-\Ftest\circ{({\Gncal{w,(n)}})^{-1}}(1-I)}\\
\sqrt{n}\brac{\Ftest\circ{({\Gncal{w,(n)}})^{-1}}(1-I)-\Ftest\circ {\paren{\Wcal}^{-1}}(1-I)}
\end{pmatrix}
\overset{\Loi}{\rightarrow}
\begin{pmatrix}
\mathbb{U}\paren{G^w}\\
\rw(G^w)'\brac{\mathbb{V}\circ{I^w}+\brac{I-I^w}N}
\end{pmatrix}\\ \text{ on $\brac{D(0,1)}^2$},
\end{align*}
where $\U$ and $\V$ are two independent standard  Brownian bridges and $N$ is an independent standard Gaussian random variable.
\end{proposition}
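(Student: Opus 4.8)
The plan is to follow the proof of Proposition~\ref{prop:BBCouple} line by line, making exactly two substitutions: the unweighted calibration e.c.d.f.\ $\Gncal{(n)}$ is replaced by its weighted version $\Gncal{w,(n)}$ of \eqref{eq:ecdfcalWeight}, and the ``plain Donsker theorem $+$ functional delta method for the inverse map'' argument used there for the calibration term is replaced by the weighted statement of Lemma~\ref{lemma:DonskerWeight}. First I would invoke the standardisation Lemma~\ref{lemma:UniformCal} to assume, without loss of generality, that $\Pcal=\mathcal{U}(0,1)$; one checks that $\Wcal$, ${V}^w_{{\tiny\mbox{cal}}}$, $\rw$, $G^w$ and $I^w$ are unchanged by this reduction (with $\Ftest$ now read as $\Ftest\circ\Fcal^{-1}$, which is still continuously differentiable by Assumption~\ref{as:strictCroissance}). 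In this reduced setting Lemma~\ref{lemma:DonskerWeight}, applied to the weight $w\circ\Fcal^{-1}$, simultaneously gives the uniform consistency $({\Gncal{w,(n)}})^{-1}\overset{\P}{\rightarrow}(\Wcal)^{-1}$ on $\ell^{\infty}(0,1)$ and the weak limit, on $D(0,1)$, of $\sqrt{n}(({\Gncal{w,(n)}})^{-1}-(\Wcal)^{-1})$.

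For the first (test) coordinate I would combine the Donsker theorem for the test sample, $\sqrt{m}(\Fmtest-\Ftest)\overset{\Loi}{\rightarrow}\U\circ\Ftest$ on $D(\R)$ with $\U$ a standard Brownian bridge, with the consistency $({\Gncal{w,(n)}})^{-1}\overset{\P}{\rightarrow}(\Wcal)^{-1}$ via Slutsky's lemma, and then apply the random-change-of-time lemma (Lemma~\ref{cor:JointComposition}) to get that $\sqrt{m}[\Fmtest\circ({\Gncal{w,(n)}})^{-1}(1-I)-\Ftest\circ({\Gncal{w,(n)}})^{-1}(1-I)]$ converges in distribution to $\U(\Ftest\circ(\Wcal)^{-1}(1-I))$; since $\Ftest\circ(\Wcal)^{-1}(1-t)=1-G^w(t)$ by \eqref{eq:altcdfWeight}, the limit has the law of $\U(G^w)$.

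For the second (calibration) coordinate I would push the weak limit of $\sqrt{n}(({\Gncal{w,(n)}})^{-1}-(\Wcal)^{-1})$ from Lemma~\ref{lemma:DonskerWeight} through the map $\varphi\mapsto\Ftest\circ\varphi$, which is Hadamard differentiable at $(\Wcal)^{-1}$ under Assumption~\ref{as:strictCroissance} (Lemma~\ref{lemma:hadamardComposition}). This yields the weak convergence of $\sqrt{n}[\Ftest\circ({\Gncal{w,(n)}})^{-1}-\Ftest\circ(\Wcal)^{-1}]$ with limit $(\Ftest\circ(\Wcal)^{-1})'\,\rw\,(\V({V}^w_{{\tiny\mbox{cal}}}\circ(\Wcal)^{-1})+[I-{V}^w_{{\tiny\mbox{cal}}}\circ(\Wcal)^{-1}]N)$, $\V$ a standard Brownian bridge and $N$ an independent standard Gaussian. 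Evaluating at $1-I$ and using $(G^w)'(t)=(\Ftest\circ(\Wcal)^{-1})'(1-t)$, $I^w(t)=1-{V}^w_{{\tiny\mbox{cal}}}\circ(\Wcal)^{-1}(1-t)$, together with the facts that $t\mapsto\V(1-t)$ is again a standard Brownian bridge and $-N\overset{d}{=}N$, turns this limit into exactly $\rw(G^w)'(\V(I^w)+[I-I^w]N)$, the announced second coordinate.

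Finally, because $\dcal$ and $\dtest$ are independent (Assumption~\ref{as:CP}) the two coordinate limits are independent, so it remains to establish the joint convergence, in $D(\R)\times D(0,1)\times\ell^{\infty}(0,1)$, of the triple $(\sqrt{m}(\Fmtest-\Ftest),\ \sqrt{n}[\Ftest\circ({\Gncal{w,(n)}})^{-1}(1-I)-\Ftest\circ(\Wcal)^{-1}(1-I)],\ ({\Gncal{w,(n)}})^{-1}(1-I))$ --- by independence and Slutsky's lemma --- and then apply the multivariate form of Lemma~\ref{cor:JointComposition} to insert the common random time change $({\Gncal{w,(n)}})^{-1}(1-I)$ into the first coordinate. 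I expect the only real obstacle to be this bookkeeping: keeping $({\Gncal{w,(n)}})^{-1}(1-I)$ as an auxiliary coordinate until the very last step (so that the time change can be applied jointly to both coordinates, exactly as in the proof of Proposition~\ref{prop:BBCouple}), and tracking the reflections $t\mapsto 1-t$ through all the compositions so that the limits get identified with $G^w$, $I^w$ and $\rw$ defined in \eqref{eq:altcdfWeight}, \eqref{eq:varianceIdentity} and \eqref{eq:varianceratio}. All the genuinely new analytic work --- the $\mathcal{U}(0,1)$-Donsker property of the class $\set{x\mapsto w(x)\1{x\leq t}:t\in\R}$ and the resulting limit law of the weighted quantile process --- is already isolated in Lemma~\ref{lemma:DonskerWeight}.
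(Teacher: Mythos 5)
Your proposal is correct and follows essentially the same route as the paper: standardisation via Lemma~\ref{lemma:UniformCal}, Lemma~\ref{lemma:DonskerWeight} for the weighted quantile process, the Hadamard-differentiable composition with $\Ftest$ for the calibration coordinate, independence of $\dcal$ and $\dtest$ for the joint limit, and Lemma~\ref{cor:JointComposition} for the random time change in the test coordinate. Your write-up is in fact slightly more explicit than the paper's on the functional delta method step and on the identification of $G^w$, $I^w$ and $\rw$ after the reflection $t\mapsto 1-t$, but no new idea is involved.
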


\begin{proof}
By the Donsker theorem (Theorem~\ref{lemma:Donsker}), we have 
\begin{equation*}
\sqrt{m}\paren{\Fmtest-\Ftestcal}\overset{\Loi}{\rightarrow}\U(1-\Ftestcal) \text{ on $D(0,1)$},
\end{equation*}
where $\U$ is a standard Brownian bridge.
By Lemma~\ref{lemma:DonskerWeight} (which applies because we can standardize the calibration set, see Lemma~\ref{lemma:UniformCal}), we have 
\begin{align*}
\sqrt{n}\paren{{({\Gncal{w,(n)}})^{-1}}-({\Wcal})^{-1}}\overset{\Loi}{\rightarrow}(({\Wcal})^{-1})' \rw \paren{\U \paren{{{V}^w_{{\tiny\mbox{cal}}}}\circ ({\Wcal})^{-1}}+\brac{I-{{V}^w_{{\tiny\mbox{cal}}}}\circ ({\Wcal})^{-1}}N}\\\text{ on $D(0,1)$},
\end{align*}
where $\V$ is a standard Brownian bridge and $N$ is an independent standard Gaussian random variable. Thus, by independence between $\dcal$ and $\dtest$,
\begin{align*}
\begin{pmatrix}
\sqrt{m}\brac{\Fmtest-\Ftestcal}\\
\sqrt{n}\brac{\Ftest\circ{({\Gncal{w,(n)}})^{-1}}(1-I)-\Ftest\circ {\paren{\Wcal}^{-1}}(1-I)}
\end{pmatrix}
\overset{\Loi}{\rightarrow}
\begin{pmatrix}
\mathbb{U}\paren{1-\Ftestcal}\\
\rw(G^w)'\brac{\mathbb{V}\circ{I^w}+\brac{I-I^w}N}
\end{pmatrix}\\ \text{ on $\brac{D(0,1)}^2$},
\end{align*}
where $\U$ and $\V$ two independent Brownian bridges and $N$ an independent Gaussian r.v.. By Lemma~\ref{lemma:DonskerWeight}, we have that ${({\Gncal{w,(n)})}^{-1}}\overset{\P}{\rightarrow} ({\Wcal})^{-1}$ on $\ell^{\infty}(0,1)$ hence by using Lemma~\ref{cor:JointComposition} we get 
\begin{align*}
\begin{pmatrix}
\sqrt{m}\brac{\Fmtest\circ{({\Gncal{w,(n)}})^{-1}}(1-I)-\Ftest\circ{({\Gncal{w,(n)}})^{-1}}(1-I)}\\
\sqrt{n}\brac{\Ftest\circ{({\Gncal{w,(n)}})^{-1}}(1-I)-\Ftest\circ {\paren{\Wcal}^{-1}}(1-I)}
\end{pmatrix}
\overset{\Loi}{\rightarrow}
\begin{pmatrix}
\mathbb{U}\circ\paren{1-\Ftestcal\circ ({\Wcal})^{-1}(1-I)}\\
\rw(G^w)'\brac{\mathbb{V}\circ{I^w}+\brac{I-I^w}N}
\end{pmatrix}\\ \text{ on $\brac{D(0,1)}^2$},
\end{align*}
which concludes the proof.

\end{proof}

\subsection{Novelty detection setting}\label{sec:proof:ndweighted}

 We define
\begin{align*}
\Fw0(t)&=\frac{1}{m_0(m)}\sum_{i\in\range{m}\cap\cH_{0}} \1{\pw{i}{n}\leq t},\  t\in(0,1);\\
\Fw1(t)&=\frac{1}{m-m_0(m)}\sum_{i\in\range{m}\cap\cH_{0}^{c}} \1{\pw{i}{n}\leq t},\  t\in(0,1),
\end{align*} 
the counterparts of $\F0$ \eqref{eq:ecdfNovzero}  and  $\F1$ \eqref{eq:ecdfNovtest}, respectively.
Hence, the mixture e.c.d.f.
\begin{align*}
\Fmw{m}{n}(t)&=\frac{1}{m}\sum_{i\in\range{m}} \1{\pw{i}{n}\leq t}=\pi_0(m)\Fw0(t)+(1-\pi_0(m))\Fw1(t),\  t\in(0,1),
\end{align*} 
is the counterpart of $\Fm{m}{n}$~\eqref{eq:ecdfmixture}. 

The following result is the weighted version of Proposition~\ref{thr:conformalBridge} (with an oracle weight function).

\begin{proposition}\label{thr:conformalBridgeWeight}
In the novelty detection setting with Assumption~\ref{as:ND}, assume that $P_0$ is absolutely continuous with respect to $\Pcal$ and that the oracle weight function $w^*$  \eqref{eq:oracleWeightNovelty} satisfies Assumption~\ref{as:weight}. Under Assumption~\ref{as:strictCroissance}, assuming that $n/(n+m)\rightarrow \sigma^2\in[0,1]$ and $\pi_0(m)\rightarrow\pi_0\in(0,1)$, we have
\begin{align*}
\sqrt{\tau_{n,m}}
\begin{pmatrix}
\Fwo0-I\\
\Fwo1-G^{w^*}
\end{pmatrix}
\cvloi&
\begin{pmatrix}
\frac{\sigma}{\sqrt{\pi_0}}\mathbb{U}+\rwo\sqrt{1-\sigma^2}\paren{\mathbb{W}_{I^{w^*}}+\brac{I-I^{w^*}}N}\\
\frac{\sigma}{\sqrt{1-\pi_0}}\mathbb{V}_{G^{w^*}}+{(G^{w^*})}'\rw\sqrt{1-\sigma^2}\paren{\mathbb{W}_{I^{w^*}}+\brac{I-I^{w^*}}N}
\end{pmatrix}\\
&\eqqcolon
\begin{pmatrix}
\Zwo_0\\
\Zwo_1
\end{pmatrix}
\text{ on $\brac{D(0,1)}^2$}.
\end{align*}
with $\mathbb{U},\ \mathbb{V}$ and $\mathbb{W}$ three independent Brownian bridges and an independent standard Gaussian random variable $N$. Furthermore,
\begin{align*}
\sqrt{\tau_{n,m}}\paren{\Fmwo{m}{n}-\Flimw}\cvloi &\sqrt{\pi_0 \sigma^2}\mathbb{U}+\sqrt{\paren{1-\pi_0}\sigma^2}\mathbb{V}_{G^{w^*}}\\&+({\Flimw})'\rwo\sqrt{1-\sigma^2}\paren{\mathbb{W}_{I^{w^*}}+\brac{I-I^{w^*}}N}\\
&=\pi_0\Zwo_0+\paren{1-\pi_0}\Zwo_1\eqqcolon\Zwo \text{ on $D(0,1)$}.
\end{align*}
\end{proposition}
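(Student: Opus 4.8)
The plan is to run the argument of Proposition~\ref{thr:conformalBridge} essentially verbatim, but with the weighted calibration e.c.d.f.\ $\Gncal{w^*,(n)}$ of \eqref{eq:ecdfcalWeight} replacing $\Gncal{(n)}$, and with the weighted quantile limit theorem Lemma~\ref{lemma:DonskerWeight} replacing the usual Donsker-plus-delta-method treatment of the calibration term. First I would invoke the standardisation Lemma~\ref{lemma:UniformCal} to assume without loss of generality that $\Pcal=\mathcal{U}(0,1)$, so that Lemma~\ref{lemma:DonskerWeight} applies; in this normalization the null c.d.f.\ is $\Fzerocal$, the alternative c.d.f.\ is $\Ftestcal$, the oracle weight becomes $w^*\circ\Fcal^{-1}=\Fzerocal'$ (hence $\Wcal=\Fzerocal$), and the identity $G_0^{w^*}=I$ noted after \eqref{eq:mixtureLimitWeight} explains why the first coordinate of the limit is centered at $I$. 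Writing $\wh F_{m,r,\mathrm{test}}$ for the e.c.d.f.\ of $\{T_i:i\in\range{m}\cap\cH_r\}$, $r\in\{0,1\}$, I would then use the two decompositions obtained exactly as in \eqref{eq:ProcessDecompositionWeight} combined with the split used in the proof of Proposition~\ref{thr:conformalBridge}, one for $\sqrt{\tau_{n,m}}(\Fwo0-I)$ and one for $\sqrt{\tau_{n,m}}(\Fwo1-G^{w^*})$; crucially both feature the \emph{same} random time change ${(\Gncal{w^*,(n)})}^{-1}(1-I)$.

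Next I would establish the joint convergence of the five processes occurring in these decompositions: $\sqrt{m_0}\,[\wh F_{m,0,\mathrm{test}}-I]$, $\sqrt{m_1}\,[\wh F_{m,1,\mathrm{test}}-\Ftestcal]$, $\sqrt n\,[{(\Gncal{w^*,(n)})}^{-1}(1-I)-(\Wcal)^{-1}(1-I)]$, $\sqrt n\,[\Ftestcal\circ{(\Gncal{w^*,(n)})}^{-1}(1-I)-\Ftestcal\circ(\Wcal)^{-1}(1-I)]$, and the time change ${(\Gncal{w^*,(n)})}^{-1}(1-I)$ itself. The first two converge, by the Donsker theorem (Theorem~\ref{lemma:Donsker}), to independent Brownian bridges $\mathbb U$ and $\mathbb V(1-\Ftestcal)$; the third is governed by Lemma~\ref{lemma:DonskerWeight}, its limit being $\rwo\big(\mathbb W(I^{w^*})+[I-I^{w^*}]N\big)$ up to the deterministic factor carried by the $(1-I)$ time change, with the variance-inflation constant $\rwo$ and the extra Gaussian coordinate $N$ (coming from the weight and the $(n+1)^{-1}$ normalization); the fourth follows from the third by the Hadamard differentiability of $\varphi\mapsto\Ftestcal\circ\varphi$ (Lemma~\ref{lemma:hadamardComposition}), which introduces the factor $(G^{w^*})'$; and the fifth converges in probability to $(\Wcal)^{-1}(1-I)$ by the first part of Lemma~\ref{lemma:DonskerWeight}. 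Independence of the three families $(S_k)_k$, $(T_i)_{i\in\cH_0}$ and $(T_i)_{i\in\cH_1}$ gives the joint statement, and the composition Lemma~\ref{cor:JointComposition} (in its obvious extension to five processes) then lets me replace $\wh F_{m,r,\mathrm{test}}$ by $\wh F_{m,r,\mathrm{test}}\circ{(\Gncal{w^*,(n)})}^{-1}(1-I)$, the deterministic time change being absorbed so that $\mathbb U\circ I=\mathbb U$ and the $\mathbb V$-bridge becomes $\mathbb V\circ G^{w^*}$.

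Finally I would plug these limits into the two decompositions and apply Slutsky's lemma with $\tau_{n,m}/n\to 1-\sigma^2$, $\tau_{n,m}/m\to\sigma^2$, $m/m_0\to 1/\pi_0$ and $m/m_1\to 1/(1-\pi_0)$ to obtain the first displayed convergence; the shared summand $\rwo\sqrt{1-\sigma^2}\,(\mathbb W(I^{w^*})+[I-I^{w^*}]N)$ enters $\Zwo_0$ with coefficient $1$ and $\Zwo_1$ with coefficient $(G^{w^*})'$, which records the dependence induced by the common calibration quantile process. The second displayed convergence is then immediate from $\Fmwo{m}{n}=\pi_0(m)\Fwo0+(1-\pi_0(m))\Fwo1$, the analogous decomposition $\Flimw=\pi_0 G_0^{w^*}+(1-\pi_0)G^{w^*}=\pi_0 I+(1-\pi_0)G^{w^*}$, $\pi_0(m)\to\pi_0$, and linearity of the limit (noting $(\Flimw)'=\pi_0+(1-\pi_0)(G^{w^*})'$).

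The step I expect to be most delicate is the identification and propagation of the calibration-term limit: checking, via Lemma~\ref{lemma:DonskerWeight}, that the weighted quantile fluctuations contribute exactly $\rwo(\mathbb W(I^{w^*})+[I-I^{w^*}]N)$ (getting both the $\rwo$ factor and the additional Gaussian component right), and then tracking this single process through the two compositions and the two decompositions so that it appears, correctly scaled by $(G^{w^*})'$, in both limiting coordinates. The applicability of the composition Lemma~\ref{cor:JointComposition} on $D(0,1)$ with the random change of time ${(\Gncal{w^*,(n)})}^{-1}(1-I)$ (which requires its a.s./in-probability convergence to $(\Wcal)^{-1}(1-I)$, again from Lemma~\ref{lemma:DonskerWeight}) is the other technical point requiring care.
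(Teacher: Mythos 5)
Your proposal is correct and follows essentially the same route as the paper's proof: the paper likewise applies the argument of Proposition~\ref{prop:BBCoupleweight} twice (to the null and alternative test e.c.d.f.'s sharing the common calibration quantile process), obtains the joint convergence of the four/five processes via Donsker, Lemma~\ref{lemma:DonskerWeight}, the composition Hadamard derivative and the random change of time, and concludes with the decomposition~\eqref{eq:ProcessDecompositionWeight}, Slutsky's lemma and the continuous mapping theorem. Your additional remarks (standardisation via Lemma~\ref{lemma:UniformCal}, $\Wcalo=\Fzerocal$ and $G_0^{w^*}=I$ explaining the centering at $I$, and the shared $\mathbb{W}$/$N$ term carrying the calibration-induced dependence) match the paper's reasoning exactly.
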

\begin{proof}
We apply twice the argument of the proof of Proposition~\ref{prop:BBCoupleweight} to the null and the alternative processes to obtain
 $\U$, $\V$ and $\W$ three independent standard Brownian bridge and an independent standard Gaussian r.v. $N$ such that,
\begin{align*}
\begin{pmatrix}
\sqrt{m_0(m)}\paren{\wh{F}_{{\tiny \mbox{$m$,$0$,test}}}\circ{\paren{\Gncal{{w^*},(n)}}^{-1}}(1-I)-{\paren{\Gncal{{w^*},(n)}}^{-1}}(1-I)}\\
\sqrt{m_1(m)}\paren{\wh{F}_{{\tiny \mbox{$m$,$1$,test}}}\circ{\paren{\Gncal{{w^*},(n)}}^{-1}}(1-I)-\Ftest\circ{\paren{\Gncal{{w^*},(n)}}^{-1}}(1-I)}\\
\sqrt{n}\paren{\Wcalo\circ{\paren{\Gncal{{w^*},(n)}}^{-1}}(1-I)-(1-I)}\\
\sqrt{n}\paren{\Ftest\circ{\paren{\Gncal{{w^*},(n)}}^{-1}}(1-I)-\Ftest\circ{\paren{\Wcalo}^{-1}}(1-I)}
\end{pmatrix}
\end{align*}
converges when $\tau_{n,m}\rightarrow+\infty$ in distribution to
\begin{align*}
\begin{pmatrix}
\mathbb{U}\\
\mathbb{V}\circ G^{w^*}\\
\rwo\paren{\mathbb{W}\paren{I^{w^*}}+\brac{I-I^{w^*}}N}\\
\rwo(G^{w^*})'\paren{\mathbb{W}\paren{I^{w^*}}+\brac{I-I^{w^*}}N}
\end{pmatrix}
\text{ on $\brac{D(0,1)}^4$}.
\end{align*}
We conclude by using the decomposition~\eqref{eq:ProcessDecompositionWeight} on $\Fwo0$ and $\Fwo1$, the Slutsky lemma and the continuous mapping theorem.
\end{proof}

The following result is the  weighted version of Proposition~\ref{thr:BH95Process}. 

\begin{proposition}\label{thr:BH95ProcessWeight}Under Assumption~\ref{as:ND}, assume that $P_0$ is absolutely continuous with respect to $\Pcal$ and that the oracle weight function $w^*$  \eqref{eq:oracleWeightNovelty} satisfies Assumption~\ref{as:weight}. Under Assumptions~\ref{as:strictCroissance},  and \ref{as:concavityWeight}, assuming that the targeted level $\alpha>[(\Flimw)'(0^+)]^{-1}$, assuming that $n/(n+m)\rightarrow \sigma^2\in[0,1]$ and $\pi_0(m)\rightarrow\pi_0\in(0,1)$, we have,

 \begin{align*}
\sqrt{\tau_{n,m}}\paren{\T^{\BH_\alpha}(\Fmwo{m}{n})-\T^{{w^*}}_\alpha}&\cvloi
\frac{1}{\frac{1}{\alpha}-(\Flimw)'(\T^{{w^*}}_\alpha)}\Zwo({\T^{{w^*}}_\alpha});\\
\sqrt{\tau_{n,m}}\paren{\FDPmw\paren{\T^{\BH_\alpha}(\Fmwo{m}{n})}-\pi_0\alpha}&\cvloi \frac{\pi_0}{\Flimw(\T^{{w^*}}_\alpha)}\Zwo_0({\T^{{w^*}}_\alpha});\\
\sqrt{\tau_{n,m}}\paren{\TDPmw\paren{\T^{\BH_\alpha}(\Fmwo{m}{n})}-G^w(\T^{{w^*}}_\alpha)}&\cvloi \frac{{(G^{w^*})}'(\T^{{w^*}}_\alpha)}{\frac{1}{\alpha}-(\Flimw)'(\T^{{w^*}}_\alpha)}\Zwo({\T^{w}_\alpha})+\Zwo_1(\T^{w}_\alpha),
\end{align*}
where $\Zwo_0$, $\Zwo_1$ and $\Zwo{}$are the three processes defined in Proposition~\ref{thr:conformalBridgeWeight}.
\end{proposition}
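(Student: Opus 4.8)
The plan is to repeat, essentially verbatim, the argument of Section~\ref{proof:BH95Process}, with the weighted joint functional limit theorem of Proposition~\ref{thr:conformalBridgeWeight} playing the role of Proposition~\ref{thr:conformalBridge}, and the objects $(\Flim,\T_\alpha,\Z_0,\Z_1,\Z)$ replaced throughout by $(\Flimw,\T^{w^*}_\alpha,\Zwo_0,\Zwo_1,\Zwo)$. Under Assumption~\ref{as:concavityWeight} and the level condition $\alpha>[(\Flimw)'(0^+)]^{-1}$, the asymptotic threshold $\T^{w^*}_\alpha=\T^{\BH_\alpha}(\Flimw)$ is well defined and belongs to $(0,1)$, as recalled after \eqref{eq:asymptoticThresholdWeight}; since $\Flimw\leq 1$ we also have $\T^{w^*}_\alpha\leq\alpha<1$. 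I would therefore fix a compact interval $[a,b]\subset(0,1)$ with $0<a<\T^{w^*}_\alpha$ and $\alpha<b<1$, so that $\T^{w^*}_\alpha=\T^{\BH_\alpha}_{[a,b]}(\Flimw)$ for the restricted functional \eqref{eq:functionalBHab}. Recall also that the weighted \cite{bates2023testing} procedure is the thresholding procedure at $\T^{\BH_\alpha}(\Fmwo{m}{n})$, so that $\FDPmw(\mathcal{R}^{w^*}_\alpha)=\FDPmw(\T^{\BH_\alpha}(\Fmwo{m}{n}))$ and likewise for the $\TDP$.

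The first step is the localization, identical to the unweighted case: with $\eta=(\Flimw(a)-a\alpha^{-1})/2>0$ one has the elementary inclusion
\[
\set{\abs{\Fmwo{m}{n}(a)-\Flimw(a)}\leq\eta}\subset\set{\T^{\BH_\alpha}(\Fmwo{m}{n})=\T^{\BH_\alpha}_{[a,b]}(\Fmwo{m}{n})},
\]
whose left-hand side has probability tending to one because $\Fmwo{m}{n}(a)\to\Flimw(a)$ in probability, which is a consequence of Proposition~\ref{thr:conformalBridgeWeight}. Hence it suffices to prove the three convergences with the restricted functional $\T^{\BH_\alpha}_{[a,b]}$ in place of $\T^{\BH_\alpha}$.

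The second step is the functional delta method. Proposition~\ref{thr:conformalBridgeWeight} provides the joint convergence of $\sqrt{\tau_{n,m}}(\Fwo0-I,\ \Fwo1-G^{w^*},\ \Fmwo{m}{n}-\Flimw)$ on $[D(0,1)]^3$, hence on $[D[a,b]]^3$, to $(\Zwo_0,\Zwo_1,\Zwo)$, and the three limit coordinates have almost surely continuous sample paths, being continuous functionals of independent standard Brownian bridges and an independent Gaussian variable composed and multiplied with the continuous maps $I^{w^*}$, $(G^{w^*})'$, $(\Flimw)'$ that are well defined by Assumptions~\ref{as:strictCroissance} and~\ref{as:weight}. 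The maps $F\mapsto\T^{\BH_\alpha}_{[a,b]}(F)$, $F\mapsto\FDPmw(\T^{\BH_\alpha}_{[a,b]}(F))$ and $F\mapsto\TDPmw(\T^{\BH_\alpha}_{[a,b]}(F))$ are Hadamard differentiable at $\Flimw$ tangentially to $\mathcal{C}[a,b]$, with derivatives coinciding with those of the unrestricted BH functionals; this is precisely the input of \cite{Neu2008} (see also the supplement of \cite{DR2016} and Lemma~S.2.2 of \cite{kluger2024central}), recalled in Lemma~\ref{lemma:hadamardNeuvial}, whose hypotheses ($\Flimw$ concave and differentiable, $\alpha>[(\Flimw)'(0^+)]^{-1}$) hold here by Assumption~\ref{as:concavityWeight} and the level condition. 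Applying the delta method with these derivatives, and then undoing the localization of the first step, yields the three displayed limits.

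I do not expect a genuine obstacle: structurally this is the weighted mirror image of the proof of Proposition~\ref{thr:BH95Process}, and the new analytic content --- the behaviour of the calibration empirical quantile process under weights, where the factor $\rwo$ and the extra independent Gaussian term $N$ enter --- has already been isolated in Lemma~\ref{lemma:DonskerWeight} and Proposition~\ref{thr:conformalBridgeWeight}. The only point worth double-checking is that the weighted limit process $\Zwo$ is tangentially admissible for the Hadamard-differentiability lemma, i.e.\ has continuous paths on $[a,b]$; this is immediate from its explicit form in Proposition~\ref{thr:conformalBridgeWeight}, since $\mathbb{W}$ is a.s.\ continuous, $I^{w^*}$ and $(G^{w^*})'$ are continuous, and $N$ is finite.
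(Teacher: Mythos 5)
Your proposal is correct and follows exactly the route the paper takes: the paper's own proof simply states that the argument is \emph{mutatis mutandis} that of Section~\ref{proof:BH95Process}, substituting the weighted joint convergence of Proposition~\ref{thr:conformalBridgeWeight}, and your spelled-out localization to $[a,b]$ plus the functional delta method via Lemma~\ref{lemma:hadamardNeuvial} is precisely that argument.
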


\begin{proof}[Proof of Proposition~\ref{thr:BH95ProcessWeight}]
It is mutatis mutandis the same proof as in Section~\ref{proof:BH95Process} by using the weighted processes and the weighted convergence in distribution delineated in Proposition~\ref{thr:conformalBridgeWeight}.
\end{proof}

\section{Weighted novelty detection for a general weight function}\label{sec:BHnonUniform}

In this section, we extend Propositions~\ref{thr:conformalBridgeWeight}~and~\ref{thr:BH95ProcessWeight} to the case of a general weight function.
We should add the following technical assumption (which was implicitly satisfied in the oracle case):

\begin{assumption}\label{as:ContinuousdifferentiableNull}
$F_0$ is continuously differentiable.
\end{assumption}
Let us introduce 
\begin{align*}
\bp(t)&=\frac{\pi_0G_0^w(t)}{\Flimwnull(t)}\\
\zeta_\alpha(t)&=1-(1-\pi_0)\frac{({G_0^w})'G^w-{G_0^w}({G^w})'}{\paren{\alpha^{-1}-(\Flimwnull)'(t)}G_0^w}
\end{align*}
where $\bp(t)$ corresponds to the positive false discovery rate at  $t$ \citep{Storey2002}. Note that in the oracle case $\zeta_\alpha(\T^{w^*}_\alpha)=0$, but $\zeta_\alpha(\T^{w}_\alpha)$ is not necessarily zero for a general $w$.  

\begin{proposition}\label{thr:conformalBridgenonNull}
Under Assumptions~\ref{as:ND}, \ref{as:strictCroissance},  \ref{as:ContinuousdifferentiableNull}, assuming that the weight function $w$ satisfies Assumption~\ref{as:weight}, and assuming that $n/(n+m)\rightarrow \sigma^2\in[0,1]$ and $\pi_0(m)\rightarrow\pi_0\in(0,1)$, we have
\begin{align*}
\sqrt{\tau_{n,m}}
\begin{pmatrix}
\Fw0-G_0^w\\
\Fw1-G^w
\end{pmatrix}
\cvloi
\begin{pmatrix}
\frac{\sigma}{\sqrt{\pi_0}}\mathbb{U}_{G_0^w}+({G_0^w})'\rw\sqrt{1-\sigma^2}\paren{\mathbb{W}_{I^w}+\brac{I-I^w}N}\\
\frac{\sigma}{\sqrt{1-\pi_0}}\mathbb{V}_{G^w}+({G^{w}})'\rw\sqrt{1-\sigma^2}\paren{\mathbb{W}_{I^w}+\brac{I-I^w}N}
\end{pmatrix}
\eqqcolon
\begin{pmatrix}
\Zw_0\\
\Zw_1
\end{pmatrix}\\
\text{ on $\brac{D(0,1)}^2$}.
\end{align*}
with $\mathbb{U},\ \mathbb{V}$ and $\mathbb{W}$ three independent Brownian bridges and an independent standard Gaussian random variable $N$. Furthermore,
\begin{align*}
\sqrt{\tau_{n,m}}\paren{\Fmw{m}{n}-\Flimwnull}\cvloi &\sqrt{\pi_0 \sigma^2}\mathbb{U}_{G_0^w}+\sqrt{\paren{1-\pi_0}\sigma^2}\mathbb{V}_{G^w}\\
&+({\Flimwnull})'\rw\sqrt{1-\sigma^2}\paren{\mathbb{W}_{I^w}+\brac{I-I^w}N}\\
&=\pi_0\Zw_0+\paren{1-\pi_0}\Zw_1\eqqcolon\Zw \text{ on $D(0,1)$}.
\end{align*}
\end{proposition}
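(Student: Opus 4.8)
The plan is to run the proof of Proposition~\ref{thr:conformalBridgeWeight} essentially verbatim, the only genuinely new feature being that the non-novelty test scores are now drawn from $F_0$ rather than from the weighted calibration law, so that the null e.c.d.f. $\Fw0$ is centred at $G_0^w\neq I$ in general and one extra composition step is required. First I would invoke Lemma~\ref{lemma:UniformCal} to reduce to $\Pcal=\mathcal{U}(0,1)$, replacing $w$ by $w\circ\Fcal^{-1}$; then the scores $T_i$, $i\in\cH_0$, have c.d.f. $\Fzerocal$ and the scores $T_i$, $i\in\cH_1$, have c.d.f. $\Ftestcal$, while the three families $(S_k)_k$, $(T_i)_{i\in\cH_0}$, $(T_i)_{i\in\cH_1}$ remain mutually independent. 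For $r\in\{0,1\}$ I would write the decomposition \eqref{eq:ProcessDecompositionWeight} for $\Fw{r}$ --- with $\Ftest$ replaced by $\Fzerocal$ when $r=0$ --- which splits $\sqrt{\tau_{n,m}}(\Fw{r}-G_r^w)$ (setting $G_1^w=G^w$) into a test term of order $\sqrt{m_r(m)}$ and a calibration term of order $\sqrt n$, both evaluated at ${({\Gncal{w,(n)}})^{-1}}(1-I)$. The decisive structural point, unchanged from the oracle case, is that the two decompositions feature the \emph{same} weighted calibration quantile process ${({\Gncal{w,(n)}})^{-1}}$, which is what forces a common Brownian bridge $\mathbb{W}$ and Gaussian $N$ into both limit coordinates.

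Next I would assemble the joint weak limit of the ingredients. Donsker's theorem (Theorem~\ref{lemma:Donsker}) applied separately to $(T_i)_{i\in\cH_0}$ and $(T_i)_{i\in\cH_1}$ gives standard-Brownian-bridge limits $\U(\Fzerocal)$ and $\V(\Ftestcal)$ for the two test e.c.d.f.'s, with $\U,\V$ independent; Lemma~\ref{lemma:DonskerWeight}, applicable by Assumption~\ref{as:weight}, gives the weak limit of $\sqrt n({({\Gncal{w,(n)}})^{-1}}-(\Wcal)^{-1})$ in terms of a third, independent, standard Brownian bridge $\mathbb{W}$ and an independent standard Gaussian $N$, together with ${({\Gncal{w,(n)}})^{-1}}\overset{\P}{\rightarrow}(\Wcal)^{-1}$ on $\ell^\infty(0,1)$. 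I would then compose the test e.c.d.f.'s with the random time change ${({\Gncal{w,(n)}})^{-1}}(1-I)$ using Lemma~\ref{cor:JointComposition}, and push the calibration term through the functional delta method with the maps $\phi\mapsto\Fzerocal\circ\phi$ and $\phi\mapsto\Ftestcal\circ\phi$ (Hadamard differentiable by Lemma~\ref{lemma:hadamardComposition}); the map for the null coordinate requires $\Fzerocal=F_0\circ\Fcal^{-1}$ to be continuously differentiable, which is exactly where Assumption~\ref{as:ContinuousdifferentiableNull}, together with Assumption~\ref{as:strictCroissance}, is used. After the routine $1-I$ time reversal --- which preserves the laws of the bridge and of $N$ --- and the identifications $({G_0^w})'(t)=(\Fzerocal\circ(\Wcal)^{-1})'(1-t)$ and $V^w_{\mathrm{cal}}\circ(\Wcal)^{-1}(1-t)=1-I^w(t)$ coming from \eqref{eq:varianceIdentity}, the null calibration term converges to $({G_0^w})'\rw(\mathbb{W}_{I^w}+\brac{I-I^w}N)$ and the alternative calibration term to $({G^w})'\rw(\mathbb{W}_{I^w}+\brac{I-I^w}N)$, with the same $\mathbb{W}$ and $N$.

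Finally, I would substitute these limits into the two decompositions \eqref{eq:ProcessDecompositionWeight} for $\Fw0$ and $\Fw1$ and conclude by Slutsky's lemma and the continuous mapping theorem, using $m_0(m)/m\to\pi_0$, $m_1(m)/m\to1-\pi_0$, $\tau_{n,m}/m\to\sigma^2$ and $\tau_{n,m}/n\to1-\sigma^2$, to get the joint convergence of $\sqrt{\tau_{n,m}}(\Fw0-G_0^w,\Fw1-G^w)$ to $(\Zw_0,\Zw_1)$ on $[D(0,1)]^2$; the mixture statement then follows from $\Fmw{m}{n}=\pi_0(m)\Fw0+(1-\pi_0(m))\Fw1$ and $\Flimwnull=\pi_0 G_0^w+(1-\pi_0)G^w$ by one further application of Slutsky and continuous mapping. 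I do not expect a serious obstacle here --- the argument is mechanical once Proposition~\ref{thr:conformalBridgeWeight} is in hand. The one place needing genuine care, and the reason Assumption~\ref{as:ContinuousdifferentiableNull} is imposed, is the null calibration term: in the oracle case $G_0^{w^*}=I$ so that term is trivial, whereas for a general $w$ one must carry the nontrivial map $\phi\mapsto\Fzerocal\circ\phi$ through the delta method and keep the derivative $({G_0^w})'$ in the limiting covariance. A secondary bookkeeping point is to record correctly the dependence of the null and alternative coordinates on the common calibration fluctuation, so that the limit $(\Zw_0,\Zw_1)$ has precisely the stated non-diagonal covariance structure.
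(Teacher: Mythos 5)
Your proposal is correct and follows exactly the route the paper intends: the paper omits this proof, stating it is completely analogous to that of Proposition~\ref{thr:conformalBridgeWeight}, and your argument is precisely that adaptation, correctly isolating the one new ingredient (the nontrivial null calibration term, pushed through the delta method with $\phi\mapsto\Fzerocal\circ\phi$, which is where Assumption~\ref{as:ContinuousdifferentiableNull} enters) and the shared calibration fluctuation that produces the common $\mathbb{W}$ and $N$ in both coordinates.
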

The proof of Proposition~\ref{thr:conformalBridgenonNull} is omitted because it is completely analogue to the one  of Proposition~\ref{thr:conformalBridgeWeight}.

\begin{theorem}\label{thr:BH95ProcessnonNull}
Under Assumptions~\ref{as:ND}, \ref{as:strictCroissance},  \ref{as:concavityWeight}, \ref{as:ContinuousdifferentiableNull},  assuming that $\alpha>[(\Flimwnull)'(0^+)]^{-1}$ and that the weight function $w$ satisfies Assumption~\ref{as:weight}. Then if $n/(n+m)\rightarrow \sigma^2\in[0,1]$ and $\pi_0(m)\rightarrow\pi_0\in(0,1)$, we have
\begin{align*}
\sqrt{\tau_{n,m}}\paren{\T^{\BH_\alpha}(\Fmw{m}{n})-\T^{w}_\alpha}&\cvloi
\frac{1}{\frac{1}{\alpha}-(\Flimw)'(\T^{w}_\alpha)}\Zw({\T^{w}_\alpha});\\
\sqrt{\tau_{n,m}}\paren{\FDPmw\paren{\cR^w_\alpha}-\bp\paren{\T^{w}_\alpha}}&\cvloi \frac{\Zw_0({\T^{w}_\alpha})}{G_0^w\paren{\T^{w}_\alpha}}\bp\paren{\T^{w}_\alpha}\brac{1-\bp\paren{\T^{w}_\alpha}\zeta_\alpha\paren{\T^{w}_\alpha}}\\
&\phantom{\cvloi}\ -\frac{\Zw_1({\T^{w}_\alpha})}{G^w\paren{\T^{w}_\alpha}}\bp\paren{\T^{w}_\alpha}\brac{1-\bp\paren{\T^{w}_\alpha}}\zeta_\alpha\paren{\T^{w}_\alpha};\\
\sqrt{\tau_{n,m}}\paren{\TDPm\paren{\cR^w_\alpha}-G^w(\T^{w}_\alpha)}&\cvloi \frac{({G^w})'(\T^{w}_\alpha)}{\frac{1}{\alpha}-(\Flimwnull)'(\T^{w}_\alpha)}\Zw({\T^{w}_\alpha})+\Zw_1(\T^{w}_\alpha),
\end{align*}
  where the three processes $(\Zw_0,\Zw_1,\Zw)$ are defined in Proposition~\ref{thr:conformalBridgenonNull}.
\end{theorem}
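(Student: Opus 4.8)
The plan is to follow the pipeline of Section~\ref{proof:BH95Process} essentially verbatim, with Proposition~\ref{thr:conformalBridgenonNull} playing the role that Proposition~\ref{thr:conformalBridge} played there, and with the Hadamard derivatives of the BH functionals recomputed because, for a general weight, $G_0^w$ is no longer the identity. First I would localize the BH functional: since $\alpha>[(\Flimwnull)'(0^+)]^{-1}$ and $\Flimwnull$ is strictly concave (Assumption~\ref{as:concavityWeight}), the asymptotic threshold $\T^{w}_\alpha=\T^{\BH_\alpha}(\Flimwnull)$ lies in $(0,1)$, and I would fix a compact $[a,b]\subset(0,1)$ with $\T^{w}_\alpha\in(a,b)$ and $\Flimwnull(a)>a/\alpha$, so that $\T^{w}_\alpha=\T^{\BH_\alpha}_{[a,b]}(\Flimwnull)$ and the inclusion \eqref{eq:focalIntervalProba} holds for $\Fmw{m}{n}$. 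On $[a,b]$ the denominator $\Flimwnull$ is bounded below by $\Flimwnull(a)>0$, so the truncation $\vee\,m^{-1}$ in $\FDPmw$ is asymptotically inactive at the (localized) threshold and the limiting value there is $\bp(\T^{w}_\alpha)=\pi_0 G_0^w(\T^{w}_\alpha)/\Flimwnull(\T^{w}_\alpha)$, while $\TDPm$ at the threshold tends to $G^w(\T^{w}_\alpha)$.

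Next I would feed in the joint functional central limit theorem: Proposition~\ref{thr:conformalBridgenonNull} gives $\sqrt{\tau_{n,m}}(\Fw0-G_0^w,\Fw1-G^w)\cvloi(\Zw_0,\Zw_1)$ on $[D[a,b]]^2$, hence, writing $\Fmw{m}{n}=\pi_0(m)\Fw0+(1-\pi_0(m))\Fw1$ and using $\pi_0(m)\to\pi_0$ together with Slutsky's lemma, $\sqrt{\tau_{n,m}}(\Fmw{m}{n}-\Flimwnull)\cvloi\Zw$. I would then regard the three targets as images of $(\Fw0,\Fw1)$ under compositions of: the addition map $(f_0,f_1)\mapsto\pi_0 f_0+(1-\pi_0)f_1$; the restricted threshold functional $\T^{\BH_\alpha}_{[a,b]}$, Hadamard differentiable at $\Flimwnull$ (Assumptions~\ref{as:strictCroissance}, \ref{as:ContinuousdifferentiableNull}, \ref{as:concavityWeight}) with derivative $h\mapsto h(\T^{w}_\alpha)/(\alpha^{-1}-(\Flimwnull)'(\T^{w}_\alpha))$, cf.\ Lemma~\ref{lemma:hadamardNeuvial}; evaluation at the perturbed threshold; and the quotient map. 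Here Assumption~\ref{as:ContinuousdifferentiableNull} is exactly what makes $G_0^w$ differentiable so that the chain rule applies, a point that was automatic in the oracle case where $G_0^{w^*}=I$. The functional delta method \citep{Vaart1998} then produces the stated limits with $(\Zw_0,\Zw_1,\Zw)$ substituted into the Hadamard derivatives, and the passage from $\T^{\BH_\alpha}_{[a,b]}$ back to the genuine $\T^{\BH_\alpha}$ follows from \eqref{eq:focalIntervalProba} and $\Fmw{m}{n}(a)\overset{\P}{\rightarrow}\Flimwnull(a)$, exactly as in Section~\ref{proof:BH95Process}.

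The main obstacle, and the only genuinely new computation, is identifying the Hadamard derivative of the $\FDP$ map with the stated expression in terms of $\bp$ and $\zeta_\alpha$. Writing $t^*=\T^{w}_\alpha$, $H=\Flimwnull$, $c=\alpha^{-1}-H'(t^*)$ and $h=\pi_0 h_0+(1-\pi_0)h_1$, the chain rule gives the derivative $\dfrac{\pi_0 h_0(t^*)}{H(t^*)}+\Big(\dfrac{\pi_0(G_0^w)'(t^*)-\bp(t^*)H'(t^*)}{c}-\bp(t^*)\Big)\dfrac{h(t^*)}{H(t^*)}$. The cancellation I would exploit is the algebraic identity $H(G_0^w)'-G_0^w H'=(1-\pi_0)\bigl(G^w(G_0^w)'-G_0^w(G^w)'\bigr)$, which combined with the definition of $\zeta_\alpha$ yields $\dfrac{\pi_0(G_0^w)'(t^*)-\bp(t^*)H'(t^*)}{c}=\bp(t^*)\bigl(1-\zeta_\alpha(t^*)\bigr)$, so the coefficient of $h(t^*)/H(t^*)$ collapses to $-\bp(t^*)\zeta_\alpha(t^*)$; rewriting $\pi_0 h_0/H=\bp\,h_0/G_0^w$ and $h/H=\bp\,h_0/G_0^w+(1-\bp)\,h_1/G^w$ at $t^*$ and inserting $h_0=\Zw_0$, $h_1=\Zw_1$ reproduces exactly the announced $\FDP$ limit. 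The $\T^{\BH_\alpha}$ and $\TDP$ limits then follow from the same chain-rule computation with no further simplification needed. I expect the bookkeeping in this last algebraic step (and checking that $\zeta_\alpha$ is finite at $t^*$, which uses $c>0$) to be the only delicate point; everything else is a routine re-run of the oracle argument of Proposition~\ref{thr:BH95ProcessWeight}.
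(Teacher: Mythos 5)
Your proposal is correct and follows exactly the route the paper intends (the paper omits this proof, declaring it analogous to Proposition~\ref{thr:BH95ProcessWeight}, which in turn reruns the localization--plus--functional-delta-method argument of Section~\ref{proof:BH95Process} with the FCLT of Proposition~\ref{thr:conformalBridgenonNull}). Your only genuinely new step --- the identification of the $\FDP$ Hadamard derivative via $H(G_0^w)'-G_0^wH'=(1-\pi_0)\bigl(G^w(G_0^w)'-G_0^w(G^w)'\bigr)$ and the rewriting $h/H=\bp\,h_0/G_0^w+(1-\bp)\,h_1/G^w$ --- checks out and reproduces the stated limit with the coefficients $\bp[1-\bp\zeta_\alpha]$ and $-\bp(1-\bp)\zeta_\alpha$.
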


The proof of 
Theorem~\ref{thr:BH95ProcessnonNull} is omitted because completely analogue to the one of Proposition~\ref{thr:BH95ProcessWeight}.

\section{Additional tools for asymptotics}

\subsection{Donsker and Glivenko-Cantelli theorems}

The results below can be found in \cite{Vaart1998} and \cite{SW1986}.

\begin{theorem}[Glivenko-Cantelli]\label{lemma:glivenkoCantelli}
Let $\Ftest$ be a c.d.f., and $\Fmtest$ be an empirical version of $\Ftest$ with $m$ i.i.d. points. Then,
\begin{equation*}
\norm{\Fmtest-\Ftest}_\infty\rightarrow 0 \text{ a.s.}
\end{equation*}
\end{theorem}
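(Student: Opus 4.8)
The plan is the classical one: reduce the uniform (in $t$) statement to a statement at finitely many points by using monotonicity, and then conclude pointwise via the strong law of large numbers.

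First I would fix an integer $k\ge 1$ together with a quantile grid $-\infty=x_0< x_1\le\cdots\le x_{k-1}<x_k=+\infty$, where $x_j=\Ftest^{-1}(j/k)$ for $1\le j\le k-1$ (generalized inverse). By definition of the generalized inverse, $\Ftest(x_j^-)\le j/k\le\Ftest(x_j)$, so each gap $\Ftest(x_j^-)-\Ftest(x_{j-1})$ is at most $1/k$. Then, for $t$ in a cell $[x_{j-1},x_j)$, monotonicity of both $\Fmtest$ and $\Ftest$ gives the sandwich
\begin{equation*}
\Fmtest(x_{j-1})-\Ftest(x_j^-)\le \Fmtest(t)-\Ftest(t)\le \Fmtest(x_j^-)-\Ftest(x_{j-1}),
\end{equation*}
whence, adding and subtracting $\Ftest(x_j^-)$ and $\Ftest(x_{j-1})$ and using the gap bound,
\begin{equation*}
\norm{\Fmtest-\Ftest}_\infty\le \max_{1\le j\le k-1}\paren{\abs{\Fmtest(x_j)-\Ftest(x_j)}\vee\abs{\Fmtest(x_j^-)-\Ftest(x_j^-)}}+\frac1k.
\end{equation*}

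Next I would invoke the strong law of large numbers. Writing $\Fmtest(t)=\frac1m\sum_{i=1}^m\ind{T_i\le t}$, for each fixed $j$ the summands $\ind{T_i\le x_j}$ (resp.\ $\ind{T_i< x_j}$) are i.i.d.\ with mean $\Ftest(x_j)$ (resp.\ $\Ftest(x_j^-)$), so $\Fmtest(x_j)\to\Ftest(x_j)$ and $\Fmtest(x_j^-)\to\Ftest(x_j^-)$ almost surely. Since there are only finitely many indices $j$, on an event $\Omega_k$ of probability one the maximum in the last display tends to $0$, hence $\limsup_m\norm{\Fmtest-\Ftest}_\infty\le 1/k$ on $\Omega_k$. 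Intersecting over $k\ge 1$ yields an event of probability one on which $\limsup_m\norm{\Fmtest-\Ftest}_\infty=0$, which is the claim.

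The only delicate bookkeeping — and what I would flag as the main, though entirely routine, obstacle — is the treatment of left limits and of possible atoms of $\Ftest$: one has to work with the left-continuous versions $\Fmtest(\cdot^-)$, $\Ftest(\cdot^-)$ and with the generalized inverse, which is precisely what makes the cell-wise sandwich valid without any regularity assumption on $\Ftest$. When, as in all the applications of this paper, $\Ftest$ is continuous, the left limits coincide with the values and the argument simplifies accordingly.
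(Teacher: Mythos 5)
Your proof is correct and complete; it is the classical quantile-grid/SLLN argument (as in van der Vaart's Theorem 19.1), which is exactly the proof behind the references the paper cites for this result without reproving it. The bookkeeping with left limits and the generalized inverse is handled properly, so nothing further is needed.
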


\begin{proposition}\label{lemma:glivenkoCantelliQuantile}
Let $(U_k)_{k\geq 1}$ be i.i.d. random variables uniformly distributed over $(0,1)$. Denote $\mathbb{F}^{(n)}$ the e.c.d.f. the family $\paren{U_k}_{k\in\range{n}}$. Then,  
\begin{equation*}
\norm{\paren{\mathbb{F}^{(n)}}^{-1}-I}_\infty\rightarrow 0 \text{ a.s.}
\end{equation*}
\end{proposition}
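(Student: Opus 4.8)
The plan is to reduce the statement to the classical Glivenko--Cantelli theorem (Theorem~\ref{lemma:glivenkoCantelli}) applied to $\mathbb{F}^{(n)}$ itself, whose underlying c.d.f. is $F$ given by $F(x)=\max(0,\min(x,1))$ (the c.d.f. of $\mathcal{U}(0,1)$, for which $F^{-1}=I$ on $(0,1)$). The bridge between uniform convergence of a c.d.f. and uniform convergence of its generalized inverse is the \emph{switching relation}: for a nondecreasing right-continuous function $H$ and $u$ strictly inside the range of $H$, one has $H^{-1}(u)\le t\iff H(t)\ge u$. Since both $F$ and each $\mathbb{F}^{(n)}$ are nondecreasing and right-continuous, the strategy is to transfer the (eventual, almost sure) $\eps$-closeness of $\mathbb{F}^{(n)}$ to $F$ into an $\eps$-closeness of $(\mathbb{F}^{(n)})^{-1}$ to $I$, uniformly over $u\in(0,1)$.

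First I would fix $\eps>0$; by Theorem~\ref{lemma:glivenkoCantelli} there is, almost surely, an integer $N$ with $\|\mathbb{F}^{(n)}-F\|_\infty<\eps$ for all $n\ge N$. Working on that almost sure event and fixing $n\ge N$ and $u\in(0,1)$, I would bound $(\mathbb{F}^{(n)})^{-1}(u)$ on both sides. For the upper bound: if $u+\eps<1$ then $\mathbb{F}^{(n)}(u+\eps)>F(u+\eps)-\eps=u$, so the switching relation gives $(\mathbb{F}^{(n)})^{-1}(u)\le u+\eps$; and if $u+\eps\ge1$ then $(\mathbb{F}^{(n)})^{-1}(u)\le\max_{k\in\range{n}}U_k<1\le u+\eps$. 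For the lower bound: if $u-\eps\le0$ the inequality $(\mathbb{F}^{(n)})^{-1}(u)\ge0\ge u-\eps$ holds because $\mathbb{F}^{(n)}$ vanishes on $(-\infty,0)$; and if $u-\eps>0$ then for every $t\in[0,u-\eps)$ we have $\mathbb{F}^{(n)}(t)<F(t)+\eps=t+\eps<u$, hence $(\mathbb{F}^{(n)})^{-1}(u)>t$ by the switching relation, and letting $t\uparrow u-\eps$ gives $(\mathbb{F}^{(n)})^{-1}(u)\ge u-\eps$. Combining the two bounds yields $|(\mathbb{F}^{(n)})^{-1}(u)-u|\le\eps$, uniformly in $u\in(0,1)$, for all $n\ge N$.

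It then follows that $\|(\mathbb{F}^{(n)})^{-1}-I\|_\infty\le\eps$ for all $n\ge N$ on this almost sure event, and since $\eps>0$ was arbitrary, $\|(\mathbb{F}^{(n)})^{-1}-I\|_\infty\to0$ almost surely. I do not expect a genuine obstacle here; the only point needing a little care is the behaviour near the endpoints of $(0,1)$, where $F$ is flat and where $(\mathbb{F}^{(n)})^{-1}(0)=-\infty$ (this is precisely why the supremum is over the open interval), and it is dispatched by the elementary case distinctions above. One could alternatively invoke the continuity of the inversion map at $F$ (Lemma~\ref{lemma:hadamardInverse}) together with Theorem~\ref{lemma:glivenkoCantelli}; the direct estimate above is, in essence, the proof of that continuity specialised to this setting.
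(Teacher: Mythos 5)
Your proof is correct. Note that the paper does not actually prove this proposition: it appears in the appendix under the heading of classical tools, with a blanket citation to van der Vaart (1998) and Shorack--Wellner (1986), so there is no internal argument to compare against. What you give is the standard textbook derivation: Glivenko--Cantelli for $\mathbb{F}^{(n)}$ plus the switching relation $H^{-1}(u)\le t\iff H(t)\ge u$ (valid here because $\mathbb{F}^{(n)}$ is a right-continuous step function, so the infimum defining $(\mathbb{F}^{(n)})^{-1}$ is attained), with the endpoint cases handled explicitly; this is in effect a proof of the continuity of the inversion map at the uniform c.d.f., which is the route the paper itself takes elsewhere (e.g.\ in the proof of Proposition~\ref{prop:BBCouple} it invokes continuity of the inverse map at $\Fcal$ rather than an $\eps$-estimate). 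The case analysis is sound: in particular the lower bound via $t\uparrow u-\eps$ and the upper bound via $\max_k U_k<1$ are exactly what is needed to make the supremum over the open interval $(0,1)$ finite and small. One reading convention is worth flagging: your argument implicitly takes $\mathbb{F}^{(n)}$ to be the standard e.c.d.f.\ normalized by $n$ (so that $\mathbb{F}^{(n)}(\max_k U_k)=1$); this is the only convention under which the stated supremum is even finite, since with the paper's $1/(n+1)$-normalized variant $\Gncal{(n)}$ the generalized inverse is $+\infty$ for $u>n/(n+1)$. So your interpretation is the right one, and the proof goes through as written.
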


\begin{theorem}[\cite{Don1952}]\label{lemma:Donsker}
Let $(X_i)_{i\geq1}$ i.i.d. real random variables. Let $F$ the cumulative distribution function of $X_1$ and $\mathbb{F}_{n}$ the empirical cdf of $\{X_1,\cdots,X_n\}$.  Then,
\begin{equation*}
\sqrt{n}\paren{\mathbb{F}_n-F}\overset{\Loi}{\rightarrow}{\mathbb{U}_F},
\end{equation*}
with $\mathbb{U}$ a standard Brownian bridge. 
\end{theorem}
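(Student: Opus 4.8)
The plan is to reduce the statement to the empirical process of i.i.d. uniform variables and then run the classical ``finite-dimensional convergence plus tightness'' argument, so that the claim becomes an instance of Donsker's invariance principle for the empirical process. First I would pass to the uniform case via the quantile coupling: realize the sample as $X_i = F^{-1}(U_i)$ with $(U_i)_{i\ge 1}$ i.i.d. uniform on $(0,1)$ and $F^{-1}$ the generalized inverse. Using the elementary equivalence $F^{-1}(u)\le t \iff u\le F(t)$, one gets pathwise $\mathbb{F}_n = \mathbb{G}_n\circ F$, where $\mathbb{G}_n$ is the empirical c.d.f. of $(U_i)_{i\le n}$, hence $\sqrt{n}(\mathbb{F}_n-F) = \beta_n\circ F$ with $\beta_n := \sqrt{n}(\mathbb{G}_n-I)$ the uniform empirical process. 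Since $F:\R\to[0,1]$ is nondecreasing, the composition map $x\mapsto x\circ F$ is $1$-Lipschitz for the supremum norm and sends $\mathcal{C}[0,1]$ into $D(\R)$; therefore, once we know $\beta_n\overset{\Loi}{\rightarrow}\mathbb{U}$ in $D[0,1]$ for a standard Brownian bridge $\mathbb{U}$ (whose paths are a.s. continuous), the continuous mapping theorem gives $\sqrt{n}(\mathbb{F}_n-F) = \beta_n\circ F \overset{\Loi}{\rightarrow} \mathbb{U}\circ F =: \mathbb{U}_F$, which is the asserted convergence.

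It then remains to prove the uniform Donsker theorem $\beta_n\overset{\Loi}{\rightarrow}\mathbb{U}$ in $D[0,1]$, for which I would verify the two standard ingredients. For the finite-dimensional distributions, fix $0\le t_1<\dots<t_k\le 1$; the vector $(\beta_n(t_1),\dots,\beta_n(t_k)) = n^{-1/2}\sum_{j=1}^n \big(\mathds{1}\{U_j\le t_\ell\}-t_\ell\big)_{1\le \ell\le k}$ is a normalized sum of i.i.d. centered bounded vectors, so by the Cram\'er--Wold device and the multivariate central limit theorem it converges to $\mathcal{N}(0,\Gamma)$ with $\Gamma_{\ell\ell'} = t_\ell\wedge t_{\ell'}-t_\ell t_{\ell'}$, which is exactly the covariance of $(\mathbb{U}(t_1),\dots,\mathbb{U}(t_k))$. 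For tightness in $D[0,1]$, I would use that $n\big(\mathbb{G}_n(t)-\mathbb{G}_n(s)\big)\sim\mathrm{Bin}(n,t-s)$ for $s\le t$ and that the counts over disjoint intervals are jointly multinomial, which yields the increment bound $\mathbb{E}\big[(\beta_n(t)-\beta_n(s))^2(\beta_n(u)-\beta_n(t))^2\big]\le C\,(u-s)^2$ for all $s\le t\le u$; Billingsley's moment criterion for the Skorohod space $D[0,1]$ then gives tightness, the boundary values being deterministic since $\beta_n(0)=\beta_n(1)=0$. Combining the two ingredients yields $\beta_n\overset{\Loi}{\rightarrow}\mathbb{U}$, the limit being the Gaussian process with the Brownian-bridge covariance and continuous paths, i.e. a standard Brownian bridge.

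An equivalent route, closer to the empirical-process machinery used elsewhere in the paper, is to observe that the class of indicators $\set{\mathds{1}_{(-\infty,t]}\telque t\in\R}$ is a VC class, hence $P$-Donsker for every probability distribution $P$ on $\R$ (see e.g. \cite{Vaart1998}), which gives the uniform central limit theorem $\sqrt{n}(\mathbb{F}_n-F)\overset{\Loi}{\rightarrow}\mathbb{U}_F$ in $\ell^{\infty}(\R)$ directly, bypassing the uniformization step. Either way, the only non-routine point is the tightness / asymptotic equicontinuity of the empirical process: the finite-dimensional convergence is just the CLT and the reduction to the uniform case is a soft continuity argument. Hence I expect the main obstacle to be the increment moment bound above --- or, in the entropy-based proof, the chaining estimate controlling $\sup_{|t-s|\le\delta}\abs{\beta_n(t)-\beta_n(s)}$.
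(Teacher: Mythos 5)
Your proposal is correct, but note that the paper does not prove this statement at all: it is the classical Donsker/Kolmogorov--Smirnov invariance principle, stated with a citation to \cite{Don1952} and, in the appendix, explicitly deferred to \cite{Vaart1998} and \cite{SW1986}. What you have written is a sound outline of the textbook proof: the quantile coupling $X_i=F^{-1}(U_i)$ together with the Galois inequality $F^{-1}(u)\le t \iff u\le F(t)$ does give $\mathbb{F}_n=\mathbb{G}_n\circ F$ pathwise, the finite-dimensional limits follow from the multivariate CLT with covariance $t\wedge s-ts$, and the fourth-moment increment bound $\E[(\beta_n(t)-\beta_n(s))^2(\beta_n(u)-\beta_n(t))^2]\le C(u-s)^2$ (obtained from the multinomial joint law of the counts over disjoint intervals) is exactly Billingsley's tightness criterion for $D[0,1]$ with deterministic boundary values. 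Two small remarks. First, your composition step $\beta_n\circ F\to\mathbb{U}\circ F$ needs the extended continuous mapping theorem (or the equivalence of Skorohod and uniform convergence when the limit is continuous), since $\beta_n\notin\mathcal{C}[0,1]$; this is the same device the paper packages as Lemma~\ref{cor:JointComposition} and Lemma~\ref{lemma:equivalenceCv}, and there is the usual measurability caveat for the sup-norm on $D$, which both you and the paper gloss over. Second, your alternative route via the VC class $\{\mathds{1}_{(-\infty,t]}\}$ being $P$-Donsker in $\ell^\infty(\R)$ is actually the formulation most consistent with how the theorem is used elsewhere in the paper (e.g.\ the weighted class $\mathcal{F}=\{w_t\}$ being $\Pcal$-Donsker in Lemma~\ref{lemma:DonskerWeight}), so if you were to include a proof, that is the version to prefer.
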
 

\begin{proposition}
Let $w$ a weight function satisfying Assumption~\ref{as:weight}.
Then the family of function $\mathcal{F}=\set{w_t:x\in\R\mapsto w(x)\1{x\leq t}; t\in\R}$ is $\Pcal$-Donsker and $\Pcal$-Glivenko-Cantelli.
\end{proposition}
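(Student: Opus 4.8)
The claim is that the class $\mathcal{F} = \{w_t : x \mapsto w(x)\1{x\leq t};\ t\in\R\}$ is $\Pcal$-Donsker and $\Pcal$-Glivenko--Cantelli, under Assumption~\ref{as:weight}. The plan is to exploit the fact that $w$ is uniformly bounded (say by $M>0$) and that the indicator part $\{\1{\cdot\leq t}; t\in\R\}$ is a Vapnik--Chervonenkis class, then invoke stability of Donsker/Glivenko--Cantelli classes under multiplication by a fixed bounded function. Concretely, write $w_t = w\cdot g_t$ with $g_t(x)=\1{x\leq t}$. The collection $\mathcal{G}=\{g_t; t\in\R\}$ is the prototypical VC-subgraph class (VC index $2$), hence $\mathcal{G}$ is universally Donsker and universally Glivenko--Cantelli with constant envelope $1$.

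First I would recall the relevant permanence property: if $\mathcal{G}$ is a $P$-Donsker class with a square-integrable envelope and $h$ is a fixed bounded measurable function, then $h\cdot\mathcal{G} = \{h g : g\in\mathcal{G}\}$ is again $P$-Donsker; see, e.g., \cite{VW1996}, Section~2.10 (Examples~2.10.7--2.10.10) on preservation of the Donsker property. Here $h=w$ is bounded by $M$ and measurable by Assumption~\ref{as:weight}, and the envelope of $\mathcal{G}$ is the constant $1$, which is trivially in $L^2(\Pcal)$; the product class $\mathcal{F}=w\cdot\mathcal{G}$ then has envelope $M<\infty$, so the hypotheses are met and $\mathcal{F}$ is $\Pcal$-Donsker. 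The same argument, or simply the fact that every Donsker class with integrable envelope is Glivenko--Cantelli, gives the $\Pcal$-Glivenko--Cantelli property; alternatively one notes directly that $\mathcal{G}$ is $P$-Glivenko--Cantelli (VC classes are), and multiplication by a bounded measurable function preserves the Glivenko--Cantelli property as well (\cite{VW1996}, Section~2.4).

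Alternatively, and perhaps more self-contained, one can argue via bracketing or uniform entropy: for any probability measure $Q$, the $L^2(Q)$ covering numbers of $\mathcal{F}$ are controlled by those of $\mathcal{G}$ up to the factor $M$, because $\|w g_t - w g_s\|_{L^2(Q)} \leq M\,\|g_t-g_s\|_{L^2(Q)}$; since $\mathcal{G}$ has polynomial uniform covering numbers $N(\varepsilon,\mathcal{G},L^2(Q))\lesssim (1/\varepsilon)^{2}$ uniformly in $Q$, the uniform entropy integral $\int_0^1 \sqrt{\log \sup_Q N(\varepsilon M, \mathcal{F}, L^2(Q))}\,\dd\varepsilon$ is finite, and with the constant envelope $M\in L^2(\Pcal)$ the Donsker theorem for uniformly-bounded-entropy classes (\cite{VW1996}, Theorem~2.5.2) applies.

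The only mild subtlety — and the closest thing to an obstacle — is measurability of the relevant suprema, which is standard and handled by the fact that $\mathcal{F}$ is indexed by $t\in\R$ with right-continuous sample paths, so it is a pointwise-measurable class (or "image admissible Suslin"); hence the usual outer-probability formulation of the empirical process theorems applies without extra fuss. The continuity and monotonicity structure coming from $\Wcal$ being increasing and continuously differentiable (Assumption~\ref{as:weight}) is not needed for this particular statement — only boundedness and measurability of $w$ enter — so no further work is required.
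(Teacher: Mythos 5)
Your argument is correct. Note that the paper itself does not actually prove this proposition: it is stated without proof in the appendix, and at the two places where it is used (in the proof of Lemma~\ref{lemma:DonskerWeight}) the Donsker and Glivenko--Cantelli properties are simply asserted with a citation to \cite{SW1986}. Your route --- writing $w_t=w\cdot g_t$ with $g_t=\1{\cdot\leq t}$, observing that $\{g_t\}$ is a VC-subgraph class, and invoking the preservation of the Donsker/Glivenko--Cantelli properties under multiplication by a fixed bounded measurable function (equivalently, the uniform-entropy bound $\|wg_t-wg_s\|_{L^2(Q)}\leq M\|g_t-g_s\|_{L^2(Q)}$ together with the constant envelope $M\in L^2(\Pcal)$ and Theorem~2.5.2 of \cite{VW1996}) --- is a standard and valid way to supply the missing justification. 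Your handling of the measurability issue via pointwise measurability of the right-continuous-in-$t$ class, and your observation that only boundedness and measurability of $w$ (not the regularity of $\Wcal$) are needed, are both correct. In short, the proposal is sound and, if anything, more explicit than what the paper provides.
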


\subsection{Hadamard differentiability}

The three first results below can be found in \cite{Vaart1998}. 

\begin{lemma}\label{lemma:hadamardComposition}
Let $F:[a,b]\rightarrow \R$ be a continuously differentiable function. The map $T:\phi\mapsto F\circ\phi$ with entries being functions $\phi:E\mapsto[a,b]$ contained in $\ell^{\infty}(E)$ is Hadamard differentiable tangentially to $\ell^\infty(E)$ with derivative:
\begin{equation*}
\dot{J}_\phi(H)=F'(\phi)H.
\end{equation*}
\end{lemma}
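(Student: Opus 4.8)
The plan is to verify Hadamard differentiability straight from the definition. Fix $\phi\in\ell^\infty(T)$ taking values in $[a,b]$, a sequence $t_n\to0$ with $t_n\neq0$, and a sequence $H_n\to H$ in $\ell^\infty(T)$ with $H\in\mathcal{C}(T)$ and such that $\phi+t_nH_n$ again takes values in $[a,b]$ (this last constraint is part of the definition of Hadamard differentiability for a map defined on the subset $\{\psi\in\ell^\infty(T):\psi(T)\subseteq[a,b]\}$). I must show that
\begin{equation*}
\norm{\frac{F\circ(\phi+t_nH_n)-F\circ\phi}{t_n}-F'(\phi)\,H}_\infty\longrightarrow0 ,
\end{equation*}
and that $H\mapsto F'(\phi)H$ is a bounded linear map; the latter is immediate since $\norm{F'(\phi)H}_\infty\leq\norm{F'}_{\infty,[a,b]}\norm{H}_\infty$, using that $F'$ is continuous, hence bounded, on the compact interval $[a,b]$ (this also shows $F\circ\phi\in\ell^\infty(T)$).

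First I would rewrite the difference quotient pointwise. For each $s\in T$, since $[a,b]$ is convex the whole segment $\{\phi(s)+u\,t_nH_n(s):u\in[0,1]\}$ lies in $[a,b]$, so the fundamental theorem of calculus gives
\begin{equation*}
\frac{F(\phi(s)+t_nH_n(s))-F(\phi(s))}{t_n}=H_n(s)\int_0^1F'\!\big(\phi(s)+u\,t_nH_n(s)\big)\,\dd u .
\end{equation*}
Subtracting $F'(\phi(s))H(s)$, splitting into the two natural pieces and taking absolute values, then taking the supremum over $s\in T$, the quantity to control is at most
\begin{equation*}
\norm{H_n-H}_\infty\,\norm{F'}_{\infty,[a,b]}+\norm{H}_\infty\,\sup_{s\in T}\ \sup_{u\in[0,1]}\big|F'(\phi(s)+u\,t_nH_n(s))-F'(\phi(s))\big| .
\end{equation*}

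The first term vanishes because $H_n\to H$ uniformly. For the second, note that $\sup_{s\in T}\sup_{u\in[0,1]}|u\,t_nH_n(s)|\leq|t_n|\sup_n\norm{H_n}_\infty\to0$ (the supremum being finite since $H_n$ converges), so by the uniform continuity of $F'$ on the compact set $[a,b]$ the double supremum tends to $0$. This establishes the convergence of the difference quotient to $F'(\phi)H$ in $\ell^\infty(T)$, hence the claimed Hadamard differentiability tangentially to $\mathcal{C}(T)$ with derivative $\dot T_\phi(H)=F'(\phi)H$. There is no genuinely hard step here; the only point requiring attention is that every estimate must be uniform over $T$, which is exactly what the boundedness and uniform continuity of $F'$ on $[a,b]$ provide. (In fact the argument never uses $H\in\mathcal{C}(T)$, so differentiability holds tangentially to all of $\ell^\infty(T)$; only the weaker form stated is needed in the sequel.)
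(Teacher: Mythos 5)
Your proof is correct: the paper gives no proof of this lemma, simply citing \cite{Vaart1998}, and your argument (fundamental theorem of calculus plus uniform continuity of $F'$ on the compact $[a,b]$, with all estimates taken uniformly over $T$) is exactly the standard one found there. Your closing remark that continuity of $H$ is never used, so that differentiability in fact holds tangentially to all of $\ell^{\infty}(T)$, is also accurate.
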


\begin{assumption}\label{as:DerivativePositive}
The cumulative distribution function $F$ have a compact support $[a,b]$ in the sense that that for all $x\leq a$, $F(x)=0$, for all $x\geq b$, $F(x)=1$ and for all $x\in(a,b)$, $0<F(x)<1$.
Furthermore, $F$ is continuously differentiable on $(a,b)$ with stricly positive derivative.
\end{assumption}

\begin{lemma}\label{lemma:hadamardInverse}
Let $F$ satisfying Assumption~\ref{as:DerivativePositive}. Then the inverse map $G\mapsto G^{-1}$ with domain the set of cumulative distribution function of probability measure on $(a,b]$ with value in $\ell^{\infty}(0,1)$ is Hadamard differentiable at $F$ tangentially to $\mathcal{C}[a,b]$ with derivative being the map\begin{equation*}
H\mapsto -H\circ F^{-1} {F^{-1}}'=-\frac{H\circ F^{-1}}{F'\circ F^{-1}}.
\end{equation*}
\end{lemma}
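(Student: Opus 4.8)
The plan is to unwind the definition of Hadamard differentiability tangentially to $\mathcal{C}[a,b]$ and reduce everything to a mean value argument combined with the uniform continuity of $F'$ and of the tangent direction; as the paper notes, the statement is essentially the inverse/quantile-map differentiability established in \cite{Vaart1998}, the only point requiring attention being uniformity over compact subsets of $(0,1)$ (the topology on $\ell^{\infty}(0,1)$ used here). Concretely, I would fix an arbitrary sequence $t_n\to 0$ in $\R\setminus\{0\}$ and functions $h_n\to h$ uniformly on $[a,b]$ with $h\in\mathcal{C}[a,b]$ and such that $G_n\coloneqq F+t_nh_n$ is the c.d.f.\ of a probability measure on $(a,b]$ (so that it belongs to the domain), and I would show that $\phi_n\coloneqq t_n^{-1}(G_n^{-1}-F^{-1})$ converges to $-(h\circ F^{-1})(F^{-1})'=-(h\circ F^{-1})/(F'\circ F^{-1})$ uniformly on each compact $K\subset(0,1)$.

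The first step would be to prove $G_n^{-1}\to F^{-1}$ uniformly on $K$. Using the elementary properties of the generalized inverse --- namely $G_n(x)<p$ for $x<G_n^{-1}(p)$ and $G_n(G_n^{-1}(p))\geq p$ --- together with $\|G_n-F\|_\infty\leq t_n\|h_n\|_\infty$, I would sandwich $F^{-1}(p-t_n\|h_n\|_\infty)\leq G_n^{-1}(p)\leq F^{-1}(p+t_n\|h_n\|_\infty)$ for $n$ large and invoke uniform continuity of $F^{-1}$ on compacts; in particular $G_n^{-1}(p)$ eventually lies, for all $p\in K$, in a fixed compact $[a',b']\subset(a,b)$ on which $F'$ is bounded away from $0$ (using that $F^{-1}$ maps compacts of $(0,1)$ into compacts of $(a,b)$). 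A key refinement I would establish here is that the overshoot $r_n(p)\coloneqq G_n(G_n^{-1}(p))-p\geq 0$ is $o(t_n)$ uniformly in $p$: since $F$ is continuous, the jumps of $G_n$ are $t_n$ times those of $h_n$, $r_n(p)$ is at most the jump of $G_n$ at $G_n^{-1}(p)$, and uniform convergence $h_n\to h$ with $h$ continuous forces every jump of $h_n$ to be $\leq 2\|h_n-h\|_\infty$, whence $0\leq r_n(p)\leq 2t_n\|h_n-h\|_\infty$.

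The second step would be the mean value argument. From $G_n(G_n^{-1}(p))=F(G_n^{-1}(p))+t_nh_n(G_n^{-1}(p))$ and $F(F^{-1}(p))=p$ I would obtain $F(G_n^{-1}(p))-F(F^{-1}(p))=r_n(p)-t_nh_n(G_n^{-1}(p))$, so the mean value theorem gives $\theta_{n,p}$ between $F^{-1}(p)$ and $G_n^{-1}(p)$ with
\[
F'(\theta_{n,p})\,\phi_n(p)=\frac{r_n(p)}{t_n}-h_n\big(G_n^{-1}(p)\big).
\]
Letting $n\to\infty$, uniformly on $K$ one has $r_n(p)/t_n\to 0$ (Step 1), $h_n(G_n^{-1}(p))\to h(F^{-1}(p))$ (since $h_n\to h$ uniformly, $h$ is uniformly continuous, and $G_n^{-1}\to F^{-1}$ uniformly on $K$), and $F'(\theta_{n,p})\to F'(F^{-1}(p))$ while staying bounded away from $0$ (since $\theta_{n,p}\to F^{-1}(p)$ uniformly and $F'$ is continuous on $[a',b']$); dividing yields $\phi_n(p)\to -h(F^{-1}(p))/F'(F^{-1}(p))$ uniformly on $K$, which is the asserted derivative, recalling $(F^{-1})'=1/(F'\circ F^{-1})$.

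I expect the only genuinely delicate point to be the control of the overshoot $r_n(p)$: a crude bound only gives $r_n(p)=O(t_n)$, which would survive division by $t_n$ and destroy the limit, and the resolution --- that $G_n$'s jumps, being $t_n$ times jumps of $h_n$, are uniformly $o(1)$ precisely because the limiting direction $h$ is continuous --- is exactly where tangency to $\mathcal{C}[a,b]$ enters. The remaining ingredients (the sandwich bound, the mean value theorem, and the uniform-continuity bookkeeping, including that $F'$ is bounded below on the relevant compact) are routine.
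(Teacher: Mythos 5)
Your proof is correct. The paper does not actually prove this lemma—it only cites \cite{Vaart1998} (this is Lemma 21.3/21.4 there)—and your argument is precisely the standard proof of that cited result: the sandwich bound giving $G_n^{-1}\to F^{-1}$, the $o(t_n)$ control of the overshoot via continuity of the limiting direction $h$ (which is indeed exactly where tangency to $\mathcal{C}[a,b]$ enters), and the mean value step, correctly adapted to the compact-uniform topology on $\ell^{\infty}(0,1)$ used in the paper (the only cosmetic slip is writing $t_n\|h_n\|_\infty$ where $|t_n|\,\|h_n\|_\infty$ is meant, since $t_n$ may be negative).
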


\begin{corollary}\label{lemma:quantilebridge}
Let $(X_i)_{i\geq1}$ be i.i.d. real random variables. Let $F$ be the cumulative distribution function of $X_1$ and $\mathbb{F}_{n}$ be the empirical cdf of $\{X_1,\cdots,X_n\}$. Denote $F^{-1}$ and $\mathbb{F}^{-1}_n$ the corresponding quantile functions. Assume that $F$ satisfies Assumption~\ref{as:DerivativePositive} with derivative $f$. Then,
\begin{equation*}
\sqrt{n}\paren{\mathbb{F}^{-1}_n-F^{-1}}\overset{\Loi}{\rightarrow}\frac{\mathbb{U}}{f\circ F^{-1}}=({F^{-1}})' \U,
\end{equation*}
with $\mathbb{U}$ being a standard Brownian bridge. 
\end{corollary}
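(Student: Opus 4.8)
The plan is to derive this as a standard instance of ``Donsker's theorem $+$ functional delta method applied to the quantile map'', which is exactly the template used later in the paper (compare the derivation of \eqref{eq:DonskerQuantileCal}). First I would apply Theorem~\ref{lemma:Donsker} to the i.i.d.\ sample to obtain
\begin{equation*}
\sqrt{n}\paren{\mathbb{F}_n-F}\overset{\Loi}{\rightarrow}\mathbb{U}\circ F \text{ on } D[a,b],
\end{equation*}
with $\mathbb{U}$ a standard Brownian bridge. Here it is enough to work on $[a,b]$: by Assumption~\ref{as:DerivativePositive} both $F$ and $\mathbb{F}_n$ are constant ($0$ or $1$) off $[a,b]$, and the quantile functions take values in $[a,b]$. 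Since $F$ is continuous, the limit process $\mathbb{U}\circ F$ has sample paths in $\mathcal{C}[a,b]$ almost surely, which is the tangency set we will need.

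Next I would invoke Lemma~\ref{lemma:hadamardInverse}: under Assumption~\ref{as:DerivativePositive} the inverse map $G\mapsto G^{-1}$, taking values in $\ell^{\infty}(0,1)$, is Hadamard differentiable at $F$ tangentially to $\mathcal{C}[a,b]$, with derivative $H\mapsto -H\circ F^{-1}\,(F^{-1})'$. Because the Donsker limit is concentrated on $\mathcal{C}[a,b]$ and $\mathbb{F}_n$ lies in the domain of the map (it is the c.d.f.\ of a probability measure on $(a,b]$ with probability one, for $n$ large), the functional delta method \citep{Vaart1998} applies and gives
\begin{equation*}
\sqrt{n}\paren{\mathbb{F}_n^{-1}-F^{-1}}\overset{\Loi}{\rightarrow}-\paren{\mathbb{U}\circ F}\circ F^{-1}\,(F^{-1})' \text{ on } D(0,1).
\end{equation*}

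Finally I would simplify the right-hand side. Since $F$ is strictly increasing and continuous on $(a,b)$ we have $F\circ F^{-1}=I$ on $(0,1)$, so $(\mathbb{U}\circ F)\circ F^{-1}=\mathbb{U}$; moreover $(F^{-1})'=1/(f\circ F^{-1})$ because $f=F'$; and $-\mathbb{U}$ has the same law as $\mathbb{U}$, a standard Brownian bridge being symmetric. Chaining these identities yields $\sqrt{n}(\mathbb{F}_n^{-1}-F^{-1})\overset{\Loi}{\rightarrow}\mathbb{U}/(f\circ F^{-1})=(F^{-1})'\mathbb{U}$, as claimed. The only genuinely delicate point is the bookkeeping for the functional delta method: one must check that the Donsker limit almost surely lies in the tangency class $\mathcal{C}[a,b]$ of Lemma~\ref{lemma:hadamardInverse} and that $\mathbb{F}_n$ is eventually in the domain of the inverse map; both follow from continuity of $F$ and from the order statistics falling in $[a,b]$ almost surely. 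The rest is routine identification of the limit.
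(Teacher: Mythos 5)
Your proposal is correct and follows exactly the route the paper intends: Donsker's theorem (Theorem~\ref{lemma:Donsker}) to get $\sqrt{n}(\mathbb{F}_n-F)\to\mathbb{U}\circ F$, then the functional delta method with the Hadamard-differentiable inverse map of Lemma~\ref{lemma:hadamardInverse}, followed by the simplification $F\circ F^{-1}=I$ and the symmetry of the Brownian bridge to absorb the minus sign — this is the same chain the paper uses to derive \eqref{eq:DonskerQuantileCal} in the proof of Proposition~\ref{prop:BBCouple}. Your bookkeeping on the tangency class and the domain of the inverse map is also sound (in fact, $\mathbb{F}_n$ is almost surely a c.d.f.\ on $(a,b]$ for every $n$, not just eventually, since $F(a)=0$ and $F(b)=1$).
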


\begin{lemma}\label{lemma:HadamardWeight}
 Let $w$ a weight function satisfying Assumption~\ref{as:weight}. Denote $K:t\in[0,1]\rightarrow \int_{0}^t w(x)\dd x$. The map $T:F\in D[0,1]\rightarrow F/F(1)\in D[0,1]$ is Hadamard differentiable at $K$ tangentially to $\mathcal{C}[0,1]$ with the following formula:

\begin{equation*}
\dot{T}_K(H)(u)=\frac{H}{K(1)}-\frac{H(1)}{K(1)}W,
\end{equation*}
with $W=T(K)=K/K(1)$.

\end{lemma}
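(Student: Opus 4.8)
The plan is to verify Hadamard differentiability directly from the definition, since $T\colon F\mapsto F/F(1)$ is an elementary algebraic operation whose difference quotient can be put in closed form (alternatively, one could combine the chain rule for $x\mapsto 1/x$ at $x=K(1)$ with a product rule, but the direct route is self-contained). First I would record the basic facts about $K$: it is continuous and nondecreasing on $[0,1]$ with $K(0)=0$ and $K(1)=\int_0^1 w(x)\,\dd x>0$; strict positivity follows from Assumption~\ref{as:weight}, since under $\Pcal=\mathcal{U}(0,1)$ the function $\Wcal$ can only be increasing on a nonempty support if $w$ does not vanish Lebesgue-a.e. on $[0,1]$. In particular $T$ is well defined on the open set $\{F\in D[0,1]:F(1)\ne 0\}$, a neighbourhood of $K$.

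Next, fix a scalar sequence $t_n\to 0$ and functions $H_n\to H$ in $D[0,1]$ with $H\in\mathcal{C}[0,1]$, such that $K+t_nH_n$ lies in the domain of $T$; this holds automatically for $n$ large since $(K+t_nH_n)(1)=K(1)+t_nH_n(1)\to K(1)\ne 0$. Putting the two fractions over a common denominator, the factor $t_n$ cancels and I would obtain, for every $u\in[0,1]$,
\begin{align*}
\frac{T(K+t_nH_n)(u)-T(K)(u)}{t_n}
&=\frac1{t_n}\left(\frac{K(u)+t_nH_n(u)}{K(1)+t_nH_n(1)}-\frac{K(u)}{K(1)}\right)\\
&=\frac{H_n(u)K(1)-K(u)H_n(1)}{\bigl(K(1)+t_nH_n(1)\bigr)K(1)}.
\end{align*}

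The last step is to check that this converges, uniformly in $u$, to $\dfrac{H(u)K(1)-K(u)H(1)}{K(1)^2}=\dfrac{H(u)}{K(1)}-\dfrac{H(1)}{K(1)}W(u)$, which is the asserted derivative $\dot T_K(H)$. The numerator converges uniformly because $\|H_nK(1)-HK(1)\|_\infty\le K(1)\|H_n-H\|_\infty\to 0$ and $\|K(H_n(1)-H(1))\|_\infty\le\|K\|_\infty\,|H_n(1)-H(1)|\to 0$, while the denominator is a scalar tending to $K(1)^2>0$, hence bounded below by $K(1)^2/2$ for $n$ large; the ratio therefore converges uniformly on $[0,1]$. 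Finally I would note that $\dot T_K\colon H\mapsto H/K(1)-(H(1)/K(1))W$ is linear, maps $D[0,1]$ into $D[0,1]$, and is bounded, with $\|\dot T_K(H)\|_\infty\le 2\|H\|_\infty/K(1)$ since $\|W\|_\infty=1$ (as $W=K/K(1)$ increases from $0$ to $1$); this establishes continuity of the derivative and completes the proof. No serious obstacle arises here: the only points needing attention are the strict positivity $K(1)>0$ and the uniformity of the limit, both of which are immediate.
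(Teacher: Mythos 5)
Your proof is correct and follows essentially the same route as the paper's: put the two fractions over a common denominator, observe that the factor $t$ cancels exactly, and pass to the uniform limit using that the denominator tends to $K(1)^2>0$. Your additional remarks on the strict positivity of $K(1)$ and the linearity and boundedness of $\dot T_K$ are sound refinements of the same argument.
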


\begin{proof}[Proof of Lemma \ref{lemma:HadamardWeight}]
Let $K\in D[0,1]$ defined as in the statement. Let $(H_t)_{t\in)0,1]}$ be a family of function in $\mathcal{C}[0,1]$ with $H_t\rightarrow H\in\mathcal{C}[0,1]$ uniformly on $[0,1]$.
We have, for $t$ small  enough, 
\begin{align*}
T(K+tH_t)-T(K)&=\frac{K(1)\paren{K+tH_t}-\paren{K(1)+tH_t(1)}K}{\paren{K(1)+tH_t(1)}K(1)}\\
&=\frac{K(1)tH_t-tH_t(1)K}{\paren{K(1)+tH_t(1)}K(1)}\\
&=t\times\paren{\frac{H_t}{K(1)+tH_t(1)}-\frac{H_t(1)}{K(1)+tH_t(1)}\frac{K}{K(1)}}.
\end{align*}
Hence, we obtain
\begin{align*}
\frac{T(K+tH_t)-T(K)}{t}&\underset{t\rightarrow 0}{\longrightarrow}{\frac{H}{K(1)}-\frac{H(1)}{K(1)}\frac{K}{K(1)}},
\end{align*}
which prove the Hadamard differentiability.
\end{proof}

We gather below the formulas of the Hadamard derivatives of the functionals of interest when studying the asymptotic of the $\FDP$. They are obtained from \cite{Neu2008}. 

\begin{lemma}\label{lemma:hadamardNeuvial}
Let $G:[0,1]\rightarrow [0,1]$ be a continuously differentiable increasing strictly concave function. If $\alpha>[G'(0)]^{-1}$, then $\T^{\BH_\alpha}$ is Hadamard differentiable at $G$ tangentially to $\cC[0,1]$ with the following expression for all $H\in\cC[0,1]$
\begin{equation*}
(\dot \T^{\BH_\alpha})_G(H)=\frac{H\paren{\T^{\BH_\alpha}(G)}}{\alpha^{-1}-G'\paren{\T^{\BH_\alpha}(G)}}.
\end{equation*}

Let $(G_0,G_1)$ be two continuously differentiable c.d.f. functions from $[0,1]$ to $[0,1]$ and let $\pi\in(0,1)$ such that $G=\pi G_0+(1-\pi)G_1$ is stricly concave. Let $\alpha>[G'(0)]^{-1}$. In addition, let us define 
\begin{align}
\nu&:(F_0,F_1)\rightarrow F_0(\T^{\BH_\alpha}(\pi F_0+(1-\pi)F_1))\nonumber;\\ 
\Psi&:(F_0,F_1)\rightarrow \frac{\pi F_0(\T^{\BH_\alpha}(F))}{F(\T^{\BH_\alpha}(F))};\label{eq:functionalFDP}\\
\Phi&:(F_0,F_1)\rightarrow F_1(\T^{\BH_\alpha}(F)),\label{eq:functionalTDP}
\end{align}
where $F=\pi F_0+(1-\pi)F_1$.
Then, $\nu$, $\Psi$ and $\Phi$ are  Hadamard differentiable at $(G_0,G_1)$ tangentially to $(\cC[0,1])^2$ with the following derivative expressions: for all $(H_0,H_1)\in(\cC[0,1])^2$,
\begin{align*}
\dot \nu_{G_0,G_1}(H_0,H_1)&=G_0'\paren{\T^{\BH_\alpha}(G)}\frac{H\paren{\T^{\BH_\alpha}(G)}}{\alpha^{-1}-G'\paren{\T^{\BH_\alpha}(G)}}+H_0\paren{\T^{\BH_\alpha}(G)};\\
\dot \Psi_{G_0,G_1}(H_0,H_1)&=\frac{\pi}{G\paren{\T^{\BH_\alpha}(G)}}\paren{\dot \nu_{G_0,G_1}(H_0,H_1)-\frac{G_0\paren{\T^{\BH_\alpha}(G)}}{G\paren{\T^{\BH_\alpha}(G)}}\brac{H\paren{\T^{\BH_\alpha}(G)}+G'\paren{\T^{\BH_\alpha}(G)}(\dot \T^{\BH_\alpha})_G(H) }};\\
\dot \Phi_{G_0,G_1}(H_0,H_1)&=G_1'\paren{\T^{\BH_\alpha}(G)}\frac{H\paren{\T^{\BH_\alpha}(G)}}{\alpha^{-1}-G'\paren{\T^{\BH_\alpha}(G)}}+H_1\paren{\T^{\BH_\alpha}(G)},
\end{align*}
where $H=\pi H_0+(1-\pi)H_1$.
\end{lemma}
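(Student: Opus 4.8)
The plan is to isolate the one genuinely non-routine ingredient — Hadamard differentiability of the BH threshold functional $\T^{\BH_\alpha}$ — and then to read off the derivatives of $\nu$, $\Psi$ and $\Phi$ by the chain rule, since each of these three functionals is a composition of the linear mixing map $(F_0,F_1)\mapsto F:=\pi F_0+(1-\pi)F_1$, of $\T^{\BH_\alpha}$, of the evaluation map $(\phi,t)\mapsto\phi(t)$, and (for the quotient appearing in one of them) of elementary arithmetic. All three derivative formulas are exactly those of \cite{Neu2008}, so one may either reproduce the computation or simply cite it; below I sketch the computation.

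For $\T^{\BH_\alpha}$, write $t^\star:=\T^{\BH_\alpha}(G)$ and $g(s):=G(s)-s/\alpha$ on $[0,1]$. Since $G$ is strictly concave, $g$ is strictly concave with $g'(0)=G'(0)-\alpha^{-1}>0$ (from the hypothesis $\alpha>[G'(0)]^{-1}$) and $g(1)=G(1)-1/\alpha<0$ (BH level $\alpha<1$); hence $\{s\in[0,1]:g(s)\geq 0\}=[0,t^\star]$ with $g(t^\star)=0$ and, by strict concavity, $g'(t^\star)=G'(t^\star)-\alpha^{-1}<0$, i.e. the line $s\mapsto s/\alpha$ crosses $G$ transversally at $t^\star\in(0,1)$. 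Now take a perturbation $F_t:=G+t h_t$ with $h_t\to h$ uniformly and $h\in\cC[0,1]$. First I would show $\T^{\BH_\alpha}(F_t)\to t^\star$: on the complement of any fixed neighborhood of $t^\star$, $|g|$ is bounded below, so for $|t|$ small the sign of $F_t-\mathrm{id}/\alpha$ there agrees with that of $g$, which confines the last sign change to a shrinking neighborhood of $t^\star$. Second, on an interval $[t^\star-\delta,t^\star+\delta]$ on which $g$ is strictly decreasing through $0$, $\T^{\BH_\alpha}(F_t)$ is, for $|t|$ small, the unique root $\theta_t$ of $g(\theta_t)+t\,h_t(\theta_t)=0$; writing $g(\theta_t)=g'(t^\star)(\theta_t-t^\star)+o(|\theta_t-t^\star|)$ and using $h_t(\theta_t)\to h(t^\star)$ gives $(\theta_t-t^\star)/t\to -h(t^\star)/g'(t^\star)=h(t^\star)/(\alpha^{-1}-G'(t^\star))$, which is the asserted $(\dot\T^{\BH_\alpha})_G(h)$ and is visibly linear and continuous in $h$.

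For the second part, $(F_0,F_1)\mapsto F$ is linear and continuous, so by the chain rule $(F_0,F_1)\mapsto\T^{\BH_\alpha}(F)$ is Hadamard differentiable at $(G_0,G_1)$ with derivative $(H_0,H_1)\mapsto H(t^\star)/(\alpha^{-1}-G'(t^\star))$, $H:=\pi H_0+(1-\pi)H_1$. Next, the evaluation map $(\phi,t)\mapsto\phi(t)$ is Hadamard differentiable at $(G_j,t^\star)$ tangentially to $\cC[0,1]\times\R$, with derivative $(H_j,u)\mapsto H_j(t^\star)+G_j'(t^\star)u$, from the expansion $(G_j+sH_{j,s})(t^\star+su_s)-G_j(t^\star)=sG_j'(t^\star)u_s+sH_{j,s}(t^\star+su_s)+o(s)$, which uses that $G_j$ is continuously differentiable and that $t^\star$ is interior. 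Composing these two displays yields $\dot\nu$. Applying the same building blocks to the denominator $(F_0,F_1)\mapsto F(\T^{\BH_\alpha}(F))$ — which equals $\T^{\BH_\alpha}(F)/\alpha$ at the crossing, hence has derivative $\alpha^{-1}(\dot\T^{\BH_\alpha})_G(H)$ — and then the product and quotient rules (\cite{Vaart1998}), one obtains $\dot\Psi$ and $\dot\Phi$ in the stated forms, with the $\T^{\BH_\alpha}$-contribution gathered into the $(\dot\T^{\BH_\alpha})_G(H)$ term.

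I expect the main obstacle to be the differentiability of $\T^{\BH_\alpha}$. Two points need care: the $\sup$ in $\T^{\BH_\alpha}$ could in principle be realized by a sign change far from $t^\star$, and ruling this out is exactly where strict concavity (equivalently the uniform lower bound on $|g|$ away from $t^\star$, together with $g(1)<0$) is used; and since $h_t$ is only assumed to converge uniformly to a continuous function, no derivative of $h_t$ is available near $t^\star$, so the implicitly defined root $\theta_t$ must be localized and expanded using the monotonicity of $g$ (not of $F_t$), with the transversality $g'(t^\star)\neq 0$ making $\theta_t$ depend smoothly to first order on the perturbation. Everything after this is the standard Hadamard-derivative calculus of \cite{Vaart1998}, and could alternatively be imported from \cite{Neu2008} verbatim.
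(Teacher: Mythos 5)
Your proposal is correct and is essentially the argument that the paper delegates to \cite{Neu2008} by citation: localization of $\T^{\BH_\alpha}$ near the unique transversal crossing $t^\star$ using strict concavity and $g'(t^\star)=G'(t^\star)-\alpha^{-1}<0$, a first-order expansion of the implicitly defined root (with the preliminary bound $\theta_t-t^\star=O(t)$ coming from $|g(\theta)|\gtrsim|\theta-t^\star|$), and then the chain and quotient rules for the composed functionals. One cosmetic remark: in the lemma as stated the displayed formulas for $\dot \Psi_{G_0,G_1}$ and $\dot \Phi_{G_0,G_1}$ appear to have their labels interchanged relative to the definitions \eqref{eq:functionalFDP}--\eqref{eq:functionalTDP} (the first is the derivative of the evaluation functional $(F_0,F_1)\mapsto F_1(\T^{\BH_\alpha}(F))$, the second is the quotient-rule expression for the FDP functional), which your derivation reproduces correctly but does not flag.
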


\subsection{Random change of time}

We present here a version of the random change of time lemma of \cite{Bill1999} (page 151), which is adapted to the topological spaces $D(\R)$, $\ell^{\infty}(0,1)$ and $D(0,1)$.
\begin{lemma}\label{cor:JointComposition}
Let $(U_n,V_n)_n$ be a sequence of random processes in $D(\R)\times D(0,1)$, $\U$ and $\V$ two random processes in $D(\R)$ and $D(0,1)$, respectively, which are both a.s. continuous and such that $(U_n,V_n)\overset{\Loi}{\rightarrow}(\U,\V)$ in $D(\R)\times D(0,1)$. Let $(F_n)_n$ be a sequence of random processes in $D(0,1)$ and $F\in D(0,1)$ such that $(F_n)_n$ converges in probability to $F$ on $\ell^{\infty}(0,1)$. Assume that for all $n\in\N$, $U_n\circ F_n\in D(0,1)$. Then,
\begin{equation*}
(U_n\circ F_n,V_n)\overset{\Loi}{\rightarrow}(\U\circ F,\V)\text{ on $(D(0,1))^2$}.
\end{equation*} 
\end{lemma}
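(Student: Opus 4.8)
The plan is to deduce the statement from the continuous mapping theorem applied to the composition map $(u,v,f)\mapsto(u\circ f,v)$, once the two hypotheses are merged into a single joint convergence.

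First I would upgrade the inputs to a joint weak convergence. The limit $F$ is a fixed (deterministic) element of $D(0,1)$ and $F_n\to F$ in probability on $\ell^\infty(0,1)$, while $(U_n,V_n)\overset{\Loi}{\rightarrow}(\U,\V)$ on $D(\R)\times D(0,1)$; hence, by the standard consequence of Slutsky's lemma that convergence in distribution together with convergence in probability to a constant yields joint convergence in distribution (see e.g.\ \citealp{VW1996}),
\[
(U_n,V_n,F_n)\overset{\Loi}{\rightarrow}(\U,\V,F)\qquad\text{on } D(\R)\times D(0,1)\times \ell^\infty(0,1).
\]
Let $\mathbb{D}$ be the set of triples $(u,v,f)$ with $u\in D(\R)$, $v\in D(0,1)$, $f\in\ell^\infty(0,1)$ and $u\circ f\in D(0,1)$, and define $\Theta(u,v,f)\coloneqq(u\circ f,v)$ on $\mathbb{D}$, valued in $\left(D(0,1)\right)^2$. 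By hypothesis $(U_n,V_n,F_n)\in\mathbb{D}$ almost surely; moreover $\U$ is a.s.\ continuous and $F\in D(0,1)$, so, since the composition of a continuous function with a \cadlag function is \cadlag, we have $\U\circ F\in D(0,1)$ and $(\U,\V,F)\in\mathbb{D}$ almost surely.

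The core of the proof is to show that $\Theta$ is continuous at every $(u,v,f)\in\mathbb{D}$ for which $u$ is continuous on $\R$, since the limit almost surely lies in that set; the conclusion then follows from the continuous mapping theorem. So let $(u_k,v_k,f_k)\in\mathbb{D}$ with $u_k\to u$ in $D(\R)$, $v_k\to v$ in $D(0,1)$, $f_k\to f$ in $\ell^\infty(0,1)$ and $u$ continuous. The second coordinate causes no difficulty, so it remains to prove $u_k\circ f_k\to u\circ f$ in $D(0,1)$. Fix a compact interval $[a,b]\subset(0,1)$. Since $f$ is locally bounded, $f([a,b])\subset[-M,M]$ for some $M$; setting $K\coloneqq M+1$, the uniform convergence of $f_k$ to $f$ on $[a,b]$ gives $f_k([a,b])\subset[-K,K]$ for all large $k$. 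Because $u$ is continuous, convergence of $u_k$ to $u$ in $D(\R)$ is just local uniform convergence, so $\sup_{|x|\le K}|u_k(x)-u(x)|\to0$; combining this with the uniform continuity of $u$ on $[-K,K]$ and $\sup_{t\in[a,b]}|f_k(t)-f(t)|\to0$ gives
\[
\sup_{t\in[a,b]}\bigl|u_k(f_k(t))-u(f(t))\bigr|\ \le\ \sup_{|x|\le K}|u_k(x)-u(x)|\ +\ \sup_{t\in[a,b]}\bigl|u(f_k(t))-u(f(t))\bigr|\ \longrightarrow\ 0 .
\]
Thus $u_k\circ f_k\to u\circ f$ in $\ell^\infty(0,1)$; since on any $[a,b]\subset(0,1)$ the Skorokhod distance is dominated by the uniform distance, this also forces $u_k\circ f_k\to u\circ f$ in $D[a,b]$ for every such $a,b$, hence in $D(0,1)$, so $\Theta(u_k,v_k,f_k)\to\Theta(u,v,f)$ in $\left(D(0,1)\right)^2$.

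Putting this together, the continuous mapping theorem gives $\Theta(U_n,V_n,F_n)=(U_n\circ F_n,V_n)\overset{\Loi}{\rightarrow}(\U\circ F,\V)$ on $\left(D(0,1)\right)^2$, which is the claim. The only genuinely delicate point is the continuity of $\Theta$: the mechanism is that composing on the outside with a \emph{continuous} function turns the merely Skorokhod-type convergence $u_k\to u$ into honest local uniform convergence, and this is exactly what allows the ill-behaved (Skorokhod-only, possibly discontinuous-limit) convergence of the inner time changes $f_k$ to pass through the composition; if $u$ were allowed to be only \cadlag, the argument would break down.
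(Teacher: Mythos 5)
Your proof is correct and follows essentially the same route as the paper: Slutsky's lemma to obtain the joint convergence of $(U_n,V_n,F_n)$, then the continuous mapping theorem applied to $(u,v,f)\mapsto(u\circ f,v)$, whose continuity at continuous $u$ you establish by exactly the compactness/uniform-continuity/triangle-inequality argument that the paper isolates as Lemma~\ref{lemma:ContinuityCompos}. The only (immaterial) difference is that you inline that continuity argument and work on the $D$-spaces directly, while the paper first passes to the $\ell^\infty$ topologies via Lemma~\ref{lemma:equivalenceCv} and upgrades back to $D(0,1)$ at the end.
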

\begin{proof}[Proof of Lemma \ref{cor:JointComposition}]
Since $\U$ and $\V$ are continuous, $(U_n,V_n)_n$ converges in distribution to $(\U,\V)$ on $\ell^{\infty}(\R)\times\ell^{\infty}(0,1)$ by Lemma~\ref{lemma:equivalenceCv}.
Hence, by Slutsky's lemma, the sequence $(U_n,V_n,F_n)_n$ converges in distribution to $(\U,\V,F)$ on $\ell^{\infty}(\R)\times(\ell^{\infty}(0,1))^2$. Hence by Lemma~\ref{lemma:ContinuityCompos} and the continuous mapping theorem we obtain,
\begin{equation*}
(U_n\circ F_n,\V)\overset{\Loi}{\rightarrow}\U\circ F\text{ on $\ell^{\infty}(0,1)$}.
\end{equation*}
Since $\U$ is continuous, then $\U\circ F$ is in $D(0,1)$ and the previous convergence implies the convergence in $(D(0,1))^2$.
\end{proof}

Note that when using Lemma~\ref{cor:JointComposition} in our work, the convergence of the sequence $(U_n)_n$ is typically given by the Donsker Theorem, while the convergence of $(F_n)_n$ is given by the Glivenko-Cantelli theorem.
\begin{lemma}\label{lemma:ContinuityCompos}
Define the following map $\Psi:(F_1,F_2,F_3)\in \ell^{\infty}(\R)\times (\ell^{\infty}(0,1))^2\mapsto (F_1\circ F_3,F_2)\in (\ell^{\infty}(0,1))^2$. For all $U\in\mathcal{C}(\R)$ and $(F,V)\in (\ell^{\infty}(0,1))^2$, $\Psi$ is continuous at $(U,V,F)$.
\end{lemma}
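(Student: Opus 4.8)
Since convergence in $\ell^\infty(0,1)$ means uniform convergence on every compact subset of $(0,1)$, it suffices to fix an arbitrary compact $K\subset(0,1)$ and show that, whenever $(U_n,V_n,F_n)\to(U,V,F)$ in $\ell^\infty(\R)\times(\ell^\infty(0,1))^2$ with $U\in\mathcal C(\R)$, one has $\sup_{t\in K}|U_n(F_n(t))-U(F(t))|\to 0$ and $\sup_{t\in K}|V_n(t)-V(t)|\to 0$. The second statement is nothing but the assumed convergence $V_n\to V$ in $\ell^\infty(0,1)$, so the whole content is in the first coordinate. I would also check, as a preliminary, that $U\circ F$ indeed belongs to $\ell^\infty(0,1)$: since $F$ is locally bounded, $F(K)$ is a bounded subset of $\R$, and $U$ is continuous, hence bounded on the closure of $F(K)$, so $U\circ F$ is locally bounded.

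The plan for the first coordinate is a two-term triangle-inequality split:
\[
\sup_{t\in K}\bigl|U_n(F_n(t))-U(F(t))\bigr|\le \sup_{t\in K}\bigl|U_n(F_n(t))-U(F_n(t))\bigr|+\sup_{t\in K}\bigl|U(F_n(t))-U(F(t))\bigr|.
\]
For both terms I first localize: because $F$ is locally bounded there is $M>0$ with $F(K)\subset[-M,M]$, and because $F_n\to F$ uniformly on $K$ we have $F_n(K)\subset J:=[-M-1,M+1]$ for all $n$ large enough. The first term is then bounded by $\sup_{x\in J}|U_n(x)-U(x)|$, which tends to $0$ since $U_n\to U$ uniformly on the compact set $J\subset\R$. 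For the second term, fix $\varepsilon>0$; $U$ is uniformly continuous on the compact interval $J$, so there is $\delta>0$ with $|U(x)-U(y)|<\varepsilon$ whenever $x,y\in J$ and $|x-y|<\delta$. Since $\|F_n-F\|_K\to 0$, for $n$ large we have $\sup_{t\in K}|F_n(t)-F(t)|<\delta$ with $F_n(t),F(t)\in J$, hence $\sup_{t\in K}|U(F_n(t))-U(F(t))|\le\varepsilon$. Letting $\varepsilon\downarrow 0$ gives the claim, and combining the two bounds yields $\sup_{t\in K}|U_n(F_n(t))-U(F(t))|\to 0$. Since $K$ was an arbitrary compact subset of $(0,1)$, this proves continuity of $\Psi$ at $(U,V,F)$.

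I do not expect a genuine obstacle here; the only point requiring care is that $U$ is merely continuous on $\R$ (not uniformly) and that $F,F_n$ are only locally bounded, so one must not argue globally. The remedy is exactly the localization step above: restrict everything to the fixed compact interval $J$ determined by $K$, on which $U$ is uniformly continuous and on which the uniform convergence $U_n\to U$ is available. An alternative, essentially equivalent route would be to invoke directly the random-change-of-time lemma of \cite{Bill1999} in its metric-space form; I prefer the elementary $\varepsilon$--$\delta$ argument since the topologies here ($\ell^\infty$ with uniform convergence on compacts) are tailored to it.
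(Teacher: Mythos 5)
Your proof is correct and follows essentially the same route as the paper's: the same triangle-inequality split of $U_n\circ F_n-U\circ F$, the same localization of $\bigcup_n F_n(K)\cup F(K)$ into a compact interval of $\R$, and the same use of uniform continuity of $U$ on that interval. The preliminary check that $U\circ F\in\ell^{\infty}(0,1)$ is a harmless addition not spelled out in the paper.
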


\begin{proof}[Proof of Lemma \ref{lemma:ContinuityCompos}]
Let $U\in\mathcal{C}(\R)$ and $(V,F)\in (\ell^{\infty}(0,1))^2$. Let $(U_n,V_n,F_n)_n$ a sequence in $\ell^{\infty}(\R)\times(\ell^{\infty}(0,1))^2$ which converges to $(U,V,F)$.
Let $K\subset(0,1)$ be a compact set. Let $\varepsilon>0$. The set $\set{F_n(x),x\in K,n\in\N}$ is  bounded since $\norm{F_n-F}_{\infty,K}\rightarrow 0$ and $(F_n)_n$ and $F$ belongs in $\ell^{\infty}(0,1)$, hence is included in a compact set $K'$ of $\R$. Since $U$ is continuous, we have $\eta>0$ such that for all $(x,y)\in {K'}^{2}$, $\abs{x-y}\leq\eta$ implies that $\abs{U(x)-U(y)}\leq\varepsilon$. Let $N>0$ such that for all $n\geq N$, $\norm{U_n-U}_{\infty,K'}\leq\varepsilon$ and $\norm{F_n-F}_{\infty,K}\leq\eta$. Then for all $n\geq N$,
\begin{align*}
\norm{U_n\circ F_n-U\circ F}_{\infty,K}&\leq\norm{U_n\circ F_n-U\circ F_n}_{\infty,K}+\norm{U\circ F_n-U\circ F}_{\infty,K}\\
&\leq\norm{U_n-U}_{\infty,K'}+\varepsilon\\
&\leq 2\varepsilon,
\end{align*}
where the first $\varepsilon$ appears since for all $x\in K$, $F_n(x)\in K'$, $F(x)\in K'$ and $\abs{F_n(x)-F(x)}\leq \eta$. Furthermore for all $\tilde{K}$ a compact subset of $(0,1)$, $||V_n-V||_{\infty,\tilde{K}}\rightarrow 0$. Hence, $\Psi$ is continuous at $(U,V,F)$.
\end{proof}

Finally, the following result is classical and shows how a convergence in distribution on $D[0,1]$ can imply a uniform convergence \citep{Bill1999}, with the local topology defined in Section~\ref{sec:Topology}.
\begin{lemma}[\cite{Bill1999}]\label{lemma:equivalenceCv}
Let $(X_n)_n$ be  a sequence of random processes on $D(0,1)$. Let $X\in D(0,1)$ be a continuous function a.s.. Then, $(X_n)_n$ converges in distribution to $X$ on $D(0,1)$ if and only if $(X_n)_n$ converges in distribution to $X$ on $\ell^{\infty}(0,1)$
\end{lemma}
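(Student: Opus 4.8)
The plan is to reduce the asserted equivalence to the classical comparison, on a fixed compact interval, between the Skorohod $J_1$ topology and the topology of uniform convergence, exploiting crucially that the limit $X$ is almost surely continuous and that both topologies in play are defined through restriction to compact subintervals of $(0,1)$.

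First I would unpack the two modes of convergence. Since $X$ is a.s.\ continuous on $(0,1)$, the clause ``$x$ continuous at $a$ and $b$'' in the definition of the extended Skorohod topology (Section~\ref{sec:Topology}) is a.s.\ satisfied for every $[a,b]\subset(0,1)$; hence, using the $D[0,+\infty)$-type machinery alluded to in Section~\ref{sec:Topology}, $X_n\overset{\Loi}{\rightarrow}X$ on $D(0,1)$ is equivalent to: for every $[a,b]\subset(0,1)$, the restrictions $X_n|_{[a,b]}$ converge in distribution to $X|_{[a,b]}$ in $D[a,b]$ with the usual $J_1$ topology. Likewise, as every compact subset of $(0,1)$ is contained in some such $[a,b]$, $X_n\overset{\Loi}{\rightarrow}X$ on $\ell^{\infty}(0,1)$ is equivalent to: for every $[a,b]\subset(0,1)$, $X_n|_{[a,b]}\overset{\Loi}{\rightarrow}X|_{[a,b]}$ in $D[a,b]$ equipped with the uniform norm $\|\cdot\|_{\infty,[a,b]}$. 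It therefore suffices to establish the equivalence on a fixed interval $[a,b]\subset(0,1)$.

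Next I would treat the two implications on $[a,b]$ via the continuous mapping theorem applied to the identity map between the two topologies. For the direction ``uniform $\Rightarrow$ $J_1$'', the identity $(D[a,b],\|\cdot\|_{\infty})\to(D[a,b],J_1)$ is continuous everywhere, since the Skorohod metric is dominated by the uniform norm (the identity time-change is admissible); the continuous mapping theorem then yields the conclusion. For ``$J_1\Rightarrow$ uniform'', the key input is the classical fact \citep[][Section~12]{Bill1999}: if a sequence in $D[a,b]$ converges in $J_1$ to a limit lying in $\mathcal{C}[a,b]$, then the convergence is uniform; equivalently, the identity $(D[a,b],J_1)\to(D[a,b],\|\cdot\|_{\infty})$ is continuous at every continuous function. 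Because $X|_{[a,b]}\in\mathcal{C}[a,b]$ a.s., the law of $X|_{[a,b]}$ puts full mass on the continuity set of this map, so the extended continuous mapping theorem upgrades $J_1$ convergence in distribution to uniform convergence in distribution.

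The one delicate point — rather than a routine verification — is measurability: the Borel $\sigma$-field of $(D[a,b],\|\cdot\|_{\infty})$ is strictly finer than that of $(D[a,b],J_1)$, since the uniform topology on $D[a,b]$ is non-separable, so the plain continuous mapping theorem does not apply verbatim in the second direction. I would resolve this by reading ``convergence in distribution on $\ell^{\infty}$'' in the Hoffmann--J\o rgensen / van der Vaart--Wellner sense \citep{VW1996}, observing that the limit $X|_{[a,b]}$ is tight and takes values in the separable subspace $\mathcal{C}[a,b]$; in that framework the continuous mapping theorem holds for maps continuous on a set containing the support of the limit, which is exactly the present situation. Assembling the two implications over all $[a,b]\subset(0,1)$ then yields the stated equivalence.
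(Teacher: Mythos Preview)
The paper does not actually prove this lemma: it is stated as a classical fact attributed to \cite{Bill1999} with no argument given. Your proposal is a correct and self-contained justification, properly adapted to the local topologies of Section~\ref{sec:Topology}; the reduction to fixed compact intervals $[a,b]\subset(0,1)$, the two applications of the continuous mapping theorem (using that the identity $(D[a,b],J_1)\to(D[a,b],\|\cdot\|_\infty)$ is continuous at every point of $\mathcal{C}[a,b]$), and your handling of the measurability issue via the Hoffmann--J\o rgensen framework are all sound, so you have supplied more than the paper itself does.
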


\section{Useful Lemmas}

\begin{lemma}\label{lemma:QuantileWeight}
Let $\paren{s_1,\cdots,s_{n+1}}\in{\R\cup\set{+\infty}}$, $(w_k)_{k\in\range{n+1}}$ be a family of positive weight summing to $1$ and $\mu=\sum_{k\in\range{n+1}}w_k\delta_{s_k}$ be a probability measure on $\R\cup\set{+\infty}$. Define for all $\alpha\in(0,1)$, $Q_\alpha(\mu)=\inf\set{x\in\R:\mu\paren{[-\infty,x]}\geq \alpha}$ the $\alpha$-quantile of the probability measure $\mu$.
Then, for all $t\in\R$ and $\alpha\in(0,1)$, the two following assertions are equivalent:
\begin{itemize}
\item[(i)] $\sum_{k\in\range{n+1}} w_k\1{s_k\geq t}\leq\alpha$
\item[(ii)] $ t>Q_{1-\alpha}(\mu)$
\end{itemize}
\end{lemma}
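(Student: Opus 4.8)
The plan is to reduce both assertions to a single inequality for the left-hand limit of the cumulative distribution function of $\mu$. Set $F(x) := \mu([-\infty,x]) = \sum_{k\in\range{n+1}} w_k\1{s_k\leq x}$ for $x\in\R$, so that $Q_{1-\alpha}(\mu) = \inf\{x\in\R : F(x)\geq 1-\alpha\}$ by definition (with the convention $\inf\emptyset=+\infty$). Using $\1{s_k\geq t} = 1-\1{s_k< t}$ together with $\sum_{k\in\range{n+1}} w_k = 1$, assertion (i) is equivalent to $F(t^-)\geq 1-\alpha$, where $F(t^-) := \sum_{k\in\range{n+1}} w_k\1{s_k< t} = \lim_{x\uparrow t}F(x)$. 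Hence it suffices to prove the equivalence $F(t^-)\geq 1-\alpha \iff t > Q_{1-\alpha}(\mu)$. I would also record the elementary fact that $F$ is non-decreasing, right-continuous (a finite sum of right-continuous indicators), and tends to $0$ as $x\to-\infty$; consequently, whenever the set $A := \{x\in\R : F(x)\geq 1-\alpha\}$ is nonempty, its infimum $Q_{1-\alpha}(\mu)$ is a real number lying in $A$, i.e. $F(Q_{1-\alpha}(\mu))\geq 1-\alpha$.

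For the direction (ii) $\Rightarrow$ (i): if $t > Q_{1-\alpha}(\mu)$, then $Q_{1-\alpha}(\mu)$ is finite, so $A\neq\emptyset$ and $q := Q_{1-\alpha}(\mu)\in A$; since $q < t$, every index $k$ with $s_k\leq q$ also satisfies $s_k< t$, whence $F(t^-) = \sum_k w_k\1{s_k<t}\geq \sum_k w_k\1{s_k\leq q} = F(q)\geq 1-\alpha$, which is (i). For the direction (i) $\Rightarrow$ (ii): assume $F(t^-)\geq 1-\alpha>0$. Because $\mu$ is supported on the finite set $\{s_1,\dots,s_{n+1}\}$, the index set $\{k : s_k<t\}$ is nonempty (otherwise $F(t^-)=0$), so $x^* := \max\{s_k : s_k<t\}$ is a well-defined real number with $x^*<t$, and the indices $k$ with $s_k<t$ are exactly those with $s_k\leq x^*$; hence $F(x^*) = F(t^-)\geq 1-\alpha$, so $x^*\in A$ and therefore $Q_{1-\alpha}(\mu)\leq x^*<t$, which is (ii).

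There is no genuine difficulty in this argument; the one point that requires care is the treatment of the left limit $F(t^-)$. One must exploit the finiteness of the support of $\mu$ to see that $F(t^-)$ is attained as $F(x^*)$, with $x^*$ the largest atom strictly below $t$ — this is precisely what makes the direction (i) $\Rightarrow$ (ii) go through — and one must keep the convention $\inf\emptyset=+\infty$ in view, which handles the degenerate case $\sum_{k:\,s_k=+\infty} w_k>\alpha$: there $A=\emptyset$ and $Q_{1-\alpha}(\mu)=+\infty$, so (ii) fails for every real $t$, while (i) also fails since $\sum_k w_k\1{s_k\geq t}\geq\sum_{k:\,s_k=+\infty} w_k>\alpha$.
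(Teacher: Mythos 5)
Your proof is correct and follows essentially the same route as the paper's: reduce (i) to $\sum_k w_k\1{s_k<t}\geq 1-\alpha$, use the largest atom strictly below $t$ for (i)$\Rightarrow$(ii), and use the fact that the infimum defining the quantile is attained for (ii)$\Rightarrow$(i). Your explicit treatment of the left limit $F(t^-)$ and of the degenerate case $A=\emptyset$ is a slightly more careful write-up of the same argument, not a different approach.
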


\begin{proof}[Proof of Lemma \ref{lemma:QuantileWeight}]
First note that, since the weights are summing to $1$,  (i) is equivalent to $ \sum_{k\in\range{n+1}} w_k\1{s_k< t}\geq 1- \alpha$.
Now prove that (ii) implies (i). If $t>Q_{1-\alpha}(\mu)$, then $\sum_{k\in\range{n+1}} w_k\1{s_k< t}\geq \sum_{k\in\range{n+1}} w_k\1{s_k\leq Q_{1-\alpha}(\mu) }\geq 1- \alpha$ and thus (i) holds.

Conversely, if (i) holds, we have $\sum_{k\in\range{n+1}} w_k\1{s_k< t}\geq 1-\alpha>0$ and thus $$S\coloneqq\max_{k\in\range{n+1}}\set{s_k: s_k<t}\in \R$$ exists. 
With such a $S<t$ we have $\sum_{k\in\range{n+1}} w_k\1{s_k\leq S}=\sum_{k\in\range{n+1}} w_k\1{s_k< t}\geq 1-\alpha$. 
This proves $S\geq Q_{1-\alpha}(\mu)$ by definition of the quantile function. Hence $t> Q_{1-\alpha}(\mu)$ and we have proved (ii).

\end{proof}

\begin{lemma}\label{lemma:taunmAsymptotic}
$\frac{\tau_{n,m}}{m}\rightarrow\sigma^2\in[0,1]$ if and only if $\frac{m}{n}\rightarrow \sigma^{-2}-1\in[0,+\infty]$.
\end{lemma}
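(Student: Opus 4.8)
The plan is to reduce the statement to a single elementary identity together with the fact that a homeomorphism between compact intervals both preserves and reflects limits. First I would use the definition \eqref{eq:taunm} of $\tau_{n,m}$ to write, for all integers $n,m\geq 1$,
\[
\frac{\tau_{n,m}}{m} = \frac{n}{n+m} = \frac{1}{1+m/n}.
\]
I would then set $r_{n,m} := m/n \in (0,+\infty) \subset [0,+\infty]$ and introduce the map $h\colon [0,+\infty]\to[0,1]$ defined by $h(r) = 1/(1+r)$, with the conventions $h(0)=1$ and $h(+\infty)=0$, so that the displayed identity becomes simply $\tau_{n,m}/m = h(r_{n,m})$.

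Next I would record the (routine) fact that $h$ is a homeomorphism from $[0,+\infty]$, equipped with its usual order topology (the two-point compactification of $(0,\infty)$), onto $[0,1]$: it is continuous, strictly decreasing and surjective, hence a bijection, and its inverse $h^{-1}\colon[0,1]\to[0,+\infty]$, given by $h^{-1}(s) = s^{-1}-1$ with $h^{-1}(0)=+\infty$ and $h^{-1}(1)=0$, is likewise continuous. In particular $h^{-1}(\sigma^2) = \sigma^{-2}-1$ for every $\sigma^2\in[0,1]$.

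Finally I would conclude in both directions. If $r_{n,m}\to\sigma^{-2}-1$ in $[0,+\infty]$, then by continuity of $h$ we get $\tau_{n,m}/m = h(r_{n,m}) \to h(\sigma^{-2}-1) = h\bigl(h^{-1}(\sigma^2)\bigr) = \sigma^2$. Conversely, if $\tau_{n,m}/m\to\sigma^2$ in $[0,1]$, then by continuity of $h^{-1}$ we get $r_{n,m} = h^{-1}(\tau_{n,m}/m) \to h^{-1}(\sigma^2) = \sigma^{-2}-1$. This is exactly the claimed equivalence. There is essentially no obstacle here; the only point requiring a little care is that the two boundary cases $\sigma^2\in\{0,1\}$ (equivalently $m/n\to+\infty$ and $m/n\to 0$) are included, which is precisely why one works in the compactified intervals $[0,+\infty]$ and $[0,1]$ and invokes the continuity of $h$ and $h^{-1}$ at their endpoints.
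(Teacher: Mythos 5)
Your proof is correct: the identity $\tau_{n,m}/m=n/(n+m)=1/(1+m/n)$ together with the fact that $r\mapsto 1/(1+r)$ is a homeomorphism between the compactified intervals $[0,+\infty]$ and $[0,1]$ gives exactly the claimed equivalence, including the boundary cases $\sigma^2\in\{0,1\}$. The paper states this lemma without proof, and your argument is the natural (and essentially only) one.
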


\section{{Relaxing assumptions}}\label{sec:WeakCPlimit}

{In this section, we show that some of our results can be stated under weaker distributional assumptions. }{More precisely, we aim to transfer the assumptions on the marginal distributions of the scores (e.g. Assumption~\ref{as:strictCroissance}) to assumptions on the marginal distribution of the theoretical $p$-value $(\p{i}{+\infty})_{i\in\range{m}}$ defined in \eqref{eq:theopvalue}.}

{For this, let $S$ be a random variable with c.d.f. $\Fcal$. We denote $\Fcalcag$ its left continuous version, i.e. $\Fcalcag(t)=\P(S<t)$ for all $t\in\R$. We denote its general inverse $\Fcalcag^{-1}$ as follows:
\begin{align*}
\Fcalcag^{-1}(u)&=\sup\set{t\in\R\telque \Fcalcag(t)\leq u},\ u\in(0,1),
\end{align*}
with the convention that $\sup{\emptyset}=-\infty$ (see Lemma 13 in the supplementary material of \cite{fromont2016family} for an extensive study of $\Fcalcag^{-1}$). Let us introduce
\begin{align}
{\Ftestcalcag}(t)&=\Ftest\circ \Fcalcag^{-1}(t) , \:t \in [0,1]\label{eq:Compositioncdfcag};
\end{align}    
with the convention that $\Ftestcalcag(1)=1$ and that $\Ftest(-\infty)=0$. }

Assuming that the calibration scores $(S_k)_{k\geq 1}$ and test scores $(T_i)_{i\in\range{n}}$ are independent and that the calibration scores are i.i.d. with c.d.f. $\Fcal$ and the test scores are i.i.d. with c.d.f. $\Ftest$, then for all $i\in\N$:
\begin{equation}\label{eq:theopvalueEquality}
\p{i}{+\infty}=1-\Fcalcag(T_i).
\end{equation}
Since $\Fcalcag(T_i)$ has for c.d.f. $\Ftestcalcag$ on $[0,1]$, using the latter gives us that:
\begin{equation}\label{eq:Weakaltcdf}
\P(\p{i}{+\infty}<t)=1-\Ftestcalcag(1-t)\eqqcolon\widetilde{G}(t),\ \forall t\in(0,1)
\end{equation}  
hence making assumptions on $\widetilde{G}$~\eqref{eq:Weakaltcdf} or equivalently on $\Ftestcalcag$~\eqref{eq:Compositioncdfcag} consists on making assumptions on the marginal distribution of the theoretical $p$-value.

\subsection{{Results without weight}}

Assumptions~\ref{as:CP} and \ref{as:strictCroissance} can be relaxed as follows.
\begin{assumption}\label{as:WeakcvAlter}
The set of calibration scores $\paren{S_k}_{k\geq 1}$ and the set of test scores  $\paren{T_i}_{i\geq 1}$ are two independent families of real random variables. The $S_k, k\geq 1,$ (resp. $T_i, i\geq 1$) are i.i.d. with distribution $\Pcal$ (resp. $\Ptest$) and cumulative distribution function  $\Fcal$ (resp. $\Ftest$). The function $\widetilde{G}$ \eqref{eq:Weakaltcdf} is continuously differentiable on  $(0,1)$.
\end{assumption}

Assumption~\ref{as:WeakcvAlter} is an assumption on the data structure (as in the  beginning of Assumption~\ref{as:CP} and most importantly on the distribution of the theoretical $p$-value and not directly on the distribution of the scores as in the last part of Assumption~\ref{as:CP} and Assumption~\ref{as:strictCroissance}.  
Example~\ref{ex:Weakinterest} shows that Assumption~\ref{as:WeakcvAlter} is strictly weaker than Assumption~\ref{as:strictCroissance}.
 \begin{example}\label{ex:Weakinterest}
Assume that $\Pcal$ is the uniform distribution on $(0,1)\cup(2,3)$ and $\Ptest$ has for density $f_1(x)\1{x\in(0,1)}+f_2(x-2)\1{x\in(2,3)}$ with $f_1$, $f_2$ two continuous function from $[0,1]$ to $\R^+$ with $f_1(1)=f_2(0)$. We get for all $t\in[0,3]$ that $\Fcal(t)=1/2\paren{t\1{t\in(0,1)}+\1{t\in[1,3]}+ (t-2)\1{t\in(2,3]}}$ which is continuous nondecreasing and constant on $(1,2)$, while for all $u\in(0,1)$, $\Fcal^{-1}(u)=2u\1{u\in(0,1/2]}+(1+2u)\1{u\in(1/2,1)}$ which has a discontinuity point at $1/2$. Denoting $F_1(t)=\int_0^t f_1\dd\lambda$ and $F_2(t)=\int_0^t f_2\dd\lambda$ for all $t\in[0,1]$ we get that
\begin{equation*}
\Ftestcalcag(u)=F_1(2u)\1{u\in(0,1/2)}+\paren{F_1(1)+F_2(2u-1)}\1{u\in[1/2,1)},\ \forall u\in(0,1),
\end{equation*}
which is continuously differentiable on $(0,1)$ with derivative equals to $2f_1(2u)$ for all $u\in(0,1/2]$ and to $2f_2(2u-1)$ for all $u\in[1/2,1)$ since $f_1(1)=f_2(0)$.
\end{example}

\begin{theorem}\label{thr:WeakcvAlter}
Under Assumptions~\ref{as:WeakcvAlter} 
 and assuming that $n/(n+m)$ tends to $\sigma^2\in[0,1]$, we have
\begin{align*}
\sqrt{\tau_{n,m}}\paren{\FCPm-\wt{G}}\cvloi\sigma\mathbb{U}\circ \widetilde{G}+\sqrt{1-\sigma^2}\widetilde{G}'\mathbb{V}
\text{ on $D(0,1)$},
\end{align*}
where $\U$ and $\V$ are two independent Brownian bridges and $\widetilde{G}$ is defined in~\eqref{eq:Weakaltcdf}.
\end{theorem}
Theorem~\ref{thr:WeakcvAlter} is proved in Section~\ref{proof:WeakcvAlter} and extends Theorem~\ref{thr:cvAlter}. 
 The function $\widetilde{G}$ \eqref{eq:Weakaltcdf} is the generalisation of $G$ \eqref{eq:altcdf} when $\Fcal$ is not continuous and is equal to $G$ when $\Fcal$ is continuous.

The same kind of weakened assumption holds in the novelty detection setting. In this setting there are two different distribution for the theoretical $p$-values: the one under the null hypothesis (the test score has for c.d.f. $F_0$ under the null) and the one under the alternative (the test score has for c.d.f. $\Ftest$ under the alternative). 
 $\wt{G}$ given by \eqref{eq:Weakaltcdf} is the c.d.f. of the theoretical $p$-values under the alternative, and the c.d.f. of the theoretical $p$-value under the null is given by:
\begin{equation}\label{eq:WeakNullCDF}
\wt{G}_0(t)=1-F_0\circ\Fcalcag^{-1}(1-t),\ \forall t\in(0,1).
\end{equation} 
Since we want uniform theoretical $p$-values under the null, we will assume that $\wt{G}_0=I$. This is for example true when $\Pcal=P_0$  and $\Pcal$ is the distribution in Example~\ref{ex:Weakinterest}. We define the mixture c.d.f. ${\Flimcag}$:
\begin{equation}\label{eq:WeakmixtureLimit}
\Flimcag=\pi_0 \wt{G}_0+(1-\pi_0)\wt{G},
\end{equation}
and we state our new weakened assumption in the novelty detection setting.
\begin{assumption}
\label{as:WeakND}
The set of calibration scores $\paren{S_k}_{k\geq 1}$ and the set of test scores $\paren{T_i}_{i\geq 1}$ 
are two independent families of real random variables. The variables $S_k$, $k\geq 1$,  are i.i.d. with distribution $\Pcal$ and c.d.f. $\Fcal$.  The variables $T_i$, $i\geq 1$, are independent,  the variables $T_i$, $i\in\cH_0$, are identically distributed as a null score distribution $\Pcal$ and c.d.f. $\Fcal$, and the variables $T_i$, $i\in\cH_1$, are identically distributed as an alternative score distribution $\Ptest$ (potentially different from $P_0$) with c.d.f. $\Ftest$. Moreover, the function $\wt{G}$ \eqref{eq:Compositioncdfcag} is continuously differentiable on $(0,1)$, the function $\wt{G}_0$~\eqref{eq:WeakNullCDF} is equal to $I$ on $(0,1)$ and the function $\Flimcag$~\eqref{eq:WeakmixtureLimit} is concave.
\end{assumption}
 
Assumption~\ref{as:WeakND} is a generalisation of Assumptions~\ref{as:ND}, \ref{as:strictCroissance} and \ref{as:concavity}. More precisely, while the data share the same structure (independence and same distributions between groups), the assumptions are now on the marginal distribution of the theoretical $p$-values under the null and the alternative, and on the mixture distribution of the theoretical $p$-value in Efron's two groups model.  Example~\ref{ex:Weaktest} shows the generalisation given by Assumption~\ref{as:WeakND}. 

\begin{example}\label{ex:Weaktest}
Assume that $P_0=\Pcal$ is the uniform distribution on $(0,1)\cup(2,3)$. Let $\gamma>0$, we take $\Ptest$ a distribution with density $Z_\gamma\gamma\paren{ e^{\gamma x}\1{x\in(0,1)}+e^{\gamma (x-1)}\1{x\in(2,3)}}$ where $Z_\gamma=(e^{2\gamma}-1)^{-1}$ is a normalising constant. $\Pcal$ does not verify Assumption~\ref{as:strictCroissance}, but as shown in Example~\ref{ex:Weakinterest} $\Ftestcalcag$ is continuously differentiable. More precisely we get for all $u\in(0,1)$,
\begin{align*}
\Ftestcalcag(u)=Z_\gamma\paren{e^{2\gamma u}-1},
\end{align*}
hence for any $\pi_0\in(0,1)$, we get that $\Flimcag$ is concave. 
Furthermore, even if $\Fcal$ is constant on $(1,2)$, we still get that $\Fcal\circ\Fcalcag^{-1}(u)=u$ for all $u\in(0,1)$. So even if Assumption~\ref{as:strictCroissance} does not hold, Assumption~\ref{as:WeakND} holds hence Theorem~\ref{thr:WeakBH95} can be applied in this case.
\end{example}

\begin{theorem}\label{thr:WeakBH95}
Under Assumption~\ref{as:WeakND}, let us consider $\mathcal{R}_\alpha$ the BH procedure at level $\alpha$ applied to the conformal $p$-values \eqref{eq:IntroConfpvalues}, with a level $\alpha>[\Flimcag'(0^+)]^{-1}$. If $n/(n+m)\rightarrow \sigma^2\in[0,1]$ and $\pi_0(m)\rightarrow\pi_0\in(0,1)$, we have
\begin{align*}
\sqrt{\tau_{n,m}}\paren{\FDPm(\mathcal{R}_\alpha)-\pi_0\alpha}&\cvloi \mathcal{N}\paren{0,\alpha^2\pi_0\brac{\sigma^2+(1-\sigma^2)\pi_0}\frac{1-\T_\alpha}{\T_\alpha}};
\\
\sqrt{\tau_{n,m}}\paren{\TDPm\paren{\mathcal{R}_\alpha}-\widetilde{G}(\T^{*})}&\cvloi\mathcal{N}\paren{0,\frac{\Sigma_\alpha}{(\alpha^{-1}-\Flimcag'(\T_\alpha))^2}},
\end{align*}
with
$
\Sigma_\alpha={{\widetilde{G}'(\T_\alpha)^2\T_\alpha(1-\T_\alpha)\brac{\pi_0\sigma^2+(1-\sigma^2){\alpha^{-2}}}}}{}
+{\brac{\alpha^{-1}-\pi_0}^2\paren{1-\pi_0}^{-1}\widetilde{G}(\T_\alpha)(1-\widetilde{G}(\T_\alpha))\sigma^2}{},
$
and $\T_\alpha=\sup\set[0]{t\in(0,1),\ \Flimcag(t)\geq {t}{\alpha}^{-1}}.$
\end{theorem}

Theorem~\ref{thr:WeakBH95} is proved in Section~\ref{proof:WeakBH95} and is a generalisation of Theorem~\ref{thr:BH95}.

\subsection{Proof of Theorems~\ref{thr:WeakcvAlter} and \ref{thr:WeakcvAlter}}\label{sec:proof:weakerass}
The proofs of Theorems~\ref{thr:WeakcvAlter} and \ref{thr:WeakBH95} are consequences of the following lemma. 
\begin{lemma}\label{lemma:WeakUniformCal}
Let $S$ and $T$ be any independent real random variables. Let $w:\R\rightarrow\R^+$ be any weight function. 
Let $U\sim\mathcal{U}(0,1)$ a random variable indepent of $T$. We have the following equality in distribution:
\begin{equation*}
w(S)\1{S\geq T}\overset{\Loi}{=}\paren{w\circ\Fcalcag^{-1}}(U)\1{U\geq \Fcalcag(T)},
\end{equation*} 
and $\Fcalcag(T)$ has for c.d.f. $\Ftestcalcag$ \eqref{eq:Compositioncdfcag}.
\end{lemma}
\begin{proof}
First, we use that $S\overset{\Loi}{=}\Fcalcag^{-1}(U)$. Let $t\in\R$. We have that $\P(\Fcalcag^{-1}(U)<t)=\P(U<\Fcalcag(t))$ by Lemma 13 (3) from \cite{fromont2016family}. This implies that $\P(\Fcalcag^{-1}(U)<t)=\Fcalcag(t)$ since $\Fcalcag(t)\in[0,1]$ hence the c.d.f. of $\Fcalcag^{-1}(U)$ is the right continuous version of $\Fcalcag$: $\Fcal$. Since $S$ and $T$ are independent, and that $U$ and $T$ are also independent we have that $(S,T)\overset{\Loi}{=}(\Fcalcag^{-1}(U),T)$. Using the latter we obtain
\begin{align*}
w(S)\1{S\geq T}&\overset{\Loi}{=}w\paren{\Fcalcag^{-1}(U)}\1{\Fcalcag^{-1}(U)\geq T}\\
&=\paren{w\circ\Fcalcag^{-1}}(U)\1{U\geq \Fcalcag(T)},
\end{align*}
by using again Lemma 13 (3) from \cite{fromont2016family}.
\end{proof}
Thanks to this general lemma,  one can assume that all the calibration point are i.i.d. uniform on $(0,1)$, and to change $\Ftest$ by $\Ftestcalcag$ and $w$ by $w\circ\Fcalcag^{-1}$ (which is equal to $1$ in the unweighted case).

\subsubsection{Proof of Theorem~\ref{thr:WeakcvAlter}}\label{proof:WeakcvAlter}

Using Lemma~\ref{lemma:WeakUniformCal}, one can assume without any loss of generality that all the calibration point are i.i.d. uniform on $(0,1)$ and that all the test points are i.i.d. with c.d.f. $\Ftestcalcag$.
Since for all $t\in(0,1)$, $\Ftestcalcag(t)=1-\wt{G}(1-t)$ we have by Assumption~\ref{as:WeakcvAlter} that the data, $I$ (the new c.d.f. of the calibration points) and $\Ftestcalcag$ (the new c.d.f. of the test points) satisfy Assumption~\ref{as:CP} (since the scores are still i.i.d. within each groups and independent by Assumption~\ref{as:WeakcvAlter})  and Assumption~\ref{as:strictCroissance}. The result is then implied by Theorem~\ref{thr:cvAlter} since for all $t\in(0,1)$, $\wt{G}(t)=1-\Ftestcalcag(I^{-1}(1-t))$.

\subsubsection{Proof of Theorem~\ref{thr:WeakBH95}}\label{proof:WeakBH95}

Using Lemma~\ref{lemma:WeakUniformCal}, one can assume without any loss of generality that all the calibration point are i.i.d. uniform on $(0,1)$, that all the test points under the alternative are i.i.d. with c.d.f. $\Ftestcalcag$ and that all the test points under the null are i.i.d. with c.d.f. $\wt{G}_0$ which is equal to $I$ on $(0,1)$ by Assumption~\ref{as:WeakND}.
In these setting, we get by Assumption~\ref{as:WeakND} that Assumptions~\ref{as:ND}, \ref{as:strictCroissance} and \ref{as:concavity} are true with $\wt{G}_0=I$, i.e. the test points under the null have the same distribution that the calibration points.
The result is then implied by Theorem~\ref{thr:BH95} since for all $t\in(0,1)$, $\wt{G}(t)=1-\Ftestcalcag(I^{-1}(1-t))$ and ${\Flimcag}(t)=\pi_0 t+(1-\pi_0)\wt{G}(t)$.

\subsection{Result with weight}

When using a weight function, after applying Lemma~\ref{lemma:WeakUniformCal} one still needs to put assumptions on $w$ and $\Wcal$ (at least on a quantity wich will play a similar role) in order to use the asymptotics results necessary to the convergence of the $\FCP$, namely Lemma~\ref{lemma:DonskerWeight} and therein obtaining that the family $\wt{\mathcal{F}}=\set[0]{\wt{w}_t:x\in(0,1)\mapsto w\circ{\Fcalcag^{-1}(x)}\1{x\leq t}}$ is $\mathcal{U}(0,1)$-Glivenko-Cantelli and Donsker.
We introduce the new weighted calibration function:
\begin{equation*}\label{eq:WeakcadfcalWeight}
\Wcalcag:t\in[0,1]\mapsto \frac{\int_0^t w\circ\Fcalcag^{-1}(u)\dd u}{\int_0^1 w\circ\Fcalcag^{-1}(u)\dd u},
\end{equation*}
which corresponds to $\Wcal$~\eqref{eq:cdfcalWeight} with $w$ replaced by $w\circ\Fcalcag^{-1}$ and $\Pcal$ replaced by the uniform distribution over $(0,1)$. 
By definition, $\Wcalcag$ is differentiable on $(0,1)$ with derivative $w\circ\Fcalcag^{-1}$ which leads us to introduce the following assumption wich will replace Assumption~\ref{as:weight} in this relaxed setting. 

\begin{assumption}\label{as:Weakweight}
The weight function $w:\R\cup\set{+\infty}\rightarrow \R^+$  is measurable function.
Furthermore, the function $w\circ\Fcalcag$ is bounded and continuous on $(0,1)$ and positive on its support in the sense that there exist $0\leq a<b\leq 1$ such that for all $x\in(a,b)$, $w\circ\Fcalcag(x)>0$ and for all $x\in(0,1)\backslash{(a,b)}$, $w\circ\Fcalcag^{-1}(x)=0$.
\end{assumption}

Assumption~\ref{as:Weakweight} is still an assumption on the distribution of an asymptotic $p$-values, but it is less straightforward than Assumptions~\ref{as:WeakcvAlter} and \ref{as:WeakND}. To understand this, assume that the weight function $w$ is non-negative and bounded, and let $(S_k)_{k\geq1}$ be i.i.d. real random variables with uniform distribution on $(0,1)$ and let $S$ be an independent real random variable with uniform distribution $(0,1)$. Let $\Fcalcag$ be any left continuous with right limits c.d.f. and assume that $\E(w\circ\Fcalcag^{-1}(S_1))>0$. Then, by applying the law of large numbers we get, for any $w(+\infty)>0$,
\begin{equation*}
\frac{w(+\infty)+\sum_{k\in\range{n}}w\circ\Fcalcag^{-1}(S_k)\1{S_k\geq S}}{w(+\infty)+\sum_{k\in\range{n}}w\circ\Fcalcag^{-1}(S_k)}\rightarrow \frac{\Ec{w\circ\Fcalcag^{-1}(S_1)\1{S_1\geq S}}{S}}{\E\brac{w\circ\Fcalcag^{-1}(S_1)}}.
\end{equation*}
Since $S$ as an uniform distribution on $(0,1)$, Assumption~\ref{as:Weakweight} is an assumption on the c.d.f. of the limiting $p$-values when you test if a uniform random variable has the distibution induced by $w\circ\Fcalcag^{-1}$ on the uniform distribution on $(0,1)$. Assumption~\ref{as:Weakweight} states that the $p$-value ${\Ec{w\circ\Fcalcag^{-1}(S_1)\1{S_1\geq S}}{S}}/{\E\brac{w\circ\Fcalcag^{-1}(S_1)}}$ has a density which is positive on its support. Example~\ref{ex:WeakinterestWeight} shows the generalisation given by Assumption~\ref{as:Weakweight}. 

\begin{example}\label{ex:WeakinterestWeight}
Assume the setting of Example~\ref{ex:Weakinterest}: $\Pcal$ is the uniform distribution on $(0,1)\cup(2,3)$ and $\Ptest$ has for density $f_1(x)\1{x\in(0,1)}+f_2(x-2)\1{x\in(2,3)}$ with $f_1$, $f_2$ two continuous function from $[0,1]$ to $\R^+$ with $f_1(1)=f_2(0)$. Those two distributions satisfy Assumption~\ref{as:WeakcvAlter}.
We define the weight function $w:x\in(0,3)\mapsto w_1(x)\1{x\in(0,1)}+w_2(x-2)\1{x\in(2,3)}$ with $w_1$, $w_2$ two bounded continuous functions from $[0,1]$ to $\R^+$ with $w_1(1)=w_2(0)$ and with a couple $(a,b)\in(0,1)^2$ such that if $u\in(0,1)$ $w_1(u)=0$ if and only if $u\geq a$, and $w_2(u)=0$ if and only $u\geq b$. Denoting for all $t\in[0,1]$ $W_1(t)=\int_0^t w_1(u)\dd u$, $W_2(t)=\int_0^t w_2(u)\dd u$ and the normalising constant $Z_w=W_1(1)+W_2(1)$ we get for all $u\in(0,1)$,
\begin{equation}
\Wcalcag(u)=Z_w^{-1}\paren{W_1(2u)\1{u\in[0,1/2)}+\brac{W_1(1)+W_2(2u-1)}\1{u\in[1/2,1]}},
\end{equation}
which is differentiable with derivative $w\circ\Fcalcag^{-1}$ equals to $2Z_w^{-1}w_1(2u)$ for all $u\in(0,1/2)$ and to $2Z_w^{-1}w_2(2u-1)$ for all $u\in[1/2,1)$ which is continuous on $(0,1)$, bounded, and is positive on its support $(a/2,(b+1)/2)$ hence Assumption~\ref{as:Weakweight} is satified.
\end{example} 

Now we define the new asymptotic mean,
\begin{equation}\label{eq:WeakaltcdfWeight}
\widetilde{G^w}(t)\coloneqq 1-\Ftestcalcag\circ(\Wcalcag)^{-1}(1-t),\  t\in(0,1),
\end{equation}
the c.d.f. of the theoretical $p$-values $p^{w,(+\infty)}=1-\Wcalcag(T)$.
We introduce the new quantities involved in the variance:
\begin{align}
\Vwcag(t)&\coloneqq \frac{\int_{0}^t w(\Fcalcag^{-1}(u))^2\dd u}{\int_{0}^1 w(\Fcalcag^{-1}(u))^2\dd u},\  t\in[0,1];\label{eq:Weakvariancecdf}\\
\widetilde{I^w}(t)&\coloneqq 1-\Vwcag\circ{(\Wcalcag)^{-1}}(1-t),\  t\in(0,1)\label{eq:WeakvarianceIdentity},
\end{align}
which are the quantities replacing $V^w_{\tiny\mbox{cal}}$~\eqref{eq:variancecdf} and $I^w$~\eqref{eq:varianceIdentity} in Lemma~\ref{lemma:DonskerWeight} and Proposition~\ref{prop:BBCoupleweight} with the parameters $(\Fcal, \Ftest, w)$ equal to $(I, \Ftestcalcag, w\circ\Fcalcag^{-1})$.

\begin{theorem}\label{thr:WeakcvAlterWeight}
Under Assumptions~\ref{as:WeakcvAlter} and \ref{as:Weakweight} 
 and assuming that $n/(n+m)$ tends to $\sigma^2\in[0,1]$, we have
\begin{equation*}
\sqrt{\tau_{n,m}}\paren{\FCPm-\wt{G^w}}\cvloi\sigma\U\circ \wt{G^w}+\sqrt{1-\sigma^2}\rw \paren{\wt{G^w}}'\paren{\mathbb{V}\paren{\wt{I^w}}+\brac{I-\wt{I^w}}N} \text{ on $D(0,1)$},
\end{equation*}
where $\U$, $\V$ are two independent standard Brownian bridges and $N$ is an independent standard Gaussian random variable.
\end{theorem}
Theorem~\ref{thr:WeakcvAlterWeight} is a corrolary of  Theorem~\ref{thr:cvAlter} after using the general standardisation Lemma~\ref{lemma:WeakUniformCal}.

\end{appendix}

\end{document}